\theoremstyle{thmstyleone}%
\newtheorem{theorem}{Theorem}
\newtheorem{proposition}[theorem]{Proposition}%
\theoremstyle{thmstyletwo}%
\newtheorem{remark}{Remark}%
\theoremstyle{thmstylethree}%
\newtheorem{definition}{Definition}%
\newtheorem{assumption}{Assumption}
\newtheorem{lemma}{Lemma}
\begin{document}

\title[Article Title]{GLL-type Nonmonotone Descent Methods Revisited under Kurdyka-{\L}ojasiewicz Property}


\author[1]{\fnm{Yitian} \sur{Qian}}\email{yitian.qian@polyu.edu.hk}

\author[2]{\fnm{Ting} \sur{Tao}}\email{taoting@fosu.edu.cn}

\author*[3]{\fnm{Shaohua} \sur{Pan}}\email{shhpan@scut.edu.cn}

\author[1,4]{\fnm{Houduo} \sur{Qi}}\email{houduo.qi@polyu.edu.hk}

\affil[1]{\orgdiv{Department of Data Science and Artificial Intelligence}, \orgname{The Hong Kong Polytechnic University}, \orgaddress{\city{Hong Kong}, \country{China}}}

\affil[2]{\orgdiv{School of Mathematics}, \orgname{Foshan University}, \orgaddress{\city{Foshan}, \country{China}}}

\affil*[3]{\orgdiv{School of Mathematics}, \orgname{South China University of Technology}, \orgaddress{\city{Guangzhou}, \country{China}}}

\affil[4]{\orgdiv{Department of Applied Mathematics}, \orgname{The Hong Kong Polytechnic University}, \orgaddress{\city{Hong Kong}, \country{China}}}


\abstract{The purpose of this paper is to extend the full convergence results of the classic GLL-type (Grippo-Lampariello-Lucidi) nonmonotone methods to nonconvex and nonsmooth optimization.
We propose a novel iterative framework for the minimization of a proper and lower semicontinuous function $\Phi$. The framework consists of the GLL-type 
nonmonotone decrease condition for a sequence, a relative error condition for its augmented sequence with respect to a Kurdyka-{\L}ojasiewicz (KL) function $\Theta$, and a relative gap condition for the partial maximum objective value sequence. The last condition is shown to be a product of the prox-regularity of $\Phi$ on the set of cluster points, and to hold automatically under a mild condition on the objective value sequence. We prove that for any sequence and its bounded augmented sequence together falling within the framework, the sequence itself is convergent.
Furthermore, when $\Theta$ is a KL function of exponent $\theta\in(0, 1)$, the convergence admits a linear rate if $\theta\in(0, 1/2]$ and a sublinear rate if $\theta\in(1/2, 1)$. As applications, we prove, for the first time, that the two existing algorithms, namely the nonmonotone proximal gradient (NPG) method with majorization and NPG with extrapolation both enjoy the full convergence of the iterate sequences for nonconvex and nonsmooth KL composite optimization problems.}

\keywords{nonconvex and nonsmooth optimization, GLL-type nonmonotone descent methods, full convergence, KL property, DC programs.}


\pacs[MSC Classification]{90C26, 65K05, 49M27}

\maketitle

\section{Introduction}\label{sec1.0}

It has been widely acknowledged that nonmonotone line search 
methods with theoretical convergence guarantee was initially studied by Grippo, Lampariello and Lucidi \cite{Grippo86} (referred to as GLL-type)
for differentiable optimization in the context of globalizing Newton's method.
One of the other earliest works is \cite{chamberlain1982watchdog}.
Since then, methods with nonmonotone line search strategies have become an important part in popular textbooks on numerical optimization, see, e.g., \cite{nocedal1999numerical,conn2000trust}.
Such methods have been developed and extensively used in modern applications from machine learning, 
which include those formulated as sparse optimization using $\ell_0$-norm. 
We refer to the paper \cite{Wright09} by Wright et.al. and many papers that cited it for 
a wide range of applications and new methods.
However, convergence theory is lacking for the fast-growing development in GLL-type methods
for nonconvex and nonsmooth optimization.
This issue has recently been tackled by Qian and Pan \cite{QianPan23} essentially for the weakly convex case,
which severely limits the applications of the obtained results to many optimization problems.
The purpose of this paper is to revisit the GLL-type methods and report a set of general convergence
results that significantly improve those in existing literature as we briefly review below.

\subsection{Related work}\label{sec1.1}

Let $\mathbb{X}$ represent a finite dimensional real vector space endowed with the inner product $\langle \cdot,\cdot\rangle$ and its induced norm $\|\cdot\|$.
Given a proper and lower semicontinuous (lsc) function $\Phi\!:\mathbb{X}\to\overline{\mathbb{R}}:=(-\infty,\infty]$ that is bounded from below,
we consider the minimization problem 
\begin{equation}\label{abstract-prob}
 \min_{x\in\mathbb{X}}\; \Phi(x).
\end{equation} 
Suppose $x^k$ is a current iterate. A GLL-type method generates the next iterate $x^{k+1}$ by
satisfying the growth condition:
\begin{equation} \label{GLL-Condition}
\Phi(x^{k+1} ) \le \max_{j \in \{ [k-m]_+, \ldots, k\}} \Phi(x^j) -\sigma (\| x^{k+1} - x^k \|) ,
\end{equation} 
where $m \ge 0$ is given integer, $[\tau]_+ := \max\{0, \tau\}$ for a given number $\tau$ 
(``$:=$'' means ``define''), and $\sigma(\cdot)\!: \mathbb{R}_+ \to \mathbb{R}_+$ is known as
the forcing function such that for a positive sequence $\{t_k\}_{k\in\mathbb{N}}$, if $\sigma(t_k) \rightarrow 0$ we must have
$t_k \rightarrow 0$ \cite{Grippo02}. A typical choice is $\sigma(t) = \gamma t^2/2$ for some $\gamma >0$.
In most cases, $x^{k+1} = x^k + \alpha_k d^k$ with $d^k$ being a search direction and $\alpha_k >0$ being 
a steplength chosen to satisfy \eqref{GLL-Condition}.
This is the reason why we call it a GLL-type line search method. It is important to note that $x^{k+1}$ may take other formats through, say, a trust-region technique \cite{conn2000trust}. Hence, GLL-type \eqref{GLL-Condition} is suitable for many algorithmic frameworks. For example, 
the GLL-type nonmonotone line search strategy has been integrated into gradient-type methods for smooth optimization (see, e.g., \cite{Raydan97,Birgin00,Grippo02}), proximal gradient (PG) methods for composite optimization  \cite{Wright09,LuZhang12,Kanzow22}, and difference-of-convexity (DC) algorithms for DC programs \cite{LiuPong19,LuZhou19}. 

Some convergence results have been established for GLL-type methods. For instance, when $\Phi(x) = f(x) + g(x)$ with $f(\cdot)$ being continuously differentiable and $g(\cdot)$ being convex, the GLL-type algorithm \texttt{SpaRSA} in \cite{Wright09} generates a sequence whose accumulation points must be critical points \cite[Theorem~1]{Wright09}. 
This is not the type of convergence we are addressing in this paper. Instead, we focus on the (full) convergence of the whole iterates and, when this happens, its convergence rate. As we will see below, such results are few and far between in existing literature.

For smooth optimization problems, Grippo et al. \cite{Grippo86,Grippo89} achieved the convergence of the iterate sequence generated by the GLL-type by assuming that the number of stationary points is finite, which is very restricted and implies the isolatedness of stationary points.
Dai \cite{Dai02} later proved that the objective value sequence of any iterative method with the GLL-type is R-linearly convergent if its objective function is strongly convex. Full convergence seems very challenging beyond convexity and/or isolatedness of solutions. Recently, Qian and Pan \cite{QianPan23} proposed an iterative framework, comprising the GLL-type and a relative error condition, for the minimization of a proper lsc KL function, and proved that any bounded iterate sequence obeying this framework is convergent if the condition \cite[Eq.~(2.6)]{QianPan23} is met. Furthermore, this condition was shown to be sufficient and necessary for KL functions that are locally weakly convex on the set of critical points (see \cite[Proposition~2.5]{QianPan23}). 
This condition is automatically satisfied for Lasso  type problems with convex regularizers considered in
\cite{Wright09}. Consequently the \texttt{SpaRSA} algorithm has full convergence,
which is much stronger than the known convergence result for \texttt{SpaRSA}. This also partly explains 
its observed efficiency.
Unfortunately, only KL weakly convex functions with a restricted weakly convex parameter are known to satisfy 
the condition (see \cite[Lemma~2.6]{QianPan23}), and such KL functions 
are relatively rare in nonconvex optimization. 
 
Therefore, if we are to work with KL property, we must break the weak convexity in order to derive full convergence
for a wider class of problems. 
It turns out that the class of prox-regular functions bears what we would need.
The prox-regular function studied by Poliquin and Rockafellar \cite{Poliquin96} 
belongs to a rich class of 
(nonconvex) functions, covering the locally weakly convex function, the lower-$\mathcal{C}^2$ function, the strongly amenable function, and the sparsity-inducing function such as zero-norm, $\ell_p$-norm with $p\in\!(0,1)$, and rank function. The main purpose of this paper is to design an iterative framework of GLL-type methods for 
prox-regular KL optimization problems, with a view that the obtained full convergence results will apply to a wide class of nonconvex and nonsmooth optimization including DC programming.


\subsection{General iterative framework}\label{sec1.2}

In order to describe the forthcoming iterative framework, we need to make available more information the iterates carry.
We rely on two pieces of them. The first is about the information associated with the current iterate $x^k$ such as the last iterate $x^{k-1}$ in PG methods with extrapolation or the subgradient $\xi^{k}$ of some convex function at $x^k$ in DC algorithms. To represent this piece of information, we create a new variable $z =(x, y) \in \mathbb{Z} :=\mathbb{X} \times \mathbb{Y}$ with the first element being the original variable $x$ itself and the second element being $x^{k-1}$, $\xi^k$, or something else depending how the iterates were actually generated. Here $\mathbb{Y}$ is a finite dimensional space (e.g., the dual space of $\mathbb{X}$ 
if $\xi$ is used for $y$). For $z \in \mathbb{Z}$, we use  $\mathcal{P}_{\mathbb{X}}(z)=x$ to represent the projection of $z$ onto $\mathbb{X}$.

The second piece of information we use is about a merit function $\Theta\!:\mathbb{Z} \to \overline{\mathbb{R}}$, which is often an augmented function of $\Phi$ through a proximal term. One requirement is that $\Theta(\cdot)$ should become closer to $\Phi(\cdot)$ as the algorithm progresses. We now quantify those qualitative arguments. 
For convenience of our description, we let
\[
  \ell(k) := \mbox{the largest index in} \ \arg\max \left\{ \Phi(x^j) \ | \ j = [k-m]_+, \ldots, k    \right\} .
\]
This is in contrast to the usual setting that $\ell(k)$ could be any index that achieves the maximum of 
the function value $\Phi(x^j)$ over the last $(m+1)$ iterates, see \cite[Eq.~(32)]{Wright09}.
 
Suppose a GLL-type nonmonotone descent method for \eqref{abstract-prob} generates a sequence $\{x^k\}_{k\in\mathbb{N}}$ and its augmented sequence $\{z^k\}_{k\in\mathbb{N}}\subset\mathbb{Z}$. We assume that  they comply with the following conditions.

 \begin{description}
 \item [{\bf H1.}] $\Phi(x^{k+1})+a\|x^{k+1}\!-x^k\|^2\le\Phi(x^{\ell(k)})$ for some $a>0$ and for all $k \in \mathbb{N}$.

 \item[{\bf H2.}] For any convergent subsequence $\{x^{\ell(k_j)}\}$, 
 $\limsup\limits_{j\to\infty}\Phi(x^{\ell(k_j)})\!\le\!\Phi(\lim\limits_{j\to\infty}x^{\ell(k_j)})$.

 \item[{\bf H3.}] There exists a proper and lsc KL function $\Theta\!:\mathbb{Z}\to\overline{\mathbb{R}}$ such that $\Theta(z)\ge\Phi(\mathcal{P}_{\mathbb{X}}(z))$ for all $z\in Z^*$, where $Z^*$ denotes the set of cluster points of $\{z^k\}_{k\in\mathbb{N}}$, and 
 \begin{subnumcases}{}\label{Phi-Theta}
 \Phi(x^{k+1})\le\Theta(z^{k+1})\le\Phi(x^{\ell(k)})+\varsigma_k\ \ {\rm for}\ \varsigma_k\ge 0,\lim_{k\to\infty}\varsigma_k=0;\!\!\!\!\!\!\!\!\!\\
  \label{dist-Theta}
  {\rm dist}(0,\partial\Theta(z^k))\le b\|x^k\!-\!x^{k-1}\|\ \ {\rm for\ some}\ b>0.
  \end{subnumcases}
 \item[{\bf H4.}] There exist $\tau\in(0,1),\mu>0$ and $\mathbb{N}\ni\overline{k}>m$ such that for each $k\ge\overline{k}$ and all $i\in\mathbb{N}$ with $\ell(k-1)+1\le i\le\ell(k)\!-\!1$,
 \begin{equation*}
 \sqrt{\Phi(x^{\ell(k)})-\Phi(x^i)}\le \tau\sqrt{a}\,\|x^{i}\!-\!x^{i-1}\|+\mu\sum_{j=i+1}^{\ell(k)}\!\!\|x^j\!-\!x^{j-1}\|.
 \end{equation*} 
 \end{description} 
 
 Some brief comments are as follows. 
 (i) Condition H1 is the GLL search condition \eqref{GLL-Condition} with the forcing function being the quadartic function.
 (ii) Condition H2 is rather weak and does not require $\Phi$ itself to be continuous relative to its domain. 
 (iii) 
 Condition H3 requires the existence of a proper and lsc KL function $\Theta$ such that its function value sequence $\{\Theta(z^k)\}_{k\in\mathbb{N}}$ is bounded by the objective value sequence $\{\Phi(x^k)\}_{k\in\mathbb{N}}$, and the relative inexact optimality of every $z^k$ to $\min_{z\in\mathbb{Z}}\Theta(z)$ is measured by $\|x^{k}\!-\!x^{k-1}\|$. If the objective function $\Phi$ satisfies \eqref{dist-Theta}, it becomes a natural choice of $\Theta$. As will be demonstrated in Section \ref{sec4.1}, potential functions are often a candidate 
 for the function $\Theta$. 
 (iv) Condition H4 aims at restricting the gap of all $\Phi(x^i)$ for $i\in\mathbb{N}$ with $\ell(k\!-\!1)\!+1\le i\le\ell(k)\!-\!1$ from their maximum $\Phi(x^{\ell(k)})$. This is a special one for GLL-type nonmonotone descent methods. 
 For the monotone case, i.e., $m=0$, condition H4 automatically holds because there is no $i$ such that
  $\ell(k\!-\!1)\!+1\le i\le\ell(k)\!-\!1$.
 We also note that even for this case, the iterative framework H1-H4 is new and is different from the monotone one in \cite{Attouch13}. When $m>0$, the term $\Phi(x^{\ell(k)})-\Phi(x^i)$ in H4 is nonnegative under H1, since $\Phi(x^{\ell(k)})\ge\Phi(x^{\ell(i+m)})\ge\Phi(x^i)$ follows Lemma \ref{lemma1-Phi} (i) later and $k\le i+m$.
 
 \subsection{Main contributions}
 
 The rest of the paper is to assess the validity of the framework H1-H4. We address three key aspects. Firstly, we prove that any iterate sequence $\{x^k\}_{k\in\mathbb{N}}$ and its associated $\{z^k\}_{k\in\mathbb{N}}$ conforming to H1-H4 has full convergence and a linear rate under certain KL properties. We then address how condition H4 is satisfied by a large class of functions. Finally, we verify that actual algorithms can be designed to meet the requirements in H1-H4. We do so by studying two existing algorithms for nonconvex and nonsmooth composite problems, which have a wide range of applications in a host of fields such as statistics, machine learning, signal processing, and so on; see \cite{Fan01,LiuPong19,Sra12}. Since this paper focuses on the convergence analysis of GLL-type nonmonotone descent methods that have been applied numerically to a wide variety of problems, we do not include numerical tests for them. We summarize the main contributions below. 
 \begin{description}
 	
 \item[{\bf(1)}] For any sequences $\{x^k\}_{k\in\mathbb{N}}$ and $\{z^k\}_{k\in\mathbb{N}}$ conforming to H1-H4, we prove that $\{x^k\}_{k\in\mathbb{N}}$ is convergent as long as the sequence $\{z^k\}_{k\in\mathbb{N}}$ is bounded (Theorem~\ref{KL-converge}).
 Furthermore, if in addition $\Theta$ is a KL function of exponent $\theta\in(0, 1)$, the convergence admits a linear rate for $\theta\in(0, 1/2]$ and a sublinear rate for $\theta\in(1/2, 1)$ (Theorem~\ref{KL-rate}).

 \item[{\bf(2)}] While the condition H1-H3 can be directly controlled through algorithmic design,
 H4 appears a bit too technical and hard to materialize. 
 We prove that it can be satisfied under rather weak conditions.
 For example, if the difference of the consecutive  objective values can be upper bounded by $\|x^{k+1}\!-\!x^k\|^2$, condition H4 is proved to automatically hold (Lemma~\ref{sequence-cond}). 
 If $\Theta$ is allowed to be $\Phi$, H4 is proved to be a consequence of the prox-regularity of $\Phi$ on the set of cluster points of the sequence $\{x^k\}_{k\in\mathbb{N}}$ (Propositions~\ref{prox-regular} and \ref{prop-ass0}). 
 In view of the universality of prox-regular functions, the proposed iterative framework H1-H4 greatly 
 generalizes the one studied in \cite{QianPan23}, which consists of the condition H1 and H3(\ref{dist-Theta}). 
Due to condition H4, our approach for convergence analysis technique is completely different from that of \cite{QianPan23}; see Remark \ref{Remark-GLL}.

 \item[{\bf(3)}] 
To resolve the concern that conditions H1-H4 may potentially conflict with each other,
we show that two existing algorithms satisfy our conditions and hence establish their full convergence.
This is much stronger than the reported convergence results.
The first algorithm is the NPG$_{\rm major}$  studied in \cite{LiuPong19} for the nonconvex and nonsmooth composite problem \eqref{DC-prob} and the second is 
the PGnls studied in \cite{Kanzow22} for the composite problem \eqref{DC-prob} with $h\equiv 0$.
Moreover, the information needed for our convergence results is much weaker than that required by the
respective algorithms. 
For example, our PGenls neither requires the gradient of $f$ in \eqref{DC-prob} to be Lipschitz continuous nor involves its Lipschitz constant. Hence, it is applicable to more scenarios.


\end{description}
 
\subsection{Organization}
 
The paper is organized as follows. 
In next section, we collect some technical results with a detailed description of the notation used. In Section~\ref{sec3}, we study consequences of the conditions H1-H4 and eventually prove the full convergence and analyze its convergence rate of the generated sequence.
Section~\ref{sec4} achieves the full convergence of the iterate sequences produced by two existing algorithms: ${\rm NPG}_{\rm major}$ for the nonconvex and nonsmooth composite problem \eqref{DC-prob} in \cite{LiuPong19} and PGnls for \eqref{DC-prob} with $h\equiv 0$ in \cite{Kanzow22}. This result is new and enhances the existing convergence results for the two methods. Section~\ref{sec5} concludes the paper.


\section{Preliminaries} \label{Section-Preliminary} 

In this part, we first describe the notation used, followed by the definitions of KL property and prox-regular functions. 
We then present some technical lemmas for future use.

\subsection{Notation} 

The quantities and notation for describing H1 to H4 will be reserved for use throughout the paper and they are: $a,m,b,\tau,\mu,\overline{k},\mathcal{P}_{\mathbb{X}}$ and $Z^*$. We also use other (global) notations. 
For $x \in \mathbb{X}$ and $\epsilon>0$, $\mathbb{B}(x, \epsilon)$ is the $\epsilon$-ball centered at $x$.
For two nonnegative integers $k_1,k_2$, if $k_1\le k_2$, the notation $[k_1,k_2]$ denotes the nonnegative integer set $\{k_1,\ldots,k_2\}$, otherwise it is an empty set. For a real number $t$, the floor operator $\lfloor t \rfloor$ is the largest integer not greater than $t$, and $[t]_{+}$ means $\max\{0,t\}$. For a proper $h\!:\mathbb{X}\!\to\overline{\mathbb{R}}$, $\partial h(\overline{x})$ denotes its (limiting) subdifferential at $\overline{x}\in\mathbb{X}$, and $[\eta_1<h<\eta_2]$ for any $-\infty<\!\eta_1<\!\eta_2<\!\infty$ denotes the set $\{x\in\mathbb{X}\,|\,\eta_1<h(x)<\eta_2\}$. 

\subsection{KL-property and Prox-regular functions}

 First of all, we recall the definition of the KL property (with exponent $\theta\in[0,1)$) for an extended real-valued function $h\!:\mathbb{X}\to\overline{\mathbb{R}}$.
 
 \begin{definition} (\cite[Definition 3.1]{Attouch10}) \label{KL-def}
  For any given $\eta\in(0,\infty ]$, denote by $\Upsilon_{\!\eta}$ the family of continuous concave $\varphi\!: [0, \eta )\rightarrow\mathbb{R}_{+}$ that is continuously differentiable on $(0,\eta)$ with $\varphi'(s)>0$ for all $s\in(0,\eta)$ and $\varphi(0)=0$. A proper function $h\!:\mathbb{X}\to\overline{\mathbb{R}}$ is said to have the KL property at $\overline{x}\in{\rm dom}\,\partial h$ if there exist $\eta\in(0,\infty]$, a neighborhood $\mathcal{U}$ of $\overline{x}$, and a function $\varphi\in\Upsilon_{\!\eta}$ such that for all $x\in\mathcal{U}\cap\big[h(\overline{x})<h<h(\overline{x})+\eta\big]$,
 \[
  \varphi'(h(x)\!-\!h(\overline{x})){\rm dist}(0,\partial h(x))\ge 1.
 \]
 If $\varphi$ can be chosen to be $\varphi(t)=ct^{1-\theta}$ with $\theta\in[0,1)$
  for some $c>0$, then $h$ is said to have the KL property of exponent $\theta$
  at $\overline{x}$. If $h$ has the KL property (of exponent $\theta$) at every point of ${\rm dom}\,\partial h$, then it is called a KL function (of exponent $\theta$).
 \end{definition}

 According to \cite[Lemma 2.1]{Attouch10}, to prove that a proper and lsc function has the KL property (of exponent $\theta\in[0,1)$), it suffices to check if the property holds at its critical points. As discussed in \cite[Section 4]{Attouch10}, the KL property is ubiquitous and the functions definable in an o-minimal structure over the real field admit this property.

 The prox-regularity of an extended real-valued function will be used later. Here we recall its formal definition, and more discussions are seen in \cite[Chapter 13]{RW98}. 
\begin{definition}\label{prox-reg-def}
 (\cite[Definition 13.27]{RW98}) A function $h:\mathbb{X}\to\overline{\mathbb{R}}$ is prox-regular at $\overline{x}$ for $\overline{v}$ if $h$ is finite and locally lsc at $\overline{x}$ with $\overline{v}\in\partial h(\overline{x})$, and there exist $\varepsilon>0$ and $r\ge0$ such that, whenever $(x,v)\in\mathbb{B}((\overline{x},\overline{v}),\varepsilon)\cap{\rm gph}\,\partial h$ with $h(x)<h(\overline{x})+\varepsilon$, 
 \begin{equation*}
 h(x')\ge h(x)+\langle v,x'-x\rangle-({r}/{2})\|x'-x\|^2\quad\ \forall x'\in\mathbb{B}(\overline{x},\varepsilon).
 \end{equation*}
 When this holds for all $\overline{v}\in\partial h(\overline{x})$, $h$ is said to be prox-regular at $\overline{x}$.
 \end{definition}
  
\subsection{Technical lemmas}

 Next we provide three lemmas that are used for the subsequent analysis. Among others, the results of Lemma \ref{lemma1-Phi} are implied by the proof of \cite[Lemma 2.2]{QianPan23}, and the result of Lemma \ref{lemma2-Phi} is just for the proof of Proposition \ref{prop1-Phi}. 
\begin{lemma}\label{lemma2-sequence}
 (see \cite[Lemma 2.7]{QianPan23}) Let $\{\beta_{l}\}_{l\in\mathbb{N}}\subset\mathbb{R}_{+}$ be a nonincreasing sequence such that for all $l>\overline{l}$ with an $\overline{l}\in\mathbb{N}$, $\beta_{l}\le C\max\big\{l^{\frac{1-\theta}{1-2\theta}},(\beta_{l-m-1}\!-\!\beta_{l})^{\frac{1-\theta}{\theta}}\big\}$, where $\theta\in(1/2,1)$ and $C>0$ are the constants. Then there exists $\gamma_0>0$ such that for all $l>\overline{l}$, 
$\beta_{l}\le\max\big\{C,\gamma_0^{\frac{1-\theta}{1-2\theta}}\big\}\lfloor\frac{l-\overline{l}}{m+2}\rfloor^{\frac{1-\theta}{1-2\theta}}$. 
\end{lemma}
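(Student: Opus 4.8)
\emph{Proof proposal.} This is the familiar sublinear KL rate estimate, here with memory length $m+1$ and the (harmless) extra alternative $\beta_l\le Cl^{(1-\theta)/(1-2\theta)}$; the plan is to turn the recursion into a uniform one-step increase of the potential $l\mapsto\beta_l^{1-\alpha}$ and then telescope. Write $p:=\frac{1-\theta}{1-2\theta}<0$ and $\alpha:=\frac{\theta}{1-\theta}>1$, so that $p=\frac1{1-\alpha}$, $p(1-\alpha)=1$ and $\frac{1-\theta}{\theta}=\frac1\alpha$. If $\beta_{l^*}=0$ for some $l^*>\overline l$, then $\beta_l\equiv0$ for $l\ge l^*$ by monotonicity and the bound is trivial there, so I assume $\beta_l>0$ throughout. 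The hypothesis then reads: for every $l>\overline l$, either (a) $\beta_l\le Cl^{p}$, or, when (a) fails, the maximum must be attained by the second term and hence (b) $\beta_l^{\alpha}\le C^{\alpha}(\beta_{l-m-1}-\beta_l)$.

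\emph{Step 1 (one-step potential gain).} Since $1-\alpha<0$, the map $s\mapsto s^{1-\alpha}$ is convex and decreasing on $(0,\infty)$, so comparing it with its supporting line at $\beta_{l-m-1}$ at the point $\beta_l\le\beta_{l-m-1}$ yields $\beta_l^{1-\alpha}-\beta_{l-m-1}^{1-\alpha}\ge(\alpha-1)\beta_{l-m-1}^{-\alpha}(\beta_{l-m-1}-\beta_l)\ge0$. I then split on the ratio $\beta_l/\beta_{l-m-1}$: if $\beta_l\ge\tfrac12\beta_{l-m-1}$ and (b) holds at $l$, then $\beta_{l-m-1}^{-\alpha}\ge(2\beta_l)^{-\alpha}$, which combined with (b) gives $\beta_l^{1-\alpha}-\beta_{l-m-1}^{1-\alpha}\ge(\alpha-1)2^{-\alpha}C^{-\alpha}=:\nu_1>0$; if $\beta_l<\tfrac12\beta_{l-m-1}$, then already $\beta_l^{1-\alpha}>2^{\alpha-1}\beta_{l-m-1}^{1-\alpha}$, so $\beta_l^{1-\alpha}-\beta_{l-m-1}^{1-\alpha}\ge(2^{\alpha-1}-1)\beta_{l-m-1}^{1-\alpha}\ge(2^{\alpha-1}-1)\beta_{\overline l+1}^{1-\alpha}=:\nu_2>0$ by monotonicity. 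With $\nu:=\min\{\nu_1,\nu_2\}$ the conclusion of this step is: at every $l>\overline l$, \emph{either} $\beta_l^{1-\alpha}-\beta_{l-m-1}^{1-\alpha}\ge\nu$, \emph{or} alternative (a) holds at $l$ together with $\beta_l\ge\tfrac12\beta_{l-m-1}$ (the only remaining case of the (a)/(b)-and-ratio split).

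\emph{Step 2 (backward chaining by steps of $m+1$).} If $\beta_l\le Cl^{p}$, then since $\lfloor\tfrac{l-\overline l}{m+2}\rfloor\le l$ and $p<0$ we get $\beta_l\le C\lfloor\tfrac{l-\overline l}{m+2}\rfloor^{p}$ and are done; so assume $\beta_l>Cl^{p}$, which forces (b) at $l$ and excludes the second alternative of Step 1 at $l$, and assume $l>\overline l+m+1$ (for $\overline l<l\le\overline l+m+1$ the RHS is vacuous, as $\lfloor\tfrac{l-\overline l}{m+2}\rfloor=0$). Put $j_k:=l-k(m+1)$ and let $K\ge1$ be the least index with either $j_K\le\overline l$, or $j_K>\overline l$ and the second alternative of Step 1 holding at $j_K$. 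For $0\le k<K$ the first alternative of Step 1 holds at $j_k$, so telescoping gives $\beta_l^{1-\alpha}\ge K\nu+\beta_{j_K}^{1-\alpha}$. In the first stopping case, $j_{K-1}\le\overline l+m+1$ forces $K(m+1)\ge l-\overline l$, hence $\beta_l^{1-\alpha}\ge K\nu\ge\frac{\nu}{m+1}(l-\overline l)\ge\nu\lfloor\tfrac{l-\overline l}{m+2}\rfloor$. In the second case, $\beta_{j_K}\le Cj_K^{p}$ gives $\beta_{j_K}^{1-\alpha}\ge C^{1-\alpha}j_K$, and with $c':=\min\{\nu/(m+1),C^{1-\alpha}\}$ a one-line estimate gives $\beta_l^{1-\alpha}\ge K\nu+C^{1-\alpha}(l-K(m+1))\ge c'l\ge c'\lfloor\tfrac{l-\overline l}{m+2}\rfloor$. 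Raising to the power $p$ yields $\beta_l\le\gamma_1\lfloor\tfrac{l-\overline l}{m+2}\rfloor^{p}$ with $\gamma_1:=\max\{\nu^{p},(c')^{p}\}$; choosing $\gamma_0$ so that $\gamma_0^{p}\ge\gamma_1$ and combining with the case $\beta_l\le Cl^{p}$ gives $\beta_l\le\max\{C,\gamma_0^{p}\}\lfloor\tfrac{l-\overline l}{m+2}\rfloor^{p}$ for all $l>\overline l$, which is the claim.

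\emph{Main obstacle.} The real content is Step 1: producing a lower bound $\nu$ on the increment $\beta_l^{1-\alpha}-\beta_{l-m-1}^{1-\alpha}$ that is uniform in $l$ and does not degenerate as $\beta_l\downarrow0$, which forces the ratio dichotomy above. Once that is in hand, the lag-$(m+1)$ telescoping and the passage from $K\sim(l-\overline l)/(m+1)$ to the stated floor with denominator $m+2$ are purely mechanical bookkeeping.
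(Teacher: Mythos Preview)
The paper does not prove this lemma at all; it merely cites \cite[Lemma~2.7]{QianPan23}. So there is nothing in the paper to compare against, and your proposal supplies a genuine proof where the paper defers to the literature. Your argument follows the standard scheme for sublinear KL rates (convexity of $s\mapsto s^{1-\alpha}$, ratio dichotomy to obtain a uniform increment of the potential, then telescoping along the $(m+1)$-lagged chain), and is essentially correct.

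One small slip: in Step~1 your $\nu_2$ bound uses $\beta_{l-m-1}^{\,1-\alpha}\ge\beta_{\overline l+1}^{\,1-\alpha}$, which is equivalent to $\beta_{l-m-1}\le\beta_{\overline l+1}$ and hence requires $l-m-1\ge\overline l+1$. In Step~2, when you apply Step~1 at $j_{K-1}$ in the first stopping case ($j_K\le\overline l$), you have $j_{K-1}-m-1=j_K\le\overline l$, so this inequality can fail. The fix is trivial: replace $\beta_{\overline l+1}$ by $\beta_0$ (or any global upper bound for the sequence) in the definition of $\nu_2$, since $\beta_{l-m-1}\le\beta_0$ for every admissible index by monotonicity. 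With this adjustment the argument goes through unchanged.
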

 \begin{lemma}\label{lemma1-Phi}
 Let $\{x^k\}_{k\in\mathbb{N}}$ be a sequence satisfying H1. The following results hold. 
 \begin{description}
 \item [(i)] $\{\Phi(x^{\ell(k)})\}_{k\in\mathbb{N}}$ is nonincreasing, and convergent with limit denoted by $\varpi^*$. 

 \item [(ii)] $\lim_{k\to\infty}\|x^{\ell(k)}-x^{\ell(k)-1}\|=0$. 

 \item[(iii)] If there exists ${k}_0\in\mathbb{N}$ such that $\Phi(x^{\ell(k)})=\Phi(x^{\ell({k}_0)})$ for all $k\ge {k}_0$, then all $x^{k}$ with $k\ge {k}_0$ are the same. 
 \end{description}
 \end{lemma}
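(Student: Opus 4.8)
The plan is to derive all three statements from condition H1 and the definition of $\ell(\cdot)$ alone, by a GLL-style analysis of the sliding maximum $M_k:=\Phi(x^{\ell(k)})=\max_{j\in[[k-m]_+,\,k]}\Phi(x^j)$, proving the parts in the stated order since (ii) uses (i) and (iii) uses the equality case of H1. For part (i), the first observation is that every index $j$ with $[k+1-m]_+\le j\le k$ also lies in $[[k-m]_+,k]$, so $\Phi(x^j)\le M_k$ for all such $j$; combining this with H1, which gives $\Phi(x^{k+1})\le\Phi(x^{\ell(k)})-a\|x^{k+1}-x^k\|^2\le M_k$, one gets $M_{k+1}=\max_{j\in[[k+1-m]_+,\,k+1]}\Phi(x^j)\le M_k$. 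Hence $\{M_k\}_{k\in\mathbb{N}}$ is nonincreasing, and being bounded below by $\inf_{\mathbb{X}}\Phi>-\infty$ it converges; denote the limit by $\varpi^*$.

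For part (ii), I would, for all $k$ large enough that $\ell(k)\ge k-m\ge 1$, apply H1 at the index $\ell(k)-1$ to obtain
\[
a\|x^{\ell(k)}-x^{\ell(k)-1}\|^2\le\Phi(x^{\ell(\ell(k)-1)})-\Phi(x^{\ell(k)})=M_{\ell(k)-1}-M_k .
\]
Since $\ell(k)-1\ge k-m-1\to\infty$, part (i) gives $M_{\ell(k)-1}\to\varpi^*$, while also $M_k\to\varpi^*$; therefore the right-hand side tends to $0$, and so does $\|x^{\ell(k)}-x^{\ell(k)-1}\|$.

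For part (iii), assume $M_k=M_{k_0}=:\overline{M}$ for all $k\ge k_0$. The key step is that H1 forbids any movement once the new iterate already attains the running maximum: for any $k\ge k_0$ we have $\Phi(x^{\ell(k)})=\overline{M}$, so if in addition $\Phi(x^{k+1})=\overline{M}$ then $\overline{M}+a\|x^{k+1}-x^k\|^2\le\overline{M}$, forcing $x^{k+1}=x^k$ and hence $\Phi(x^k)=\overline{M}$. Now fix any $k\ge k_0+m+1$; then $\ell(k)\in[k-m,\,k]$ with $\Phi(x^{\ell(k)})=\overline{M}$ and $\ell(k)\ge k_0+1$, so applying this observation successively at the indices $\ell(k),\ell(k)-1,\dots,k_0+1$ propagates the equality backward and yields $x^{k_0}=x^{k_0+1}=\cdots=x^{\ell(k)}$; letting $k\to\infty$ (so that $\ell(k)\ge k-m\to\infty$) gives $x^i=x^{k_0}$ for all $i\ge k_0$. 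The computations in (i)--(ii) are just index bookkeeping; the only place needing an actual idea is (iii), namely extracting the equality case of H1 to trigger $x^{k+1}=x^k$ whenever $\Phi(x^{k+1})=\overline{M}$ and then running this backward along each block $[k_0,\ell(k)]$, the sole subtlety being to keep the downward induction among indices $\ge k_0$ where $M_k=\overline{M}$ is available.
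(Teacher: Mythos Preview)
Your proof is correct and follows the standard GLL-style argument. The paper itself does not give a proof of this lemma; it simply states that the results are implied by the proof of \cite[Lemma~2.2]{QianPan23}, and your argument is precisely the kind of elementary sliding-maximum analysis one expects there, so the approaches coincide.
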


Having established the convergence of the sequence $\{\Phi(x^{\ell(k)})\}$,
the next lemma says that for $j \ge 0$, under certain conditions, the function value sequence
$\{\Phi(x^{\ell(k)-j}) \}$ shares the limit of $\Phi(x^{\ell(k)})$.

 \begin{lemma}\label{lemma2-Phi}
  Let $\{x^k\}_{k\in\mathbb{N}}$ be a sequence satisfying H1-H2. For any given $j\in\mathbb{N}$, let $\mathcal{K}\subset\mathbb{N}$ be such that $\lim_{\mathcal{K}\ni k\to\infty}\!\Phi(x^{\ell(k)-j})\!=\liminf_{k\to\infty}\Phi(x^{\ell(k)-j})$. If there exists an index set $\mathcal{K}_1\subset \mathcal{K}$ such that $\lim_{\mathcal{K}_1\ni k\to\infty}x^{\ell(k)-j}=\lim_{\mathcal{K}_1\ni k\to\infty}x^{\ell(k)}$, then $\lim_{k\to\infty}\Phi(x^{\ell(k)-j})=\varpi^*$.
 \end{lemma}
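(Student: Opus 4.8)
The plan is to sandwich $\Phi(x^{\ell(k)-j})$ between two sequences that both converge to $\varpi^*$ (the limit of $\{\Phi(x^{\ell(k)})\}$ furnished by Lemma~\ref{lemma1-Phi}(i)), and then conclude. The upper estimate will use only the GLL mechanism (condition H1) together with Lemma~\ref{lemma1-Phi}(i); the lower estimate is the place where condition H2, the lower semicontinuity of $\Phi$, and the subsequential hypotheses on $\mathcal{K}$ and $\mathcal{K}_1$ enter.

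For the upper estimate, I would first note that $\ell(k)\ge[k-m]_+$, so $\ell(k)\to\infty$ and hence for all large $k$ the index $i:=\ell(k)-j$ is nonnegative. By the definition of $\ell(\cdot)$, the index $i$ belongs to the window $\{[(i+m)-m]_+,\ldots,i+m\}=\{i,\ldots,i+m\}$, whence $\Phi(x^{\ell(k)-j})=\Phi(x^{i})\le\Phi(x^{\ell(i+m)})$ — the same inequality invoked in the remark after H4. Since $i+m=\ell(k)-j+m\to\infty$ as $k\to\infty$, Lemma~\ref{lemma1-Phi}(i) gives $\Phi(x^{\ell(i+m)})\to\varpi^*$, and therefore $\limsup_{k\to\infty}\Phi(x^{\ell(k)-j})\le\varpi^*$.

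For the lower estimate, set $\overline{x}:=\lim_{\mathcal{K}_1\ni k\to\infty}x^{\ell(k)}=\lim_{\mathcal{K}_1\ni k\to\infty}x^{\ell(k)-j}$, the common limit supplied by the hypothesis. Applying H2 to the convergent subsequence $\{x^{\ell(k)}\}_{k\in\mathcal{K}_1}$ and using that $\{\Phi(x^{\ell(k)})\}$ converges to $\varpi^*$, I obtain $\varpi^*=\lim_{\mathcal{K}_1\ni k\to\infty}\Phi(x^{\ell(k)})\le\Phi(\overline{x})$. On the other hand, lower semicontinuity of $\Phi$ together with $x^{\ell(k)-j}\to\overline{x}$ along $\mathcal{K}_1$ gives $\Phi(\overline{x})\le\liminf_{\mathcal{K}_1\ni k\to\infty}\Phi(x^{\ell(k)-j})$; and since $\mathcal{K}_1\subset\mathcal{K}$ while $\{\Phi(x^{\ell(k)-j})\}_{k\in\mathcal{K}}$ converges to $\liminf_{k\to\infty}\Phi(x^{\ell(k)-j})$ by the choice of $\mathcal{K}$, the right-hand side equals $\liminf_{k\to\infty}\Phi(x^{\ell(k)-j})$. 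Chaining the two inequalities yields $\liminf_{k\to\infty}\Phi(x^{\ell(k)-j})\ge\varpi^*$.

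Combining the two estimates, $\varpi^*\le\liminf_{k\to\infty}\Phi(x^{\ell(k)-j})\le\limsup_{k\to\infty}\Phi(x^{\ell(k)-j})\le\varpi^*$, so the limit exists and equals $\varpi^*$, as claimed. I do not anticipate a genuine obstacle; the only point needing care is tracking that $\ell(k)\to\infty$, so that $\ell(k)-j$ is eventually a legitimate nonnegative index and $\Phi(x^{\ell(\ell(k)-j+m)})$ is indeed being driven to $\varpi^*$.
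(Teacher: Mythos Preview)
Your proof is correct and follows essentially the same sandwich argument as the paper: the lower estimate via lower semicontinuity and H2 is identical, and for the upper estimate the paper bounds $\Phi(x^{\ell(k)-j})$ by $\Phi(x^{\ell(\ell(k)-j-1)})$ via H1 while you bound it by $\Phi(x^{\ell(\ell(k)-j+m)})$ via the definition of $\ell$, a cosmetic difference since both indices tend to infinity and $\{\Phi(x^{\ell(\cdot)})\}$ converges to~$\varpi^*$.
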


 \begin{proof}
 The following inequality from H1 is very useful to the proof. For any nonnegative integers $t,\nu$ such that
 $\nu\le \ell(t)$, replacing $k$ in H1 by $(\ell(t)-\nu)$ results in
 \begin{equation} \label{H1-C}
 \Phi (x^{\ell(t)-\nu+1}) \le \Phi( x^{\ell( \ell(t) - \nu)}) \quad \mbox{for all} \ t, \nu\in\mathbb{N}\ \mbox{with} \
 \nu\le \ell(t).
\end{equation}  
Let $\widetilde{x}=\lim_{\mathcal{K}_1\ni k\to\infty}x^{\ell(k)}$. Using the lower semicontinuity (lsc)  of $\Phi$ and Lemma \ref{lemma1-Phi} (i) yields
 \begin{align*}
  \liminf_{k\to\infty}\Phi(x^{\ell(k)-j})
  &=\lim_{\mathcal{K}_1\ni k\to\infty}\Phi(x^{\ell(k)-j})
  \stackrel{\mbox{(lsc)}}{\ge}
  \Phi(\widetilde{x})\stackrel{\rm H2}{\ge}\limsup_{\mathcal{K}_1\ni k\to\infty}\Phi(x^{\ell(k)})=\varpi^*\\  &=\lim_{k\to\infty}\Phi(x^{\ell(k)})=\lim_{k\to\infty}\Phi(x^{\ell(\ell(k)-j-1)})
  \stackrel{\eqref{H1-C}}{\ge}\limsup_{ k\to\infty}\Phi(x^{\ell(k)-j}),
 \end{align*}
where the last inequality is due to \eqref{H1-C} for $t=k,\nu=j+1$. Hence, $\lim\limits_{ k\to\infty}\Phi(x^{\ell(k)-j})\!=\varpi^*$.  
 \end{proof}

 The following proposition improves the result of \cite[Lemma 2.2 (ii)]{QianPan23} by removing the conditions (2.1)-(2.2) there, as well as those of \cite[Lemma 4]{Wright09}, \cite[Lemma 3.1]{Hager11} and \cite[Proposition 4.1]{Kanzow22} by removing the continuity restriction on the function $\Phi$.
\begin{proposition}\label{prop1-Phi}
 Let $\{x^k\}_{k\in\mathbb{N}}$ be a bounded sequence satisfying conditions H1-H2. Then, it holds $\lim_{k\to\infty}\|x^{k+1}-x^k\|=0,\lim_{k\to\infty}\|x^k -x^{\ell(k)}\|=0$ and $\lim_{k\to\infty}\Phi(x^k)=\varpi^*$. 
 \end{proposition}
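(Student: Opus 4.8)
\smallskip\noindent\textbf{Proof proposal.}
The plan is to run the classical Grippo--Lampariello--Lucidi induction on the ``distance from the peak index'', but in a form that needs no continuity of $\Phi$ beyond H1--H2; the continuity that classical proofs (and \cite{Wright09,Hager11,Kanzow22}) rely on will be replaced by Lemma~\ref{lemma2-Phi}. I will use repeatedly that $\ell(k)\ge k-m\to\infty$, that $\{\Phi(x^{\ell(k)})\}$ is nonincreasing with limit $\varpi^*$ and $\|x^{\ell(k)}-x^{\ell(k)-1}\|\to0$ (Lemma~\ref{lemma1-Phi}), and that $\{\Phi(x^k)\}$ is bounded, since $\Phi$ is bounded below and H1 gives $\Phi(x^{k+1})\le\Phi(x^{\ell(k)})\le\Phi(x^0)$. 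The heart of the argument is the following claim, proved by induction on the integer $j\ge0$:
\[
  \lim_{k\to\infty}\Phi(x^{\ell(k)-j})=\varpi^*
  \qquad\text{and}\qquad
  \lim_{k\to\infty}\bigl\|x^{\ell(k)-j}-x^{\ell(k)-j-1}\bigr\|=0 .
\]
The base case $j=0$ is exactly Lemma~\ref{lemma1-Phi}(i)--(ii). For the inductive step, assume the claim for $0,\dots,j$. First, telescoping the consecutive differences gives $\|x^{\ell(k)-(j+1)}-x^{\ell(k)}\|\le\sum_{i=0}^{j}\|x^{\ell(k)-i}-x^{\ell(k)-i-1}\|\to0$; taking a subsequence realizing $\liminf_k\Phi(x^{\ell(k)-(j+1)})$ and refining it (by boundedness of $\{x^k\}$) to one along which $x^{\ell(k)-(j+1)}$ and hence $x^{\ell(k)}$ converge to a common point, Lemma~\ref{lemma2-Phi} applied with its index taken to be $j+1$ yields $\Phi(x^{\ell(k)-(j+1)})\to\varpi^*$. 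Then apply H1 with $k$ replaced by $\ell(k)-(j+2)$ (legitimate for all large $k$): $a\|x^{\ell(k)-(j+1)}-x^{\ell(k)-(j+2)}\|^2\le\Phi\bigl(x^{\ell(\ell(k)-(j+2))}\bigr)-\Phi\bigl(x^{\ell(k)-(j+1)}\bigr)$, and the right-hand side tends to $\varpi^*-\varpi^*=0$ because $\ell(k)-(j+2)\to\infty$, so that $\{\Phi(x^{\ell(\ell(k)-(j+2))})\}$ is a subsequence of the convergent sequence $\{\Phi(x^{\ell(i)})\}$. This closes the induction.

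Granted the claim, the three conclusions follow by elementary index bookkeeping. For $\|x^{k+1}-x^k\|\to0$: for each $k$ put $r:=\ell(k+m+1)-(k+1)\in\{0,\dots,m\}$, so that $x^{k+1}=x^{\ell(k+m+1)-r}$ and $x^{k}=x^{\ell(k+m+1)-r-1}$; telescoping and the claim bound both $\|x^{k+1}-x^{\ell(k+m+1)}\|$ and $\|x^{k}-x^{\ell(k+m+1)}\|$ by $\sum_{i=0}^{m}\|x^{\ell(k+m+1)-i}-x^{\ell(k+m+1)-i-1}\|\to0$, and the triangle inequality finishes it. Then $\|x^k-x^{\ell(k)}\|\le\sum_{j=\ell(k)}^{k-1}\|x^{j+1}-x^j\|\le m\max_{\ell(k)\le j<k}\|x^{j+1}-x^j\|\to0$. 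Finally, H1 gives $\limsup_k\Phi(x^k)\le\varpi^*$; conversely, every subsequence of $\{x^k\}$ has a sub-subsequence with $x^{k_i}\to\overline x$, along which $x^{\ell(k_i)}\to\overline x$ as well, so $\varpi^*=\lim_i\Phi(x^{\ell(k_i)})\le\Phi(\overline x)$ by H2 while $\Phi(\overline x)\le\liminf_i\Phi(x^{k_i})$ by lower semicontinuity, whence $\Phi(x^{k_i})\to\varpi^*$; since $\{\Phi(x^k)\}$ is bounded, this forces $\Phi(x^k)\to\varpi^*$.

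I expect the only genuinely delicate step to be the value half of the induction. Telescoping controls only $\|x^{\ell(k)-j}-x^{\ell(k)}\|$, and when $\Phi$ is merely lower semicontinuous one cannot pass from this to $\Phi(x^{\ell(k)-j})\to\varpi^*$ directly, as is done under a continuity hypothesis in the cited works; routing it through Lemma~\ref{lemma2-Phi}, that is, through a $\liminf$/compactness extraction together with H2, is what makes the argument go through. Everything else is routine manipulation of the peak index $\ell(\cdot)$.
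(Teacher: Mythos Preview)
Your proposal is correct and follows essentially the same route as the paper: an induction on the offset $j$ from the peak index $\ell(k)$, with the value half handled through a $\liminf$/compactness extraction fed into Lemma~\ref{lemma2-Phi} (this is precisely what replaces the continuity hypothesis used in \cite{Wright09,Hager11,Kanzow22}) and the distance half handled by H1, followed by the same index bookkeeping to recover the three conclusions. The only cosmetic differences are your indexing convention ($j\ge0$ versus the paper's $j\ge1$) and the order in which you establish the two halves within the inductive step; your ordering is arguably a bit cleaner, since the telescoping needed for the value half uses only the induction hypothesis rather than the just-proved distance limit.
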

 \begin{proof}
 We first prove by induction that for each $j\in\mathbb{N}$ the following limits hold: 
 \begin{equation}\label{aim-limit}
  \lim_{k\to\infty}\|x^{\ell(k)-j+1}-x^{\ell(k)-j}\|=0\ \ {\rm and}\ \ \lim_{ k\to\infty}\Phi(x^{\ell(k)-j})=\varpi^*.
 \end{equation}
 Indeed, when $j=1$, the first limit is due to Lemma \ref{lemma1-Phi} (ii). Let $\mathcal{K}\subset\mathbb{N}$ be an index set such that $\lim_{\mathcal{K}\ni k\to\infty}\!\Phi(x^{\ell(k)-1})\!=\liminf_{k\to\infty}\Phi(x^{\ell(k)-1})$. Note that $\{x^{\ell(k)-\!1}\}_{k\in\mathcal{K}}$ is bounded. There exists $\mathcal{K}_1\!\subset\mathcal{K}$ such that $\{x^{\ell(k)-1}\}_{k\in\mathcal{K}_1}$ is convergent. Along with Lemma \ref{lemma1-Phi} (ii), it holds $\lim_{\mathcal{K}_1\ni k\to\infty}x^{\ell(k)-1}=\lim_{\mathcal{K}_1\ni k\to\infty}x^{\ell(k)}$. Invoking Lemma \ref{lemma2-Phi} for $j=1$ leads to the second limit in \eqref{aim-limit}. 
 Now assume that for some $i\in\mathbb{N}$ the limits in \eqref{aim-limit} hold for all $j\in[1,i]$. We prove that the limits in \eqref{aim-limit} hold for $i+1$. Consider the sequence $\{ x^{\ell(k)-i-1}\}_{k\in\mathbb{N}}$ that is also bounded. We can get a convergent subsequence index ${\mathcal K}$ so that there exists $\widetilde{x}$ such that
\[
 \lim_{\mathcal{K}\ni k\to\infty} x^{\ell(k)-i-1}  = \widetilde{x}
 \quad \mbox{and} \quad
 \lim_{\mathcal{K}\ni k\to\infty}\Phi(x^{\ell(k)-i-1})=\liminf_{k\to\infty}\Phi(x^{\ell(k)-i-1}) .
\]
From condition H1 it follows that 
 \[
  \limsup_{k\to\infty}\|x^{\ell(k)-i}-x^{\ell(k)-i-1}\|^2\le a^{-1}\limsup_{k\to\infty}\big[\Phi(x^{\ell(\ell(k)-i-1)})-\Phi(x^{\ell(k)-i})]=0, 
 \]
 where the equality is by the second limit in \eqref{aim-limit} for $j=i$ and Lemma \ref{lemma1-Phi} (i). Thus, the first limit in \eqref{aim-limit} also holds for $i+1$. Since the first limit in \eqref{aim-limit} holds for all $j\in[1,i+1]$, it holds that
\begin{equation*}
 \lim_{\mathcal{K}\ni k\to\infty}\sum_{j=1}^{i+1}\|x^{\ell(k)-j+1} - x^{\ell(k) - j}\|=0.
\end{equation*}
 Together with $\lim_{\mathcal{K}\ni k\to\infty} x^{\ell(k)-i-1}  = \widetilde{x}$ and the following fact 
\begin{align*}
 x^{\ell(k)}=x^{\ell(k)-i-1}&+\big( x^{\ell(k)-i}\!-\!x^{\ell(k) - i - 1} \big) + \big( x^{\ell(k)-i+1}\!-\!x^{\ell(k)-i} \big)+\cdots+\big( x^{\ell(k)}\!-\! x^{\ell(k)-1} \big),
\end{align*}
we get $\lim_{\mathcal{K}\ni k\to\infty}x^{\ell(k)}=\widetilde{x}$. Invoking Lemma \ref{lemma2-Phi} for $j=i+1$ shows that the second limit in \eqref{aim-limit} holds for $j=i+1$. Then the limits in \eqref{aim-limit} hold for all $j\in\mathbb{N}$.  
 
Equipped with the limits in \eqref{aim-limit}, we reach the final part of our proof.
 For each $\mathbb{N}\ni k\ge m$, noting that $\ell(k)\in[k-m,k]$, we can write $k-m-1=\ell(k)-j$ for some $j\in[1,m+1]$. Using the limit in  \eqref{aim-limit} leads to $\lim_{k\to\infty}\|x^{k+1}-x^k\|=\lim_{k\to\infty}\|x^{k-m}-x^{k-m-1}\|=0$. Note that 
 \[
   x^{\ell(k)}=x^{k-m-1}+\sum_{j=1}^{\ell(k)-(k-m-1)}\!\!\big[x^{\ell(k)-j+1}-x^{\ell(k)-j}\big]\quad{\rm for\ each}\ k\ge m.
 \]
 Along with the first limit in \eqref{aim-limit}, we get $\lim_{k\to\infty}\|x^{\ell(k)}-x^{k-m-1}\|=0$, which along with $\lim_{k\to\infty}\|x^{k+1}-x^k\|=0$ implies $\lim_{k\to\infty}\|x^{\ell(k)}-x^{k}\|=0$. To prove $\lim_{k\to\infty}\Phi(x^k)=\varpi^*$, let $\mathcal{K}\subset\mathbb{N}$ be an index set such that $\lim_{\mathcal{K}\ni k\to\infty}\Phi(x^{k})=\liminf_{k\to\infty}\Phi(x^{k})$. Note that $\{x^{k}\}_{k\in\mathcal{K}}$ is bounded. There exists an index set $\mathcal{K}_1\subset\mathcal{K}$ such that $\{x^{k}\}_{k\in\mathcal{K}_1}$ is convergent with the limit, say, $\widetilde{x}$. Recall that $\lim_{k\to\infty}\|x^{\ell(k)}-x^{k}\|=0$, so $\lim_{\mathcal{K}_1\ni k\to\infty}x^{\ell(k)}=\widetilde{x}$. Thus, by leveraging condition H2, we obtain
 \begin{align*}
  \liminf_{k\to\infty}\Phi(x^{k})&=\lim_{\mathcal{K}_1\ni k\to\infty}\Phi(x^{k})\stackrel{\mbox{(lsc)}}{\ge}\Phi(\widetilde{x})\stackrel{\rm H2}{\ge}\lim_{\mathcal{K}_1\ni k\to\infty}\Phi(x^{\ell(k)})=\varpi^*\\
  &=\limsup\limits_{k\to\infty}\Phi(x^{\ell(k-1)})\stackrel{\rm H1}{\ge}\limsup\limits_{k\to\infty}\Phi(x^{k})\ge\liminf\limits_{k\to\infty}\Phi(x^{k}).
 \end{align*}
 Consequently, $\lim_{k\to\infty}\Phi(x^{k})=\varpi^*$. This finishes our proof.   
%
 \end{proof}

Given that $\Phi(\cdot)$ is assumed to be lsc, condition H2 is equivalent to saying that $\Phi(\cdot)$ is continuous along any convergent subsequence of the particular sequence $\{ x^{\ell(k)}\}$. This is certainly true when $\Phi(\cdot)$ is continuous. When it is not, algorithms need to make sure this continuity along the convergent subsequence of $\{ x^{\ell(k)}\}$. When this happens and the sequence $\{x^k\}_{k\in\mathbb{N}}$ is bounded, the whole function sequence $\{ \Phi(x^k)\}_{k \in \mathbb{N}}$ converges by Proposition \ref{prop1-Phi}. But we like to have the full convergence of the iterate sequence $\{x^k\}_{k \in \mathbb{N}}$ itself. This is the main task of the next section. Although we have proved the convergence of the sequence $\{\Phi(x^k)\}$,
in various places of proofs, we still use $\liminf \Phi(x^k)$ and $\limsup \Phi(x^k)$ on
some subsequence $k \in {\mathcal{K}}$ whenever the lower semicontinuity of $\Phi(\cdot)$ or
H2 is needed.  
 
\section{Full Convergence}\label{sec3}

This section focuses on the full convergence of any sequence $\{x^k\}_{k\in\mathbb{N}}$ if the sequence itself and its augmented sequence $\{z^k\}_{k\in\mathbb{N}}$ conform to H1-H4. To this end,  define
\begin{equation}\label{Xi-Gamk}
 \Xi_k:=\|x^{\ell(k)}-x^{\ell(k)-1}\|\ {\rm and}\  \Gamma_k:=\!\sqrt{\Phi(x^{\ell(k)})-\Phi(x^{\ell(k+1)})}\quad\forall k\in\mathbb{N}.
\end{equation}
Note that $\Gamma_k$ is well defined in view of Lemma~\ref{lemma1-Phi} (i). We will prove the convergence of the 
series $\sum_{k=1}^\infty\Xi_k$ and $\sum_{k=1}^\infty\Gamma_k$ and employ them to achieve the main result.
  
 \subsection{Full convergence of \texorpdfstring{$\{ x^k\}_{k \in \mathbb{N}}$}{xk sequence}} \label{sec3.1}

We already proved the convergence of the function value sequence $\{ \Phi(x^k)\}$ in Proposition~\ref{prop1-Phi}.
We now prove the same for the function value sequence of $\{ \Theta(z^k)\}$ and the convergence of the 
series $\sum_{k=1}^\infty\Xi_k$ and $\sum_{k=1}^\infty\Gamma_k$.
Recall $Z^*$ denotes the set of accumulation points of $\{z^k\}$ and
$\varpi^*$ denotes the limit of the sequence $\{ \Phi(x^k)\}$.

 \begin{proposition}\label{prop2-Phi}
 Let $\{x^k\}_{k\in\mathbb{N}}$ and $\{z^k\}_{k\in\mathbb{N}}$ be the sequences satisfying H1-H3. 
 If $\{z^k\}_{k\in\mathbb{N}}$ is bounded, 
 then the sequence $\{\Theta(z^k)\}_{k\in\mathbb{N}}$ is convergent with the limit $\varpi^*$, and moreover,
 \begin{description}
 \item[(i)] $Z^*$ is nonempty and compact, and $\Theta(z^*)=\varpi^*$ for any $z^*\in Z^*$;

 \item[(ii)] when $\Theta$ is allowed to be $\Phi$, then $Z^*\subset(\partial\Theta)^{-1}(0)=(\partial\Phi)^{-1}(0)$;

 \item[(iii)] $\sum_{k=1}^\infty\Xi_k<\infty$ and $\sum_{k=1}^\infty\Gamma_k<\infty$. 
 \end{description}
 \end{proposition}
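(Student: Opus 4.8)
The plan is to dispatch the three items in increasing order of difficulty, treating the finite-length claim~(iii) as the real content. Since $x^{k}=\mathcal{P}_{\mathbb{X}}(z^{k})$ and this projection is nonexpansive, boundedness of $\{z^{k}\}_{k\in\mathbb{N}}$ forces $\{x^{k}\}_{k\in\mathbb{N}}$ to be bounded, so Proposition~\ref{prop1-Phi} applies: $\|x^{k+1}-x^{k}\|\to0$, $\|x^{k}-x^{\ell(k)}\|\to0$, $\Phi(x^{k})\to\varpi^{*}$, and $\Phi(x^{\ell(k)})\to\varpi^{*}$ by Lemma~\ref{lemma1-Phi}(i). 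The claimed convergence $\Theta(z^{k})\to\varpi^{*}$ is then immediate from \eqref{Phi-Theta} written at index $k-1$, namely $\Phi(x^{k})\le\Theta(z^{k})\le\Phi(x^{\ell(k-1)})+\varsigma_{k-1}$, whose two ends both tend to $\varpi^{*}$ since $\varsigma_{k-1}\to0$.

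For~(i): $Z^{*}\neq\emptyset$ by Bolzano--Weierstrass, and $Z^{*}$ is compact as a closed subset of the compact set $\overline{\{z^{k}\}}$. Fix $z^{*}\in Z^{*}$, a subsequence $z^{k_{j}}\to z^{*}$, and put $x^{*}:=\mathcal{P}_{\mathbb{X}}(z^{*})$, so $x^{k_{j}}\to x^{*}$ and hence $x^{\ell(k_{j})}\to x^{*}$ because $\|x^{k_{j}}-x^{\ell(k_{j})}\|\to0$. Lower semicontinuity of $\Theta$ gives $\Theta(z^{*})\le\liminf_{j}\Theta(z^{k_{j}})=\varpi^{*}$; on the other hand H3 gives $\Theta(z^{*})\ge\Phi(x^{*})$, and H2 applied along $x^{\ell(k_{j})}\to x^{*}$ (with $\Phi(x^{\ell(k_{j})})\to\varpi^{*}$) gives $\Phi(x^{*})\ge\varpi^{*}$; combining, $\Theta(z^{*})=\varpi^{*}$, and also $\Phi(x^{*})=\varpi^{*}$ once this is matched with the lsc bound $\Phi(x^{*})\le\lim_{j}\Phi(x^{k_{j}})=\varpi^{*}$. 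For~(ii), when $\Theta$ may be taken as $\Phi$, \eqref{dist-Theta} together with $\|x^{k}-x^{k-1}\|\to0$ forces $\mathrm{dist}(0,\partial\Phi(x^{k}))\to0$; picking $v^{k}\in\partial\Phi(x^{k})$ with $\|v^{k}\|\to0$ and using the outer semicontinuity of the limiting subdifferential along the triple $(x^{k_{j}},\Phi(x^{k_{j}}),v^{k_{j}})\to(x^{*},\Phi(x^{*}),0)$ yields $0\in\partial\Phi(x^{*})$, i.e. $Z^{*}\subseteq(\partial\Phi)^{-1}(0)=(\partial\Theta)^{-1}(0)$.

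For~(iii): if $\Phi(x^{\ell(k_{0})})=\varpi^{*}$ for some $k_{0}$ then Lemma~\ref{lemma1-Phi}(iii) makes $\{x^{k}\}_{k\ge k_{0}}$ constant and both series terminate, so assume $\Phi(x^{\ell(k)})>\varpi^{*}$ for all $k$; then $\Theta(z^{\ell(k)})\ge\Phi(x^{\ell(k)})>\varpi^{*}$. Since $Z^{*}$ is compact, $\Theta\equiv\varpi^{*}$ on it by~(i), and $\Theta$ is a KL function, a standard uniformization argument (finitely covering $Z^{*}$ by the neighborhoods of Definition~\ref{KL-def}) yields one concave $\varphi\in\Upsilon_{\!\eta}$ and an $\epsilon>0$ with $\varphi'(\Theta(z)-\varpi^{*})\,\mathrm{dist}(0,\partial\Theta(z))\ge1$ whenever $\mathrm{dist}(z,Z^{*})<\epsilon$ and $\varpi^{*}<\Theta(z)<\varpi^{*}+\eta$. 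As $\mathrm{dist}(z^{\ell(k)},Z^{*})\to0$ and $\Theta(z^{\ell(k)})\to\varpi^{*}$, for large $k$ the point $z^{\ell(k)}$ lies in this set, so with \eqref{dist-Theta} (giving $\mathrm{dist}(0,\partial\Theta(z^{\ell(k)}))\le b\,\Xi_{k}$) we obtain $\varphi'(\Theta(z^{\ell(k)})-\varpi^{*})\ge 1/(b\,\Xi_{k})$. It remains to marry this with the descent in H1: rewriting H1 at index $\ell(k)-1$ gives $a\,\Xi_{k}^{2}\le\Phi(x^{\ell(\ell(k)-1)})-\Phi(x^{\ell(k)})$, which by the bookkeeping facts $\ell(k)-1\in[k-m-1,k-1]$ and ``each value of $\ell$ is attained at most $m+1$ times'' is bounded by a sum of at most $m+1$ consecutive decrements of the nonincreasing sequence $\{\Phi(x^{\ell(k)})\}$, so $\sum_{k}\Xi_{k}^{2}<\infty$; likewise $\sum_{k}\Gamma_{k}^{2}=\Phi(x^{\ell(1)})-\varpi^{*}<\infty$. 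Feeding $\varphi'(\Theta(z^{\ell(k)})-\varpi^{*})\ge 1/(b\,\Xi_{k})$ into the concavity inequality $\varphi(\Theta(z^{\ell(k)})-\varpi^{*})-\varphi(\Theta(z^{\ell(k+1)})-\varpi^{*})\ge\varphi'(\Theta(z^{\ell(k)})-\varpi^{*})\big(\Theta(z^{\ell(k)})-\Theta(z^{\ell(k+1)})\big)$ and splitting cross terms with $2\sqrt{uv}\le\lambda u+\lambda^{-1}v$, one telescopes to $\sum_{k}\Xi_{k}<\infty$; $\sum_{k}\Gamma_{k}<\infty$ then follows by an entirely analogous KL-telescoping argument applied to the decrements of $\{\Phi(x^{\ell(k)})\}$.

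The step I expect to be genuinely delicate is the telescoping in~(iii). Unlike the monotone Attouch--Bolte--Svaiter scheme, the sequence $\{\Theta(z^{\ell(k)})\}$ that drives the KL inequality is not known to be monotone, and it is tied to the honestly nonincreasing quantity $\{\Phi(x^{\ell(k)})\}$ only through the slack $\varsigma_{k}$, which is assumed merely to vanish, not to be summable. The art is therefore to choose the index pairing in H1 and in the KL step so that every occurrence of $\varsigma_{k}$ is trapped inside a telescoping difference, and so that every possibly-negative increment of $\{\Theta(z^{\ell(k)})\}$ is absorbed by the square-summable budget $\sum\Xi_{k}^{2}+\sum\Gamma_{k}^{2}<\infty$ already supplied by H1; making this matching come out right --- rather than invoking the KL inequality itself --- is the crux.
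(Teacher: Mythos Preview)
Your treatment of the preliminary convergence $\Theta(z^{k})\to\varpi^{*}$ and of items (i)--(ii) matches the paper's argument and is fine.

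The genuine gap is in (iii), and you have correctly located it yourself: you apply the concavity inequality to the sequence $\{\Theta(z^{\ell(k)})-\varpi^{*}\}$, which is not monotone, and then hope to absorb the bad increments using the square-summable budgets $\sum\Xi_{k}^{2}$ and $\sum\Gamma_{k}^{2}$. This does not close. Writing out the possibly-negative part gives
\[
\Theta(z^{\ell(k)})-\Theta(z^{\ell(k+1)})\ \ge\ \Phi(x^{\ell(k)})-\Phi(x^{\ell(\ell(k+1)-1)})-\varsigma_{\ell(k+1)-1},
\]
and while the first difference on the right is controlled by a finite window of $\Gamma_{j}^{2}$'s, the slack $\varsigma_{\ell(k+1)-1}$ is assumed only to tend to zero, not to be summable. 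There is no square-summable budget into which $\sum_{k}\varsigma_{\ell(k+1)-1}$ can be folded, so the telescoping you sketch cannot be completed on these terms.

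The paper avoids this entirely by a one-line switch that you are missing: since $\varphi'$ is nonincreasing and $\Theta(z^{\ell(k)})\ge\Phi(x^{\ell(k)})$ by \eqref{Phi-Theta}, the KL bound upgrades to
\[
1\le\varphi'\big(\Theta(z^{\ell(k)})-\varpi^{*}\big)\,b\,\Xi_{k}\le\varphi'\big(\Phi(x^{\ell(k)})-\varpi^{*}\big)\,b\,\Xi_{k}.
\]
Now the concavity inequality is applied to the \emph{monotone} sequence $\{\Phi(x^{\ell(k)})-\varpi^{*}\}$, so every increment is nonnegative and the standard $\sqrt{uv}\le\tfrac{1}{4}u+v$ splitting plus H1 (in the form $a\,\Xi_{k+m+1}^{2}\le\Phi(x^{\ell(k)})-\Phi(x^{\ell(k+m+1)})$) telescopes cleanly to $\sum\Xi_{k}<\infty$; the $\Gamma_{k}$ sum then follows from the same $\varphi'$ bound combined with $\Gamma_{k}^{2}=\Phi(x^{\ell(k)})-\Phi(x^{\ell(k+1)})$. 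The slacks $\varsigma_{k}$ never enter the recursion at all --- they were only needed to place $z^{\ell(k)}$ in the KL neighborhood. Replacing your $\Theta$-based concavity step by this $\Phi$-based one is the missing idea.
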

 \begin{proof}
 Since $\{x^k\}_{k\in\mathbb{N}}$ is the projection of $\{z^k\}_{k\in\mathbb{N}}$ onto $\mathbb{X}$, the boundedness of $\{z^k\}_{k\in\mathbb{N}}$ implies that of $\{x^k\}_{k\in\mathbb{N}}$, so the conclusion of Proposition~\ref{prop1-Phi} holds. 
 From $\lim_{k\to\infty}\varsigma_k=0$ and the convergence of $\{\Phi(x^k)\}$ by Proposition~\ref{prop1-Phi}, taking limits on both sides of the inequality \eqref{Phi-Theta} leads to 
 $
  \lim_{k \rightarrow \infty} \Theta(z^k) = \varpi^* .
 $ 
We now prove the remaining claims.
 
 {\bf(i)} 
The first part is obvious due to the boundedness of $\{z^k\}$. We prove the second part. Pick any $z^*\in Z^*$. There exists $\mathcal{K}\subset\mathbb{N}$ such that $\lim_{\mathcal{K}\ni k\to\infty}z^k=z^*$. 
It is apparent that $x^*:=\mathcal{P}_{\mathbb{X}}(z^*)=\lim_{\mathcal{K}\ni k\to\infty}x^k$. 
Proposition \ref{prop1-Phi} implies $x^{\ell(k)} \rightarrow x^*$ with $k \in \mathcal{K}$.
The lower semicontinuity of $\Phi$, and condition H2 leads to $\Phi(x^*)=\varpi^*$ because
\[
 \varpi^*=\liminf_{\mathcal{K}\ni k\to\infty}\Phi(x^k)
  \stackrel{\mbox{(lsc)}}{\ge}
  \Phi(x^*)
  \stackrel{\mbox{H2}}{\ge}
  \limsup_{\mathcal{K}\ni k\to\infty}\Phi(x^{\ell(k)})=\varpi^*. 
 \]
On one hand, using the fact $z^*\in Z^*$, the assumption on $\Theta(\cdot)$ in H3 implies
$\Theta(z^*)
\ge \Phi (  \mathcal{P}_{\mathbb{X}} ( z^*) )
= \Phi(x^*)=\varpi^*$. 
On the other hand, the lower semicontinuity of $\Theta(\cdot)$ implies
$\varpi^* = \liminf_{\mathcal{K}\ni k\to\infty}\Theta(z^k) \ge \Theta(z^*)$. Hence, we establish the equality $\Theta(z^*) = \varpi^*$.


 {\bf(ii)} Since $\Theta$ is allowed to be $\Phi$, it suffices to establish the first inclusion. 
 Now pick any $\widetilde{z}\in Z^*$. There exists an index set $\mathcal{K}\subset\mathbb{N}$ such that $\lim_{\mathcal{K}\ni k\to\infty}z^k=\widetilde{z}$. Since $\widetilde{z}\in Z^*$, from item (i), $\Theta(\widetilde{z})=\varpi^*$. Recall that $\lim_{k\to\infty}\Theta(z^k)=\varpi^*$. Then, $\lim_{\mathcal{K}\ni k\to\infty}\Theta(z^k)=\varpi^*=\Theta(\widetilde{z})$.
 From condition H3, for each $k\in\mathcal{K}$, there exists $w^k\in\partial\Theta(z^k)$ such that $\|w^k\|\le b\|x^k-x^{k-1}\|$. Passing the limit $\mathcal{K}\ni k\to\infty$ and using Proposition \ref{prop1-Phi} leads to $\lim_{\mathcal{K}\ni k\to\infty}w^k=0$. Combining $\lim_{\mathcal{K}\ni k\to\infty}\Theta(z^k)=\Theta(\widetilde{z})$ with the outer semicontinuity of $\partial\Theta$ with respect to $\Theta$-attentive convergence (see \cite[Proposition 8.7]{RW98}) then leads to $0\in\partial\Theta(\widetilde{z})$. By the arbitrariness of $\widetilde{z}\in Z^*$, the first inclusion follows.
 
 {\bf(iii)} 
 We first prove $\sum_{k=1}^\infty \Xi_k < \infty$.
 Recall that $\Theta$ is assumed to be a KL function. By invoking \cite[Lemma 6]{Bolte14} with $\Omega= Z^*$ and part (i), there exist $\delta>0,\,\eta>0$ and a function $\varphi\in\Upsilon_{\!\eta}$ such that for all $z\in[\varpi^*<\Theta<\varpi^*+\eta]\cap\big\{z\in\mathbb{Z}\,|\,{\rm dist}(z,Z^*)<\delta\big\}$,
 \[
   \varphi'(\Theta(z)-\varpi^*){\rm dist}(0,\partial\Theta(z))\ge 1.
 \]
 If there exists $\widehat{k}\in\mathbb{N}$ such that 	$\Phi(x^{\ell(\widehat{k})})=\varpi^*$, by Lemma \ref{lemma1-Phi} (i), we have $\Phi(x^{\ell(k)})=\varpi^*$ for all $k\ge\widehat{k}$, and the conclusion follows Lemma \ref{lemma1-Phi} (iii). 
Hence, it suffices to consider that $\Phi(x^{\ell(k)})\ne\varpi^*$ for each $k\in\mathbb{N}$. By \eqref{Phi-Theta} and Lemma \ref{lemma1-Phi} (i),
there exists $\widehat{k}\in\mathbb{N}$ such that 
\begin{align*}
& \Theta(z^{\ell(k)})\ge\Phi(x^{\ell(k)})>\varpi^* \quad \mbox{for all} \ k \in \mathbb{N}, \\
& \Theta(z^{\ell(k)})\le\Phi(x^{\ell(\ell(k)-1)})+\varsigma_{\ell(k)-1}<\varpi^*+\eta \quad \mbox{for all} \ k\ge\widehat{k}.
\end{align*} 
Together with $\lim_{k\to\infty}{\rm dist}(z^{\ell(k)}, Z^*)=0$, for each $k\ge\widehat{k}$ (if necessary by increasing $\widehat{k}$), $z^{\ell(k)}\in[\varpi^*<\Theta<\varpi^*+\eta]\cap\big\{z\in\mathbb{Z}\,|\,{\rm dist}(z,Z^*)<\delta\big\}$, so
 \begin{equation}\label{Phi-ineq0}
  1\le \varphi'(\Theta(z^{\ell(k)})-\varpi^*){\rm dist}(0,\partial\Theta(z^{\ell(k)}))\le b\,\Xi_k\varphi'\big(\Phi(x^{\ell(k)})-\varpi^*\big)
 \end{equation}
 where the second inequality is due to \eqref{dist-Theta} and the nonincreasing of $\varphi'$
 ($\varphi$ is concave). 
Combining \eqref{Phi-ineq0} with Lemma \ref{lemma1-Phi} (i) and the concavity of $\varphi$ on $[0,\eta)$, for all $k\ge\widehat{k}$, we have
 \begin{align*}
 \Phi(x^{\ell(k)})\!-\!\Phi(x^{\ell(k+m+1)})&\le
  b\,\Xi_k\varphi'\big(\Phi(x^{\ell(k)})-\varpi^*\big)[\Phi(x^{\ell(k)})\!-\!\Phi(x^{\ell(k+m+1)})]\\
 &\le b\,\Xi_k\big[\varphi\big(\Phi(x^{\ell(k)})\!-\!\varpi^*\big)
  \!-\!\varphi\big(\Phi(x^{\ell(k+m+1)})\!-\!\varpi^*\big)\big].
 \end{align*}
 On the other hand, since $\ell(k+m+1)\!-1\ge k$, we have $\Phi(x^{\ell(\ell(k+m+1)-1)})\le\Phi(x^{\ell(k)})$ by Lemma \ref{lemma1-Phi} (i), which along with condition H1 implies that 
\begin{equation}\label{philk-descent}
 \Phi(x^{\ell(k+m+1)})+a\|x^{\ell(k+m+1)}\!-\!x^{\ell(k+m+1)-1}\|^2\le\Phi(x^{\ell(k)})\quad\forall k\in\mathbb{N}.
\end{equation}
For every $k\ge\widehat{k}$, adding the above two inequalities together leads to
\begin{align*}
 \|x^{\ell(k+m+1)}\!-\!x^{\ell(k+m+1)-1}\|
&\le \sqrt{ba^{-1}\Xi_k[\varphi(\Phi(x^{\ell(k)})\!-\!\varpi^*)
	\!-\!\varphi(\Phi(x^{\ell(k+m+1)})\!-\!\varpi^*)]}\\
 &\le\frac{1}{4}\Xi_k+ba^{-1}[\varphi(\Phi(x^{\ell(k)})\!-\!\varpi^*)
	\!-\!\varphi(\Phi(x^{\ell(k+m+1)})\!-\!\varpi^*)],
\end{align*}
where the second inequality is due to $\sqrt{\alpha\beta}\le\frac{1}{4}\alpha+\beta$ 
for $\alpha,\beta\ge0$. For any $\nu>k\ge\widehat{k}$, summing the above inequality from $k$ to $\nu$ and using the expression of $\Xi_k$ yields
\begin{align*}
 \sum_{j=k+m+1}^{\nu+m+1}\Xi_j&\le\frac{1}{4}\sum_{j=k}^{\nu}\Xi_j+ba^{-1}\sum_{j=k}^{\nu}[\varphi(\Phi(x^{\ell(j)})\!-\!\varpi^*)
	\!-\!\varphi(\Phi(x^{\ell(j+m+1)})\!-\!\varpi^*)]\\
    &\le\frac{1}{4}\sum_{j=k}^{\nu}\Xi_j+ba^{-1}\sum_{j=k}^{k+m}\varphi(\Phi(x^{\ell(j)})\!-\!\varpi^*),
\end{align*}
where the second inequality is due to the nonnegativity of $\varphi$ on $[0,\eta)$. Note that $\sum_{j=k}^{\nu}\Xi_j\le\sum_{j=k}^{k+m}\Xi_j+\sum_{j=k+m+1}^{\nu+m+1}\Xi_j$. From the above inequality, for any $\nu>k\ge\widehat{k}$,
\begin{equation}\label{rate-ineq1}
 \frac{3}{4}\sum_{j=k+m+1}^{\nu+m+1}\,\Xi_j\le\frac{1}{4}\sum_{j=k}^{k+m}\,\Xi_j+ba^{-1}\sum_{j=k}^{k+m}
 \varphi\big(\Phi(x^{\ell(j)})\!-\!\varpi^*\big).
\end{equation}
 Passing the limit $\nu\to\infty$ to the above inequality results in $\sum_{k=1}^\infty\Xi_k<\infty$. 

Now we prove $\sum_{k=1}^\infty \Gamma_k < \infty$.
 For each $k\ge\widehat{k}$, from the concavity of $\varphi$ on $[0,\eta)$, it immediately follows that   
 \[
  \varphi\big(\Phi(x^{\ell(k)})-\varpi^*\big)
   \!-\!\varphi\big(\Phi(x^{\ell(k+1)})\!-\!\varpi^*\big)\ge \varphi'\big(\Phi(x^{\ell(k)})-\varpi^*\big)[\Phi(x^{\ell(k)})\!-\!\Phi(x^{\ell(k+1)})]
 \]
 which, by invoking the above \eqref{Phi-ineq0} and Lemma \ref{lemma1-Phi} (i), implies 
 \[
   \Phi(x^{\ell(k)})\!-\!\Phi(x^{\ell(k+1)})\le b\,\Xi_k\big[\varphi\big(\Phi(x^{\ell(k)})-\varpi^*\big)
   \!-\!\varphi\big(\Phi(x^{\ell(k+1)})\!-\!\varpi^*\big)\big].
 \]
 Together with the definition of $\Gamma_k$, it follows that for any $k\ge\widehat{k}$, 
 \begin{align*}
 \Gamma_k&\le \sqrt{b\,\Xi_k\big[\varphi\big(\Phi(x^{\ell(k)})-\varpi^*\big)\!-\!\varphi\big(\Phi(x^{\ell(k+1)})\!-\!\varpi^*\big)\big]}\\
  &\le\frac{1}{2}\Xi_k+\frac{b}{2}\big[\varphi\big(\Phi(x^{\ell(k)})-\varpi^*\big)\!-\!\varphi\big(\Phi(x^{\ell(k+1)})\!-\!\varpi^*\big)\big],\nonumber
 \end{align*}
which by the nonnegativity of $\varphi$ on $[0,\eta)$ implies that for any $\nu>k\ge\widehat{k}$,
\begin{equation}\label{KL-Gammak}
 \sum_{j=k}^{\nu}\Gamma_j\le\frac{1}{2}\sum_{j=k}^{\nu}\Xi_j+\frac{b}{2}\varphi\big(\Phi(x^{\ell(k)})-\varpi^*\big).
\end{equation}
 Passing the limit $\nu\to\infty$ and using $\sum_{k=1}^\infty\Xi_k<\infty$ leads to  $\sum_{k=1}^\infty\Gamma_k<\infty$.  
\end{proof}
 
 Next we bound the term $\sum_{j=\ell(k-1)}^{\ell(k)-1}\|x^{j+1}\!-\!x^{j}\|$.
\begin{proposition}\label{prop-H4}
 Let $\{x^k\}_{k\in\mathbb{N}}$ and $\{z^k\}_{k\in\mathbb{N}}$ be the sequences complying with H1 and H4. Then, for each $k\ge\overline{k}$, where $\overline{k}$ is the same as in condition H4, it holds
 \[
  \sum_{j=\ell(k-1)}^{\ell(k)-1}\!\!\|x^{j+1}-x^{j}\| \le c(\mu,\tau,a,m)\bigg[\sum_{j=k-m-1}^{k-1}\!\!\Gamma_j+\Xi_k\bigg],
 \]
 where $c(\mu,\tau,a,m)\!:=\!(m+1)(1\!+\overline{\mu})^{m-1}\max\Big\{\frac{1}{\sqrt{a}(1-\tau)},\frac{1}{(1+\overline{\mu})^{m-1}}+\overline{\mu}\Big\}$ with $\overline{\mu}=\!\frac{\mu}{\sqrt{a}(1-\tau)}$.	
\end{proposition}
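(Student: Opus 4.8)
The plan is to reindex the left‑hand side as $\sum_{i=\ell(k-1)+1}^{\ell(k)}\|x^{i}-x^{i-1}\|$, whose last summand is $\Xi_k$, and to bound each summand with $\ell(k-1)+1\le i\le\ell(k)-1$ separately. There are $p:=\ell(k)-\ell(k-1)-1$ such indices, and $p\le m$ because $\ell(k)\le k$ and $\ell(k-1)\ge k-1-m$; moreover each such $i$ satisfies $k-m\le i\le k-1$. If $\ell(k)\le\ell(k-1)+1$ (that is, $p\le 0$) the sum is at most $\Xi_k$ and the assertion holds since $c(\mu,\tau,a,m)\ge m+1\ge 1$; hence I assume $p\ge 1$.

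The crux is the monotonicity estimate: for every $i$ with $\ell(k-1)+1\le i\le\ell(k)-1$,
\[
 \Phi(x^{\ell(i-1)})\le\Phi(x^{\ell(k-m-1)}).
\]
Since $\Phi(x^{\ell(i-1)})=\max\{\Phi(x^{n})\,|\,[i-1-m]_{+}\le n\le i-1\}$ and the window $[[i-1-m]_{+},i-1]$ is contained in $[[k-2m-1]_{+},k-2]$ (because $i-1\ge\ell(k-1)\ge k-m-1$), it suffices to show $\Phi(x^{n})\le\Phi(x^{\ell(k-m-1)})$ whenever $[k-2m-1]_{+}\le n\le k-2$. If $n\le k-m-1$ this holds since $n$ lies in the window $[[k-2m-1]_{+},k-m-1]$ defining $\ell(k-m-1)$; if $n\ge k-m$, then H1 gives $\Phi(x^{n})\le\Phi(x^{\ell(n-1)})$, and $\ell(n-1)$ is either $\le k-m-1$ (whence it again belongs to the window of $\ell(k-m-1)$) or $\ge k-m$ (whence $\Phi(x^{\ell(n-1)})\le\Phi(x^{\ell(\ell(n-1))})\le\Phi(x^{\ell(k-m-1)})$ by Lemma~\ref{lemma1-Phi}(i)). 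Combining this with Lemma~\ref{lemma1-Phi}(i) and the telescoping $\Phi(x^{\ell(k-m-1)})-\Phi(x^{\ell(k)})=\sum_{t=k-m-1}^{k-1}\Gamma_t^{2}$, the quantity $E_i:=\big[\Phi(x^{\ell(i-1)})-\Phi(x^{\ell(k)})\big]_{+}$ obeys
\[
 \sqrt{E_i}\le\sqrt{\Phi(x^{\ell(k-m-1)})-\Phi(x^{\ell(k)})}=\Bigl(\textstyle\sum_{t=k-m-1}^{k-1}\Gamma_t^{2}\Bigr)^{1/2}\le\sum_{t=k-m-1}^{k-1}\Gamma_t=:G.
\]

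Now fix $i$ with $\ell(k-1)+1\le i\le\ell(k)-1$. From H1 at index $i-1$, $\sqrt{a}\,\|x^{i}-x^{i-1}\|\le\sqrt{\Phi(x^{\ell(i-1)})-\Phi(x^{i})}$; since $\Phi(x^{\ell(i-1)})-\Phi(x^{i})\le E_i+\big(\Phi(x^{\ell(k)})-\Phi(x^{i})\big)$ with both summands nonnegative ($\Phi(x^{i})\le\Phi(x^{\ell(k)})$ as $i$ lies in the window of $\ell(k)$), subadditivity of $\sqrt{\cdot}$ and then H4 applied to $\sqrt{\Phi(x^{\ell(k)})-\Phi(x^{i})}$ yield, with $\overline{\mu}=\mu/(\sqrt{a}(1-\tau))$,
\[
 \|x^{i}-x^{i-1}\|\le\frac{\sqrt{E_i}}{\sqrt{a}(1-\tau)}+\overline{\mu}\sum_{j=i+1}^{\ell(k)}\|x^{j}-x^{j-1}\|.
\]
Writing $W_i:=\sum_{j=i}^{\ell(k)}\|x^{j}-x^{j-1}\|$, so that $W_{\ell(k)}=\Xi_k$ and $W_{\ell(k-1)+1}$ is the sum to be bounded, this reads $W_i\le\tfrac{\sqrt{E_i}}{\sqrt{a}(1-\tau)}+(1+\overline{\mu})W_{i+1}$; unrolling over the $p$ indices $i=\ell(k-1)+1,\dots,\ell(k)-1$,
\[
 W_{\ell(k-1)+1}\le\frac{1}{\sqrt{a}(1-\tau)}\sum_{r=0}^{p-1}(1+\overline{\mu})^{r}\sqrt{E_{\ell(k-1)+1+r}}+(1+\overline{\mu})^{p}\Xi_k.
\]
Using $\sqrt{E_{\cdot}}\le G$, $\sum_{r=0}^{p-1}(1+\overline{\mu})^{r}\le p(1+\overline{\mu})^{p-1}\le m(1+\overline{\mu})^{m-1}$ and $(1+\overline{\mu})^{p}\le(1+\overline{\mu})^{m}$, the right‑hand side is at most $\tfrac{m(1+\overline{\mu})^{m-1}}{\sqrt{a}(1-\tau)}G+(1+\overline{\mu})^{m}\Xi_k$; here the coefficient of $G$ is $\le(m+1)(1+\overline{\mu})^{m-1}\cdot\tfrac{1}{\sqrt{a}(1-\tau)}$, and that of $\Xi_k$ is $\le(m+1)(1+\overline{\mu})^{m-1}\bigl(\tfrac{1}{(1+\overline{\mu})^{m-1}}+\overline{\mu}\bigr)$ because this reduces to the elementary bound $(1+\overline{\mu})^{m-1}(1-m\overline{\mu})\le m+1$ (trivial if $m\overline{\mu}\ge 1$; if $m\overline{\mu}<1$ and $m\ge 2$ use $(1+\tfrac1m)^{m-1}<e\le m+1$, and $m=1$ is direct; $m=0$ cannot occur here since $p\ge1$). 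Both coefficients being $\le c(\mu,\tau,a,m)$, the claim follows.

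The main obstacle I anticipate is the monotonicity estimate $\Phi(x^{\ell(i-1)})\le\Phi(x^{\ell(k-m-1)})$ together with the careful bookkeeping of window memberships in its case analysis; given it, the per‑index bound from H1 and H4 and the subsequent geometric‑type summation are routine.
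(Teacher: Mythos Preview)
Your proof is correct and follows essentially the same route as the paper: derive the per-index bound $\|x^i-x^{i-1}\|\le\frac{1}{\sqrt{a}(1-\tau)}G+\overline{\mu}\sum_{j=i+1}^{\ell(k)}\|x^j-x^{j-1}\|$ from H1 and H4, then unroll the resulting recursion (the paper carries two inductive formulas in parallel where you unroll via the tail sums $W_i$, but the content is identical). One simplification worth noting: the monotonicity estimate $\Phi(x^{\ell(i-1)})\le\Phi(x^{\ell(k-m-1)})$ that you flag as the main obstacle is immediate from Lemma~\ref{lemma1-Phi}(i), since $i-1\ge\ell(k-1)\ge k-m-1$ and $\{\Phi(x^{\ell(\cdot)})\}$ is nonincreasing, so your window case analysis is unnecessary.
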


\begin{proof}
We will use a few simple facts from the definition of $\ell(k)$. For each $i\ge 1$, we know
$
 \ell(i+m) \in \{ i, i+1, \ldots, i+m\},
$
achieving the largest function value $\Phi(x^j)$ over this index set.
Therefore, we must have $\Phi(x^i) \le \Phi(x^{\ell(i+m)})$ for any $i\ge 1$.
Now let $i\in[\ell(k\!-\!1)+1,\ell(k)\!-\!1]$, we have
$k\ge i-1\ge\ell(k-1)\ge k-m-1$ and hence $i+m \ge k$. By Lemma \ref{lemma1-Phi} (i), we have
\begin{equation} \label{Eq-I}
\Phi(x^{\ell(k)}) \le 	\Phi(x^{\ell(i-1) }) \le \Phi(x^{\ell(k-m-1) }) 
\ \mbox{and }\
\Phi(x^i) \le \Phi(x^{\ell(i+m)}) \le \Phi(x^{\ell(k)}) .
\end{equation} 
	
 Fix any $k\ge\overline{k}$. If $[\ell(k-1)+1,\ell(k)-1]=\emptyset$, the conclusion automatically holds. Hence, it suffices to consider that $[\ell(k-1)+1,\ell(k)-1]\ne\emptyset$. From conditions H1 and H4, for each $i\in[\ell(k-1)+1,\ell(k)-1]$, it holds  
 \begin{align}\label{xi-diff}
 \|x^{i}-x^{i-1}\|&\stackrel{{\rm H1}}{\le}\frac{1}{\sqrt{a}}\sqrt{\Phi(x^{\ell(i-1)})-\Phi(x^{\ell(k)})+\Phi(x^{\ell(k)})-\Phi(x^{i})}\nonumber\\
  & \stackrel{\eqref{Eq-I}}{\le}
  \frac{1}{\sqrt{a}}\left[\sqrt{\Phi(x^{\ell(k-m-1)})-\Phi(x^{\ell(k)})}+\sqrt{\Phi(x^{\ell(k)})-\Phi(x^{i})}\right]\nonumber\\
  &\le\frac{1}{\sqrt{a}}\bigg[\sum_{j=k-m-1}^{k-1}\!\!\Gamma_j+\sqrt{\Phi(x^{\ell(k)})-\Phi(x^{i})}\bigg]\nonumber\\
  &\stackrel{{\rm H4}}{\le} \frac{1}{\sqrt{a}}\!\!\sum_{j=k-m-1}^{k-1}\!\!\Gamma_j+\tau\|x^{i}-x^{i-1}\|+\frac{\mu}{\sqrt{a}}\sum_{j=i+1}^{\ell(k)}\|x^{j}-x^{j-1}\|.
 \end{align}
Recall that $\tau\in(0,1)$. From inequality \eqref{xi-diff}, for each $\nu\in[\ell(k\!-\!1)+1,\ell(k)\!-\!1]$, 
 \begin{equation}\label{term-ineq}
  \|x^{\nu}-x^{\nu-1}\|\le\frac{1}{\sqrt{a}(1\!-\!\tau)}\!\!\sum_{j=k-m-1}^{k-1}\!\!\Gamma_j+\frac{\mu}{\sqrt{a}(1\!-\!\tau)}\sum_{j=\nu+1}^{\ell(k)}\|x^{j}-x^{j-1}\|.
 \end{equation}
 The above \eqref{term-ineq}, along with $\overline{\mu}=\frac{\mu}{\sqrt{a}(1-\tau)}$, shows that for each $l\in[1,\ell(k)-\ell(k\!-\!1)\!-\!1]$, 
 \begin{equation}\label{recursion1}
  \|x^{\ell(k)-l}-x^{\ell(k)-l-1}\|\le\frac{1}{\sqrt{a}(1\!-\!\tau)}\!\!\sum_{j=k-m-1}^{k-1}\!\!\Gamma_j+\overline{\mu}\!\!\sum_{j=\ell(k)-l+1}^{\ell(k)}\!\!\|x^{j}-x^{j-1}\|.
 \end{equation}
 By leveraging the above \eqref{term-ineq} and \eqref{recursion1}, we claim that for all $i\in[1,\ell(k)-\ell(k\!-\!1)\!-\!1]$, 
 \begin{align}\label{aim-ineq1}
 \sum_{j=\ell(k)\!-i+1}^{\ell(k)}\!\!\|x^{j}\!-\!x^{j-1}\|\le \frac{(1+\overline{\mu})^{i-1}\!-\!1}{\overline{\mu}\sqrt{a}(1\!-\!\tau)}\!\!\sum_{j=k-m-1}^{k-1}\!\!\Gamma_j+(1+\overline{\mu})^{i-1}\|x^{\ell(k)}\!-\!x^{\ell(k)-1}\|,\!\!\\
 \label{aim-ineq2} 
 \|x^{\ell(k)-i}-\!x^{\ell(k)-i-1}\|
  \le\frac{(1\!+\!\overline{\mu})^{i-1}}{\sqrt{a}(1\!-\!\tau)}\!\!\sum_{j=k-m-1}^{k-1}\!\!\Gamma_j+(1\!+\!\overline{\mu})^{i-1}\overline{\mu}\|x^{\ell(k)}\!-\!x^{\ell(k)-1}\|.
 \end{align}
 Indeed, when $i=1$, inequality \eqref{aim-ineq1} is trivial, and \eqref{aim-ineq2} follows \eqref{term-ineq} with $\nu=\ell(k)-1$. 
 Now assuming that \eqref{aim-ineq1}-\eqref{aim-ineq2} hold for some $i\in[1,\ell(k)-\ell(k\!-\!1)\!-\!1]$, we prove them for $i+1\in[1,\ell(k)-\ell(k\!-\!1)\!-\!1]$. To this end, using the above \eqref{term-ineq} with $\nu=\ell(k)-i$ leads to
 \begin{align}\label{mid-ineq}
 \sum_{j=\ell(k)\!-i}^{\ell(k)}\|x^{j}-x^{j-1}\|&=\|x^{\ell(k)-i}\!-\!x^{\ell(k)-i-1}\|+\sum_{j=\ell(k)-i+1}^{\ell(k)}\|x^{j}-x^{j-1}\|\nonumber\\
 &\stackrel{\eqref{term-ineq}}{\le}\frac{1}{\sqrt{a}(1\!-\!\tau)}\!\!\sum_{j=k-m-1}^{k-1}\!\!\Gamma_j+(1+\overline{\mu})\!\!\sum_{j=\ell(k)-i+1}^{\ell(k)}\!\!\|x^{j}-x^{j-1}\|\nonumber\\
 &\le\frac{(1+\overline{\mu})^{i}\!-\!1}{\overline{\mu}\sqrt{a}(1\!-\!\tau)}\!\!\sum_{j=k-m-1}^{k-1}\!\!\Gamma_j+(1+\overline{\mu})^{i}\|x^{\ell(k)}-x^{\ell(k)-1}\|
 \end{align}
 where the second inequality is obtained by using \eqref{aim-ineq1}. Now invoking \eqref{recursion1} for $l=i+1$ leads to  
 \begin{align*}
  \|x^{\ell(k)-i-1}-x^{\ell(k)-i-2}\|
  &\le\frac{1}{\sqrt{a}(1\!-\!\tau)}\!\!\sum_{j=k-m-1}^{k-1}\!\!\Gamma_j+\overline{\mu}\!\!\sum_{j=\ell(k)-i}^{\ell(k)}\!\!\|x^{j}-x^{j-1}\|\\
  &\stackrel{\eqref{mid-ineq}}{\le}\frac{(1+\overline{\mu})^{i}}{\sqrt{a}(1\!-\!\tau)}\!\!\sum_{j=k-m-1}^{k-1}\!\!\Gamma_j+\overline{\mu}(1+\overline{\mu})^{i}\|x^{\ell(k)}-x^{\ell(k)-1}\|.
 \end{align*}
 The above two equations show that the claimed \eqref{aim-ineq1}-\eqref{aim-ineq2} hold for $i+1$. The above arguments by induction proves that \eqref{aim-ineq2} holds for all $i\in[1,\ell(k)-\ell(k\!-\!1)\!-\!1]$. Thus, together with the definition of $\Xi_k$ in \eqref{Xi-Gamk}, it follows
 \begin{align*}
 \sum_{j=\ell(k-1)}^{\ell(k)-1}\!\!\!\!\!\|x^{j+1}-x^{j}\|\!
  &=\Xi_k+\sum_{i=1}^{\ell(k)-\ell(k-1)-1}\!\!\|x^{\ell(k)-i}-x^{\ell(k)-i-1}\|\\
  &\le\Xi_k + (m+1){\max_{1\le i\le \ell(k)-\ell(k-1)-1}}\big\|x^{\ell(k)-i}-x^{\ell(k)-i-1}\big\|\\
 &\stackrel{\eqref{aim-ineq2}}{\le}\!\! \frac{(m\!+\!1)(1\!+\!\overline{\mu})^{m-1}}{\sqrt{a}(1\!-\!\tau)}\!\!\!\sum_{j=k-m-1}^{k-1}\!\!\!\!\!\Gamma_j\!+\!\big[1\!+\!(m\!+\!1)(1\!+\!\overline{\mu})^{m-1}\overline{\mu}\big]\Xi_k.
 \end{align*}
 This, by the definition of $c(\mu,\tau,a,m)$, implies the desired result.  
\end{proof}

By using Proposition~\ref{prop2-Phi} and Proposition~\ref{prop-H4}, we are ready to establish the main result. 
\begin{theorem}\label{KL-converge}
 Let $\{x^k\}_{k\in\mathbb{N}}$ and $\{z^k\}_{k\in\mathbb{N}}$ be the sequences satisfying H1-H4. If the sequence $\{z^k\}_{k\in\mathbb{N}}$ is bounded, then $\sum_{k=1}^\infty\|x^{k+1}\!-\!x^k\|<\infty$, and thereby the sequence $\{x^k\}_{k\in\mathbb{N}}$ is convergent with limit, say, $\overline{x}$.
\end{theorem}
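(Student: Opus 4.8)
The plan is to turn the per-block estimate of Proposition~\ref{prop-H4} into a global bound on $\sum_k\|x^{k+1}-x^k\|$ by summing over $k$ and using the summability of the two series $\sum_k\Xi_k$ and $\sum_k\Gamma_k$ established in Proposition~\ref{prop2-Phi}(iii). Since $\{z^k\}_{k\in\mathbb{N}}$ is bounded, $\{x^k\}_{k\in\mathbb{N}}$ is bounded as well, so Propositions~\ref{prop1-Phi}, \ref{prop2-Phi} and \ref{prop-H4} are all at our disposal.

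First I would record two elementary properties of the index map $k\mapsto\ell(k)$: it is nondecreasing, and $\ell(k)\to\infty$. The latter is immediate from $\ell(k)\ge k-m$. For the former, advancing the window from $[k-m,k]$ to $[k-m+1,k+1]$ either keeps the old largest maximizer of $\Phi(x^j)$ inside the window (so the new largest maximizer is at least as large an index, since $\ell(k)$ is the largest maximizer over $[k-m+1,k]$ and $x^{k+1}$ only adds a candidate with a strictly larger index) or drops it, in which case $\ell(k)=k-m$ and every index of the new window exceeds $\ell(k)$. Consequently the blocks $\{[\ell(k-1),\ell(k)-1]\}_{k\ge\overline{k}}$ are pairwise disjoint — some possibly empty, namely when $\Phi(x^{\ell(k)})$ stalls and $\ell(k)=\ell(k-1)$ — and, using $[\ell(k-1),\ell(k)-1]\cup[\ell(k),\ell(k+1)-1]=[\ell(k-1),\ell(k+1)-1]$ together with $\ell(N)\to\infty$, their union over $k\in\{\overline{k},\dots,N\}$ is exactly $\{j\in\mathbb{N}:\ell(\overline{k}-1)\le j\le\ell(N)-1\}$.

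Next I would sum the estimate of Proposition~\ref{prop-H4} over $k=\overline{k},\dots,N$. The left-hand sides telescope into $\sum_{j=\ell(\overline{k}-1)}^{\ell(N)-1}\|x^{j+1}-x^j\|$. On the right-hand side, in the double sum $\sum_{k=\overline{k}}^{N}\sum_{j=k-m-1}^{k-1}\Gamma_j$ each $\Gamma_j$ occurs for at most $m+1$ values of $k$, hence this is bounded by $(m+1)\sum_{j\ge 0}\Gamma_j<\infty$ by Proposition~\ref{prop2-Phi}(iii), while $\sum_{k=\overline{k}}^{N}\Xi_k\le\sum_{k\ge 1}\Xi_k<\infty$ for the same reason. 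Thus the partial sums $\sum_{j=\ell(\overline{k}-1)}^{\ell(N)-1}\|x^{j+1}-x^j\|$ are bounded uniformly in $N$; letting $N\to\infty$ gives $\sum_{j\ge\ell(\overline{k}-1)}\|x^{j+1}-x^j\|<\infty$, and adding the finite initial segment $\sum_{j=1}^{\ell(\overline{k}-1)-1}\|x^{j+1}-x^j\|$ yields $\sum_{k=1}^{\infty}\|x^{k+1}-x^k\|<\infty$. In particular $\{x^k\}_{k\in\mathbb{N}}$ is Cauchy in the finite-dimensional space $\mathbb{X}$, hence convergent to some $\overline{x}$.

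The only genuinely delicate point is the bookkeeping of the blocks: one must be certain that the intervals $[\ell(k-1),\ell(k)-1]$ neither overlap nor skip indices — which is precisely where monotonicity of $\ell$ enters — and that empty blocks cause no harm in the telescoping. Everything else is a routine reindexing of absolutely convergent series and a direct appeal to Propositions~\ref{prop2-Phi} and \ref{prop-H4}.
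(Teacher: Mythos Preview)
Your proposal is correct and follows essentially the same route as the paper: sum the per-block estimate of Proposition~\ref{prop-H4} over $k=\overline{k},\dots,\nu$, use that the blocks $[\ell(k-1),\ell(k)-1]$ tile $[\ell(\overline{k}-1),\ell(\nu)-1]$ so the left side telescopes, bound the double $\Gamma$-sum by $(m+1)\sum_j\Gamma_j$, and invoke Proposition~\ref{prop2-Phi}(iii). Your explicit justification of the monotonicity of $\ell(\cdot)$ and the resulting disjointness/covering of the blocks is exactly the bookkeeping the paper leaves implicit in its displayed equality.
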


\begin{proof}
 For any $\nu>\overline{k}$, where $\overline{k}\in\mathbb{N}$ is the same as in condition H4, it holds 
 \begin{align*}
 \sum_{k=\overline{k}-1}^{\ell(\nu)-1}\|x^{k+1}-x^{k}\|&\le\sum_{k=\ell(\overline{k}-1)}^{\ell(\nu)-1}\|x^{k+1}-x^{k}\|=\sum_{k=\overline{k}}^{\nu}\sum_{j=\ell(k-1)}^{\ell(k)-1}\|x^{j+1}-x^{j}\|\\
 &\le c(\mu,\tau,a,m)\sum_{k=\overline{k}}^{\nu}\Big[\sum_{j=k-m-1}^{k-1}\!\!\Gamma_j+\Xi_k\Big],
 \end{align*}
 where the second inequality is due to Proposition \ref{prop-H4}. 
 It is not hard to check that 
 \[
   \sum_{k=\overline{k}}^{\nu}\sum_{j=k-m-1}^{k-1}\Gamma_j\le(m\!+\!1)\!\!\sum_{j=\overline{k}-m-1}^{\nu-1}\Gamma_j.
 \]
 From the above two inequalities, it is immediate to obtain 
 \[
  \sum_{k=\overline{k}-1}^{\ell(\nu)-1}\|x^{k+1}-x^{k}\|\le(m+1)c(\mu,\tau,a,m)\!\!\sum_{j=\overline{k}-m-1}^{\nu-1}\!\!\Gamma_j+c(\mu,\tau,a,m)\sum_{k=\overline{k}}^{\nu}\Xi_k.
 \]
 Passing $\nu\to\infty$ to this inequality and using Proposition \ref{prop2-Phi} (iii) yields the result.  
 \end{proof}
 
 The boundedness assumption is basic for the full convergence analysis of iterate sequences. The assumption on the boundedness of $\{z^k\}_{k\in\mathbb{N}}$ in Theorem \ref{KL-converge} automatically holds whenever $\mathcal{L}_{\Theta}:=\{z\in\!\mathbb{Z}\,|\,\Theta(z)\le\Phi(x^0)+\sup_{k\in\mathbb{N}}\varsigma_k\}$ is bounded, since according to Lemma \ref{lemma1-Phi} (i) and inequality \eqref{Phi-Theta} it holds $\{z^k\}_{k\in\mathbb{N}}\subset\mathcal{L}_{\Theta}$. In fact, the boundedness of $\{z^k\}_{k\in\mathbb{N}}$ usually comes from that of $\{x^k\}_{k\in\mathbb{N}}$; see Section \ref{sec4}. 
\begin{remark}\label{Remark-GLL}
 It is worth emphasizing that the proof technique of Theorem \ref{KL-converge} is completely different from the one adopted in \cite{QianPan23}, and the partition of the iterate sequence $\{x^k\}_{k\in\mathbb{N}}$ is not used here. 
Technically, \cite{QianPan23} partitioned the iterate sequence $\{x^k\}_{k\in\mathbb{N}}$ into two parts: $\{x^k\}_{k\in\mathcal{K}_1}$ and $\{x^k\}_{k\in\overline{\mathcal{K}}_1}$ with 
 $\mathcal{K}_1\!:=\big\{k\in\mathbb{N}\,|\,\Phi(x^{k+1}) \le \Phi( x^{\ell(k+1)})-\frac{a}{2}\| x^{k+1}-x^k\|^2\big\}$ and $\overline{\mathcal{K}}_1\!:=\mathbb{N}\backslash\mathcal{K}_1$.
Their behaviour is separately analyzed in \cite{QianPan23} to prove the full convergence under the condition (2.6) there. 
As mentioned in the introduction, 
\cite[Condition~(2.6)]{QianPan23} is very restrictive.
To our best knowledge,  only KL weakly convex functions with a restricted weakly convex parameter are known to satisfy this condition. 
 In contrast, the key to the proof of Theorem \ref{KL-converge} is to bound $\sum_{k=1}^{\infty}\|x^{k}-x^{k-1}\|$ with the convergent series $\sum_{k=1}^{\infty}\Xi_k$ and $\sum_{k=1}^{\infty}\Gamma_k$. To achieve this goal, we introduce condition H4 to control the gap of all $\Phi(x^i)$ for $i\in[\ell(k\!-\!1)+1,\ell(k)\!-\!1]$ from $\Phi(x^{\ell(k)})$, and employ H4 to skillfully establish Proposition~\ref{prop-H4} (essentially Theorem~\ref{KL-converge}). 
 As will be shown in Section \ref{sec3.2}, condition H4 is a product of the prox-regularity of $\Phi$ on the set of cluster points of $\{x^k\}_{k\in\mathbb{N}}$ if $\Theta$ is allowed to be $\Phi$, which is applicable to much wider scenarios than the condition (2.6) of \cite{QianPan23}.
 \end{remark}

\subsection{Convergence rate} \label{Subsection-rate}

 To achieve the convergence rate of $\{x^k\}_{k\in\mathbb{N}}$, we need the following result.
\begin{proposition}\label{KL-Xirate}
 Let $\{x^k\}_{k\in\mathbb{N}}$ and $\{z^k\}_{k\in\mathbb{N}}$ be the sequences satisfying H1-H3. Suppose that $\{z^k\}_{k\in\mathbb{N}}$ is bounded, and that $\Theta$ is a KL function of exponent $\theta$. Then, there exist $\widetilde{k}\in\mathbb{N},\widehat{\gamma}>0,\widehat{\varrho}\in(0,1),\widetilde{\gamma}>0$ and $\widetilde{\varrho}\in(0,1)$ such that for all $k\ge\widetilde{k}$,
 \begin{equation}\label{XG_rate}
  \sum_{j=k}^{\infty}\Xi_{j}\le\left\{\begin{array}{cl}
 	\!\widehat{\gamma}\widehat{\varrho}^{k}
       &{\rm if}\ \theta\in(0,\frac{1}{2}],\\
		\widehat{\gamma}{k}^{\frac{1-\theta}{1-2\theta}}&{\rm if}\ \theta\in(\frac{1}{2},1)
	\end{array}\right.\,{\rm and}\ 
	\sum_{j=k}^{\infty}\Gamma_{j}\le\left\{\begin{array}{cl}
		\!\widetilde{\gamma}\widetilde{\varrho}^{k} 
		&{\rm if}\ \theta\in(0,\frac{1}{2}],\\
		\widetilde{\gamma}{k}^{\frac{1-\theta}{1-2\theta}}&{\rm if}\ \theta\in(\frac{1}{2},1).
	\end{array}\right.    
 \end{equation}
\end{proposition}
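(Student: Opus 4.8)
The plan is to reuse, with the desingularizing function specialized to $\varphi(t)=ct^{1-\theta}$ (so $\varphi'(t)=c(1-\theta)t^{-\theta}$) afforded by the exponent-$\theta$ KL property of $\Theta$, the three inequalities already established inside the proof of Proposition~\ref{prop2-Phi}: the uniformized KL estimate \eqref{Phi-ineq0}, the summation recursion \eqref{rate-ineq1} for the $\Xi$-series, and the recursion \eqref{KL-Gammak} for the $\Gamma$-series, together with the nonmonotone descent \eqref{philk-descent} and the quantities $\Xi_k,\Gamma_k$ from \eqref{Xi-Gamk}. Put $\omega_k:=\Phi(x^{\ell(k)})-\varpi^*\ge0$ (nonincreasing by Lemma~\ref{lemma1-Phi}(i)), $S_k:=\sum_{j\ge k}\Xi_j$ and $T_k:=\sum_{j\ge k}\Gamma_j$, which are finite and nonincreasing by Proposition~\ref{prop2-Phi}(iii). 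If $\omega_{k_0}=0$ for some $k_0$, then Lemma~\ref{lemma1-Phi}(iii) makes $\{x^k\}$ eventually constant, so $S_k=T_k=0$ for all large $k$ and \eqref{XG_rate} is trivial; so assume $\omega_k>0$ for every $k$. Inserting $\varphi'$ into \eqref{Phi-ineq0} gives the master relation $\omega_k^{\theta}\le bc(1-\theta)\Xi_k=:C_1\Xi_k$ for all large $k$.

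\emph{Case $\theta\in(0,1/2]$.} Then $\tfrac{1-\theta}{\theta}\ge1$; since $\Xi_k\to0$, for large $k$ one has $\varphi(\omega_k)=c\,\omega_k^{1-\theta}\le cC_1^{(1-\theta)/\theta}\Xi_k^{(1-\theta)/\theta}\le c_1\Xi_k$, hence $\sum_{j=k}^{k+m}\varphi(\omega_j)\le c_1(S_k-S_{k+m+1})$. Letting $\nu\to\infty$ in \eqref{rate-ineq1} then yields $\tfrac34 S_{k+m+1}\le(\tfrac14+ba^{-1}c_1)(S_k-S_{k+m+1})$, i.e.\ $S_{k+m+1}\le\varrho S_k$ with $\varrho:=\tfrac{1/4+ba^{-1}c_1}{1+ba^{-1}c_1}\in(0,1)$, for all $k\ge\widetilde k$. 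Iterating over blocks of length $m+1$ gives geometric decay $S_k\le\widehat\gamma\,\widehat\varrho^{\,k}$ with $\widehat\varrho:=\varrho^{1/(m+1)}$; substituting $\varphi(\omega_k)\le c_1\Xi_k\le c_1 S_k$ into \eqref{KL-Gammak} ($\nu\to\infty$) gives $T_k\le(\tfrac12+\tfrac{bc_1}{2})S_k\le\widetilde\gamma\,\widehat\varrho^{\,k}$, so one may take $\widetilde\varrho=\widehat\varrho$.

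\emph{Case $\theta\in(1/2,1)$.} Write $s:=\tfrac{1-\theta}{\theta}\in(0,1)$. From the master relation and concavity of $t\mapsto t^s$ (Jensen over the window) one gets $\sum_{j=k}^{k+m}\varphi(\omega_j)\le c_2(S_k-S_{k+m+1})^{s}$; since $S_k\le1$ eventually, $u\le u^s$ absorbs the linear term in \eqref{rate-ineq1} ($\nu\to\infty$) and leaves $S_l\le c_3(S_{l-m-1}-S_l)^{s}$ for $l:=k+m+1$ large. Using \eqref{philk-descent} with first index $l-m-1$ together with the master relation at $l$ gives $\omega_{l-m-1}-\omega_l\ge a\Xi_l^2\ge aC_1^{-2}\omega_l^{2\theta}$; combining with $\sum_{j=l-m-1}^{l-1}\Gamma_j^2=\omega_{l-m-1}-\omega_l\le\big(\sum_{j=l-m-1}^{l-1}\Gamma_j\big)^2=(T_{l-m-1}-T_l)^2$ yields $\omega_l^{\theta}\le\tfrac{C_1}{\sqrt a}(T_{l-m-1}-T_l)$, hence $\varphi(\omega_l)\le c_4(T_{l-m-1}-T_l)^{s}$. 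Plugging the bound on $S_l$ and this one into \eqref{KL-Gammak} at index $l$ gives $T_l\le c_5(V_{l-m-1}-V_l)^{s}$ with $V_k:=S_k+T_k$ (nonincreasing), and adding this to $S_l\le c_3(S_{l-m-1}-S_l)^{s}\le c_3(V_{l-m-1}-V_l)^{s}$ produces, for all large $l$,
\[
 V_l\le C_V\,(V_{l-m-1}-V_l)^{s}\le C_V\max\big\{\,l^{\frac{1-\theta}{1-2\theta}},\,(V_{l-m-1}-V_l)^{\frac{1-\theta}{\theta}}\,\big\}.
\]
Since $\{V_l\}\subset\mathbb{R}_+$ is nonincreasing, Lemma~\ref{lemma2-sequence} gives $V_l=O\big(l^{\frac{1-\theta}{1-2\theta}}\big)$, and $S_l\le V_l$, $T_l\le V_l$ then give both bounds in \eqref{XG_rate}; take $\widetilde k$ to be the largest of the thresholds used above.

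\emph{Main obstacle.} The delicate step is the $\Gamma$-tail estimate in the sublinear regime. The direct route via \eqref{KL-Gammak}, $T_k\le\tfrac12 S_k+\tfrac b2\varphi(\omega_k)$, is too lossy: the best decay obtainable for $\omega_k$ from \eqref{Phi-ineq0} and \eqref{philk-descent} (again via Lemma~\ref{lemma2-sequence}) is $\omega_k=O(k^{\frac{1-\theta}{1-2\theta}})$, whence $\varphi(\omega_k)=O(k^{\frac{(1-\theta)^2}{1-2\theta}})$ decays strictly slower than the target exponent $\tfrac{1-\theta}{1-2\theta}$. The remedy—the crux of the argument—is to rewrite $\varphi(\omega_l)$ as an increment $(T_{l-m-1}-T_l)^{s}$ of the $\Gamma$-tail by means of the descent inequality, and then to invoke Lemma~\ref{lemma2-sequence} not for $\{S_l\}$ or $\{T_l\}$ separately but for the coupled sequence $V_l=S_l+T_l$, which makes the recursion self-contained.
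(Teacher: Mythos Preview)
Your argument is correct and shares the paper's overall strategy: specialize $\varphi(t)=ct^{1-\theta}$ in \eqref{Phi-ineq0}, \eqref{rate-ineq1}, \eqref{KL-Gammak}, then appeal to Lemma~\ref{lemma2-sequence} in the sublinear regime. The differences are tactical. For $\theta\in(0,1/2]$ you bound $T_k$ directly by a multiple of $S_k$, which is shorter than the paper's route of expressing $\varphi(\omega_k)$ through $T_{k-m-1}-T_k$ (see \eqref{KLrate-Gammak}) and then solving the resulting affine recursion $\widetilde\Lambda_k\le\varrho_1\widetilde\Lambda_{k-m-1}+\gamma_1\widehat\varrho^{\,k}$. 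For $\theta\in(1/2,1)$, both you and the paper reach $\varphi(\omega_l)\le c_4(T_{l-m-1}-T_l)^s$ via \eqref{philk-descent}; from there the paper simply substitutes the already-proved rate $S_k\le\widehat\gamma\,k^{(1-\theta)/(1-2\theta)}$ into $T_k\le\tfrac12 S_k+\widehat c(\theta)(T_{k-m-1}-T_k)^s$, which lands exactly in the $\max$-form of Lemma~\ref{lemma2-sequence} applied to $\{T_k\}$ alone. Your coupling $V_k=S_k+T_k$ is a clean alternative yielding the pure recursion $V_l\le C_V(V_{l-m-1}-V_l)^s$, but it is not strictly necessary---so your ``main obstacle'' paragraph somewhat overstates the difficulty, since the paper never attempts the lossy route (bounding $\varphi(\omega_k)$ via a rate on $\omega_k$) you warn against.
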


\begin{proof}
We will only consider $\Phi(x^{\ell(k)})\ne\varpi^*$ for every $k\in\mathbb{N}$. Otherwise, it is trivial by Lemma~\ref{lemma1-Phi} (i) and (iii). From the proof of Proposition~\ref{prop2-Phi}~(iii), the previous \eqref{Phi-ineq0} holds with $\varphi(t)=ct^{1-\theta}$ for all $k\ge\widehat{k}$, where $c>0$ is a constant and $\widehat{k}$ is the same as in the proof of Proposition~\ref{prop2-Phi} (iii). 
Then, for all $k\ge\widehat{k}$,
 \begin{equation}\label{KLtheta-ineq}
  \varphi(\Phi(x^{\ell(k)})\!-\!\varpi^*)
   =c[(\Phi(x^{\ell(k)})\!-\!\varpi^*)^{\theta}]^{\frac{1-\theta}{\theta}}\le c\big[bc(1\!-\!\theta)\big]^{\frac{1-\theta}{\theta}}\Xi_k^{\frac{1-\theta}{\theta}},
 \end{equation}
 where the inequality is obtained by using \eqref{Phi-ineq0}. By substituting the above inequality into the previous \eqref{rate-ineq1}, it follows for any $\nu>k\ge\widehat{k}$, 
 \begin{equation*}
 \frac{3}{4}\sum_{j=k+m+1}^{\nu+m+1}\,\Xi_j\le\frac{1}{4}\sum_{j=k}^{k+m}\,\Xi_j+{bc}a^{-1}\big[bc(1-\theta)\big]^{\frac{1-\theta}{\theta}}	\sum_{j=k}^{k+m}\Xi_j^{\frac{1-\theta}{\theta}}.
\end{equation*}
 Let $\Lambda_k\!:=\sum_{j=k}^{\infty}\Xi_{j}$ for each $k\in\mathbb{N}$. Passing the limit $\nu\to\infty$ to this inequality gives
 \begin{equation}\label{Xik-ineq1}
 \frac{3}{4}\Lambda_{k+m+1}\le\frac{1}{4}\sum_{j=k}^{k+m}\Xi_j+{bc}a^{-1}\big[bc(1-\theta)\big]^{\frac{1-\theta}{\theta}}	\sum_{j=k}^{k+m}\Xi_j^{\frac{1-\theta}{\theta}}.
 \end{equation}
 When $\theta\in(0,1/2]$, since $\frac{1-\theta}{\theta}\ge 1$ and $\Xi_k<1$ for all $k\ge\widehat{k}$ (if necessary by increasing $\widehat{k}$), the following inequality holds with $M=\frac{1}{3}+\frac{4bc}{3a}[bc(1-\theta)]^{\frac{1-\theta}{\theta}}$:
 \[
   \Lambda_{k+m+1}\le M(\Lambda_{k}-\Lambda_{k+m+1}),
 \]
 which implies $\Lambda_{k+m+1}\le\frac{M}{1+M}\Lambda_{k}$ for all $k\ge\widehat{k}$. Consequently, for all $k\ge\widehat{k}+m\!+\!1$,  
 $\Lambda_{k}\le\frac{M}{1+M}\Lambda_{k-m-1}$. 
 According to this recursion formula, we get $\Lambda_{k}\le{\tau_0}^{\lfloor\frac{k-\widehat{k}}{m+1}\rfloor}\Lambda_{\widehat{k}}$ with $\tau_0=\frac{M}{1+M}$, which shows that the first inequality in \eqref{XG_rate} holds for $\theta\in(0,{1}/{2}]$. When $\theta\in(1/2,1)$, since $\Xi_k<1$ for all $k\ge\widehat{k}$ (if necessary by increasing $\widehat{k}$) and $\frac{1-\theta}{\theta}<1$, from the above \eqref{Xik-ineq1}, for any $k\ge\widehat{k}$,  
 \begin{align*}
\Lambda_{k+m+1}\le M\sum_{j=k}^{k+m}\!\Xi_j^{\frac{1-\theta}{\theta}}&\le M(m+1)^{\frac{2\theta-1}{\theta}}\Big[\sum_{j=k}^{k+m}\!\Xi_j\Big]^{\frac{1-\theta}{\theta}}\\
&=M(m+1)^{\frac{2\theta-1}{\theta}}\big(\Lambda_{k}-\!\Lambda_{k+m+1}\big)^{\frac{1-\theta}{\theta}}, 
\end{align*}  
 where the second inequality is due to the concavity of $\mathbb{R}_{++}\ni t\mapsto t^{\frac{1-\theta}{\theta}}$. Hence, for all $k\ge\widehat{k}+m\!+\!1$, $\Lambda_{k}\le M(m\!+\!1)^{\frac{2\theta-1}{\theta}}\big(\Lambda_{k-m-1}\!-\!\Lambda_{k}\big)^{\frac{1-\theta}{\theta}}$. 
 Now invoking Lemma \ref{lemma2-sequence} leads to the first claim in \eqref{XG_rate} for $\theta\in({1}/{2},1)$. 
 
 For the second one, by passing $\nu\to\infty$ to \eqref{KL-Gammak} and using \eqref{KLtheta-ineq}, for any $k\ge\widehat{k}$,
 \begin{align}\label{KLrate-Gammak}
 \sum_{j=k}^{\infty}\Gamma_{j}
 &{\stackrel{\eqref{KL-Gammak}}{\le}}\frac{1}{2}\sum_{j=k}^{\infty}\Xi_j
   +\frac{b}{2}\varphi\big(\Phi(x^{\ell(k)})-\varpi^*\big)
  {\stackrel{\eqref{KLtheta-ineq}}{\le}}\frac{1}{2}\sum_{j=k}^{\infty}\Xi_j
	 +\frac{bc}{2}\big[bc(1-\theta)\big]^{\frac{1-\theta}{\theta}}
	 \Xi_k^{\frac{1-\theta}{\theta}}\nonumber\\
  &{\stackrel{\eqref{philk-descent}}{\le}}\frac{1}{2}\sum_{j=k}^{\infty}\Xi_j
	 +\frac{bc}{2}\big[bc(1-\theta)\big]^{\frac{1-\theta}{\theta}}
	 \Big[\frac{1}{\sqrt{a}}\sqrt{\Phi(x^{\ell(k-m-1)})-\Phi(x^{\ell(k)})}\,\Big]^{\frac{1-\theta}{\theta}}\nonumber\\
  &=\frac{1}{2}\sum_{j=k}^{\infty}\Xi_j+\frac{bc}{2}
	 a^{\frac{\theta-1}{2\theta}}\big[bc(1-\theta)\big]^{\frac{1-\theta}{\theta}}\Big[\sum_{j=k-m-1}^{k-1}\Gamma_j^2\Big]^{\frac{1-\theta}{2\theta}}\nonumber\\
 &\le\frac{1}{2}\sum_{j=k}^{\infty}\Xi_j+\frac{bc}{2}
	 a^{\frac{\theta-1}{2\theta}}\big[bc(1-\theta)\big]^{\frac{1-\theta}{\theta}}\Big[\sum_{j=k-m-1}^{k-1}\Gamma_j\Big]^{\frac{1-\theta}{\theta}}.    
 \end{align}
 For each $k\in\mathbb{N}$, let $\widetilde{\Lambda}_k\!:=\sum_{j=k}^{\infty}\Gamma_{j}$ and write $\widehat{c}(\theta):=\frac{bc}{2}a^{\frac{\theta-1}{2\theta}}\big[bc(1\!-\!\theta)\big]^{\frac{1-\theta}{\theta}}$. When $\theta\in(0,1/2]$, since $\frac{1-\theta}{\theta}\ge 1$ and $\sum_{j=k-m-1}^{k-1}\Gamma_j\le 1$ for all $k\ge\widehat{k}$ (if necessary by increasing $\widehat{k}$), combining \eqref{KLrate-Gammak} and the first inequality in \eqref{XG_rate} leads to
 \[
 \widetilde{\Lambda}_k\le \frac{1}{2}\widehat{\gamma}\widehat{\varrho}^{k}+\widehat{c}(\theta)(\widetilde{\Lambda}_{k-m-1}-\widetilde{\Lambda}_{k}). 
 \]
 Set $\varrho_1\!:=\!\frac{\widehat{c}(\theta)}{1+\widehat{c}(\theta)}$ and $\gamma_1=\frac{\widehat{\gamma}}{2(1+\widehat{c}(\theta))}$. Then, it holds
 $\widetilde{\Lambda}_{k}\le\varrho_1\widetilde{\Lambda}_{k-m-1}+\gamma_1\widehat{\varrho}^{k}$ for all $k\ge\widehat{k}$. By using this recursion relation, for each $k\ge\widehat{k}$ and $j\in\{1,\ldots,\lfloor\frac{k-\widehat{k}}{m+1}\rfloor\}$, 
 \[
   \widetilde{\Lambda}_{k}\le\rho_1^{j}\widetilde{\Lambda}_{k-j(m+1)}+\gamma_1\sum_{l=1}^{j}\rho_1^{l-1}\widehat{\rho}^{k-(l-1)(m+1)}.
 \] 
 Then, by setting $\widetilde{k}\!:=\!\lfloor\frac{k-\widehat{k}}{m+1}\rfloor$, we obtain
 \begin{equation*}
  \widetilde{\Lambda}_{k}
  \le\varrho_1^{\widetilde{k}}\widetilde{\Lambda}_{k-\widetilde{k}(m+1)}+\gamma_1\widehat{\varrho}^{k}\Big[1+\frac{\varrho_1}{\widehat{\varrho}^{m+1}}+\cdots+\big(\frac{\varrho_1}{\widehat{\varrho}^{m+1}}\big)^{\widetilde{k}-1}\Big].
  \end{equation*}
  After a simple calculation for $\frac{\varrho_1}{\widehat{\varrho}^{m+1}}>1,\frac{\varrho_1}{\widehat{\varrho}^{m+1}}=1$ and $\frac{\varrho_1}{\widehat{\varrho}^{m+1}}<1$, respectively, there exist $\widetilde{\gamma}>0$ and $\widetilde{\varrho}\in(0,1)$ such that $\widetilde{\Lambda}_k\le\widetilde{\gamma}\widetilde{\varrho}^{k}$ for all $k$ large enough. When $\theta\in(1/2,1)$, from the above \eqref{KLrate-Gammak} and the first inequality in \eqref{XG_rate}, for all $k\ge\widehat{k}$, 
  \begin{align*}
  \widetilde{\Lambda}_k=\sum_{j=k}^{\infty}\Gamma_j&\le\frac{1}{2}\sum_{j=k}^{\infty}\Xi_j
	 +\widehat{c}(\theta)\Big[\sum_{j=k-m-1}^{k-1}\Gamma_j\Big]^{\frac{1-\theta}{\theta}}\\
  &\le\frac{1}{2}\widehat{\gamma}{k}^{\frac{1-\theta}{1-2\theta}}+\widehat{c}(\theta)(\widetilde{\Lambda}_{k-m-1}-\widetilde{\Lambda}_{k})^{\frac{1-\theta}{\theta}}\\
  &\le M_1\max\big\{(\widetilde{\Lambda}_{k-m-1}-\widetilde{\Lambda}_{k})^{\frac{1-\theta}{\theta}},{k}^{\frac{1-\theta}{1-2\theta}}\big\}
  \end{align*}  
  with $M_1=\frac{1}{2}\widehat{\gamma}+\widehat{c}(\theta)$. The result follows Lemma \ref{lemma2-sequence}. The proof is completed.  
 \end{proof}

Now combining Proposition~\ref{KL-Xirate} and Theorem~\ref{KL-converge} leads to the convergence rate of $\{x^k\}_{k\in\mathbb{N}}$. Among others, when $\theta=0$, the conclusion follows 
by using Lemma~\ref{lemma1-Phi} (iii) and a similar argument as those for \cite[Theorem 2.9~(i)]{QianPan23}. 
\begin{theorem}\label{KL-rate}
 Let $\{x^k\}_{k\in\mathbb{N}}$ and $\{z^k\}_{k\in\mathbb{N}}$ be the sequences satisfying H1-H4. Suppose that $\{z^k\}_{k\in\mathbb{N}}$ is bounded, and that $\Theta$ is a KL function of exponent $\theta$. Then, 
 \begin{description}
 \item [(i)] when $\theta=0$, the sequence $\{x^k\}_{k\in\mathbb{N}}$ converges to $\overline{x}$ within a finite number of steps; 
		
 \item [(ii)] when $\theta\in(0,1)$, there exist $\gamma>0,\varrho\in(0,1)$ such that for $k$ large enough,
  \begin{equation}\label{aim-ineq-rate}
  \|x^k-\overline{x}\|\le\sum_{j=k}^{\infty}\|x^{j+1}\!-\!x^j\|\le\left\{\begin{array}{cl}
   \gamma\varrho^{k} &{\rm if}\ \theta\in(0,1/2],\\
    \gamma k^{\frac{1-\theta}{1-2\theta}}&{\rm if}\ \theta\in(1/2,1).
  \end{array}\right.
  \end{equation}
  \end{description}
\end{theorem}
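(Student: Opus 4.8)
The plan is to obtain Theorem~\ref{KL-rate}(ii) by carrying the telescoping estimate from the proof of Theorem~\ref{KL-converge} over to tail sums and then substituting the decay rates of Proposition~\ref{KL-Xirate}, and to handle Theorem~\ref{KL-rate}(i) separately, since Proposition~\ref{KL-Xirate} only speaks to $\theta\in(0,1)$.

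For part~(ii), first I would rerun the inequality chain in the proof of Theorem~\ref{KL-converge} with the outer sum starting at an arbitrary $K\ge\overline{k}$ instead of at $\overline{k}$. Since the blocks $[\ell(k-1),\ell(k)-1]$ with $k\in[K,\nu]$ tile $[\ell(K-1),\ell(\nu)-1]$, Proposition~\ref{prop-H4} together with $\sum_{k=K}^{\nu}\sum_{j=k-m-1}^{k-1}\Gamma_j\le(m+1)\sum_{j=K-m-1}^{\nu-1}\Gamma_j$ and the limit $\nu\to\infty$ give
\[
 \sum_{k=\ell(K-1)}^{\infty}\|x^{k+1}-x^{k}\|\le c(\mu,\tau,a,m)\Big[(m+1)\!\!\sum_{j=K-m-1}^{\infty}\!\!\Gamma_j+\sum_{k=K}^{\infty}\Xi_k\Big].
\]
Plugging in Proposition~\ref{KL-Xirate}: for $\theta\in(0,1/2]$ the right-hand side is bounded by a constant times $\widehat{\varrho}^{\,K}+\widetilde{\varrho}^{\,K-m-1}$, hence by $\gamma\varrho^{K}$ with $\varrho:=\max\{\widehat{\varrho},\widetilde{\varrho}\}$; for $\theta\in(1/2,1)$ it is bounded by a constant times $K^{\frac{1-\theta}{1-2\theta}}+(K-m-1)^{\frac{1-\theta}{1-2\theta}}$, hence by $\gamma K^{\frac{1-\theta}{1-2\theta}}$, the comparison being valid because $\frac{1-\theta}{1-2\theta}<0$ and $(K-m-1)/K\to1$. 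Finally, Theorem~\ref{KL-converge} gives $x^k\to\overline{x}$, so $\|x^k-\overline{x}\|\le\sum_{j=k}^{\infty}\|x^{j+1}-x^j\|\le\sum_{j=\ell(k)}^{\infty}\|x^{j+1}-x^j\|$ (using $\ell(k)\le k$); choosing $K=k+1$ above, so that $\ell(K-1)=\ell(k)$, and absorbing the offsets $k+1\to k$, $k-m\to k$ into the constant proves \eqref{aim-ineq-rate}.

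For part~(i), $\theta=0$, I would argue by contradiction. Suppose $\Phi(x^{\ell(k)})\ne\varpi^*$ for every $k$; then, exactly as in the derivation of \eqref{Phi-ineq0} in the proof of Proposition~\ref{prop2-Phi}(iii) but with $\varphi(t)=ct$, so that $\varphi'\equiv c$, one obtains $1\le\varphi'(\Phi(x^{\ell(k)})-\varpi^*)\,{\rm dist}(0,\partial\Theta(z^{\ell(k)}))\le cb\,\Xi_k$ for all $k\ge\widehat{k}$, contradicting $\Xi_k\to0$ from Lemma~\ref{lemma1-Phi}(ii). Hence $\Phi(x^{\ell(k_0)})=\varpi^*$ for some $k_0$; Lemma~\ref{lemma1-Phi}(i) propagates this to all $k\ge k_0$, and Lemma~\ref{lemma1-Phi}(iii) then forces $x^k=\overline{x}$ for all $k\ge k_0$, i.e.\ finite termination, as for \cite[Theorem~2.9(i)]{QianPan23}.

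The main obstacle I anticipate is the index bookkeeping in passing from the tail bound indexed by the starting point $K$ to an estimate on $\|x^k-\overline{x}\|$ at the running index $k$: one must verify that the offsets produced by $\ell(\cdot)$, by the window $[k-m-1,k-1]$ of $\Gamma$'s in Proposition~\ref{prop-H4}, and by the substitution $K=k+1$ do not degrade the rate. This is immediate in the geometric regime, and in the sublinear regime it hinges precisely on $\frac{1-\theta}{1-2\theta}<0$ making $(k-c)^{\frac{1-\theta}{1-2\theta}}$ comparable to $k^{\frac{1-\theta}{1-2\theta}}$ for large $k$; a minor additional check is that the largeness thresholds from Proposition~\ref{KL-Xirate} are compatible with needing $K-m-1\ge\widetilde{k}$.
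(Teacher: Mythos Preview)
Your proposal is correct and takes essentially the same approach as the paper, which simply states that part~(ii) follows by combining Proposition~\ref{KL-Xirate} with Theorem~\ref{KL-converge} and that part~(i) follows from Lemma~\ref{lemma1-Phi}(iii) and the argument of \cite[Theorem~2.9(i)]{QianPan23}. You have correctly fleshed out both steps: rerunning the telescoping estimate of Theorem~\ref{KL-converge} from an arbitrary starting index and substituting the tail rates of Proposition~\ref{KL-Xirate} for~(ii), and the contradiction via \eqref{Phi-ineq0} with $\varphi'\equiv c$ and $\Xi_k\to0$ leading into Lemma~\ref{lemma1-Phi}(iii) for~(i); your treatment of the index offsets is sound.
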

\subsection{Ensuring condition H4}\label{sec3.2}

 From Section \ref{sec3.1}, we see that the convergence analysis of the iterative framework H1-H4 involves handling the term $\sum_{j=\ell(k-1)}^{\ell(k)-1}\|x^{j+1}\!-\!x^{j}\|$ 
 for $\mathbb{N}\ni k>m$, and condition H4 is specially singled out to bound this term with $\Xi_k+\sum_{j=k-m-1}^{k-1}\Gamma_j$, as show by Proposition~\ref{prop-H4}. Condition H4 seems a little complicated and we need to clarify when it holds. Good news is that it can be satisfied under rather reasonable conditions. For example, when the difference of consecutive objective values can be bounded by $({\overline{b}}/{2})\|x^{k+1}-x^k\|^2$ for some $\overline{b}>0$, i.e., the objective value may increase but the increase magnitude is bounded by $({\overline{b}}/{2})\|x^{k+1}-x^k\|^2$, H4 automatically holds by the following lemma. 
\begin{lemma}\label{sequence-cond}
 If there exist $\overline{k}>m$ and $\overline{b}>0$ such that $\Phi(x^{k+1})-\Phi(x^k)\le ({\overline{b}}/{2})\|x^{k+1}\!-x^k\|^2$ for all $k\ge\overline{k}$, then condition H4 automatically holds.
\end{lemma}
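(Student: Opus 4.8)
The plan is to produce explicit constants for H4 directly from the telescoping of consecutive objective-value differences; the extra ingredient $\Phi(x^{k+1})-\Phi(x^k)\le(\overline{b}/2)\|x^{k+1}-x^k\|^2$ is used to control each single-step increase. Concretely, I would take $\tau=1/2$ (any value in $(0,1)$ works), $\mu:=\sqrt{\overline{b}/2}$, and as the threshold index in H4 the value $\overline{k}+m$, which still satisfies $\overline{k}+m>m$ since $\overline{k}>m$; here $\overline{k}$ and $\overline{b}$ are the constants supplied by the hypothesis.

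Fix any $k\ge\overline{k}+m$ and any $i\in\mathbb{N}$ with $\ell(k-1)+1\le i\le\ell(k)-1$ (if this index set is empty there is nothing to check). The first step is index bookkeeping. Since $\ell(k-1)\ge (k-1)-m$, we get $i\ge\ell(k-1)+1\ge k-m\ge\overline{k}$, and clearly $i\le\ell(k)-1\le k-1$; hence every $j\in\{i,i+1,\ldots,\ell(k)-1\}$ satisfies $j\ge\overline{k}$, so the hypothesis inequality applies at each such $j$. Also $k\le i+m$, so by Lemma~\ref{lemma1-Phi}~(i) we have $\Phi(x^{\ell(k)})\ge\Phi(x^i)$, which (as already noted after H4) legitimizes the square root below.

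The second step is the telescoping estimate
\[
 \Phi(x^{\ell(k)})-\Phi(x^i)=\sum_{j=i}^{\ell(k)-1}\big(\Phi(x^{j+1})-\Phi(x^j)\big)\le\frac{\overline{b}}{2}\sum_{j=i}^{\ell(k)-1}\|x^{j+1}-x^j\|^2 .
\]
Taking square roots, using $\sqrt{\sum_j a_j^2}\le\sum_j a_j$ for $a_j\ge0$, and reindexing $j\mapsto j+1$, this yields
\[
 \sqrt{\Phi(x^{\ell(k)})-\Phi(x^i)}\ \le\ \sqrt{\overline{b}/2}\sum_{j=i}^{\ell(k)-1}\|x^{j+1}-x^j\|\ =\ \mu\sum_{j=i+1}^{\ell(k)}\|x^j-x^{j-1}\| .
\]
Since the term $\tau\sqrt{a}\,\|x^i-x^{i-1}\|$ appearing on the right-hand side of H4 is nonnegative, adding it preserves the inequality, and the last display is exactly the estimate required in H4 with the chosen $\tau,\mu$ and threshold $\overline{k}+m$.

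There is no genuine obstacle here. The only points that need care are: the index accounting that guarantees the hypothesis inequality is available on the whole block $\{i,\ldots,\ell(k)-1\}$ — this is what forces the H4 threshold to be taken as $\overline{k}+m$ rather than $\overline{k}$; the observation that, after reindexing, the telescoped sum $\sum_{j=i}^{\ell(k)-1}\|x^{j+1}-x^j\|$ coincides with the $\mu$-sum $\sum_{j=i+1}^{\ell(k)}\|x^j-x^{j-1}\|$ in H4; and the nonnegativity of $\Phi(x^{\ell(k)})-\Phi(x^i)$. Notably, the $\tau$-term of H4 is not needed at all, so $\tau$ may be chosen to be any number in $(0,1)$.
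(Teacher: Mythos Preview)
Your proof is correct and follows essentially the same approach as the paper: telescope $\Phi(x^{\ell(k)})-\Phi(x^i)$ into consecutive differences, bound each by the hypothesis, and use subadditivity of the square root to obtain H4 with $\mu=\sqrt{\overline{b}/2}$ and any $\tau\in(0,1)$. Your extra care in taking the H4 threshold as $\overline{k}+m$ (rather than $\overline{k}$) to guarantee the hypothesis is available on the whole block is a nice touch that the paper glosses over.
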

\begin{proof}
 Fix any $k\ge\overline{k}$. For each $i\in[\ell(k\!-\!1)+1,\ell(k)\!-\!1]$, it holds
 \begin{align*}
 \Phi(x^{\ell(k)})-\Phi(x^i)=\sum_{j=i+1}^{\ell(k)}\big[\Phi(x^j)-\Phi(x^{j-1})\big]&\le\frac{\overline{b}}{2}\sum_{j=i+1}^{\ell(k)}\|x^{j}-x^{j-1}\|^2.
 \end{align*}
 Using $\sqrt{\alpha\!+\!\beta}\!\le\!\sqrt{\alpha}\!+\!\sqrt{\beta}$ for $\alpha,\beta\!\ge\! 0$ leads to H4 for $\mu=\sqrt{{\overline{b}}/{2}}$ and any $\tau\in(0,1)$.  
\end{proof}
 
 As will be shown in Section \ref{sec4}, the objective value sequence from the GLL-type nonmonotone line search PG methods there indeed satisfies the requirement of Lemma~\ref{sequence-cond}. Alternatively, if the objective value sequence does not satisfy such a condition but $\Theta=\Phi$ is suitable for H3, then H4 is actually a natural product of the prox-regularity of $\Phi$ on $\mathcal{P}_{\mathbb{X}}(Z^*)$. This result is implied by Proposition~\ref{prox-regular} and Proposition~\ref{prop-ass0} below. 
\begin{proposition}\label{prox-regular}
 Let $\{x^k\}_{k\in\mathbb{N}}$ and $\{z^k\}_{k\in\mathbb{N}}$ be the bounded sequences satisfying H1-H3 with $\Theta=\!\Phi$. If there is $x^*\in\mathcal{P}_{\mathbb{X}}(Z^*)$ for which there exist $\kappa>0$ and $\varepsilon>0$ such that for all $x,y\in\mathbb{B}(x^*,\varepsilon)\cap[\Phi<\Phi(x^*)+\varepsilon]$ with ${\rm dist}(0,\partial\Phi(x))+{\rm dist}(0,\partial\Phi(y))<\varepsilon$, 
 \begin{equation}\label{ass0-ineq}
  \Phi(x)-\Phi(y)\le \kappa\big[{\rm dist}(0,\partial\Phi(x))^2 + {\rm dist}(0,\partial\Phi(y)) \| x-y\|
          + \|x-y\|^2\big],
 \end{equation} 
 then condition H4 holds.
\end{proposition}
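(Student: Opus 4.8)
The plan is to reduce condition~H4 to a local inequality that \eqref{ass0-ineq} produces almost for free, and then to show that this inequality governs all sufficiently late iterates by establishing that the whole sequence converges to $x^*$.

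\textbf{Step 1 (a local H4-estimate).} Since $\{z^k\}$ and hence $\{x^k\}$ are bounded and H1--H3 hold, Propositions~\ref{prop1-Phi} and~\ref{prop2-Phi} already supply $\|x^{k+1}-x^k\|\to0$, $\|x^k-x^{\ell(k)}\|\to0$, $\Phi(x^k)\to\varpi^*$, $Z^*$ compact with $\Phi(x^*)=\varpi^*$, and, using only H1--H3, the summability $\sum_{k\ge1}\Xi_k<\infty$ and $\sum_{k\ge1}\Gamma_k<\infty$. Because $\Theta=\Phi$, \eqref{dist-Theta} reads ${\rm dist}(0,\partial\Phi(x^k))\le b\|x^k-x^{k-1}\|$, which tends to $0$. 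Now fix $k$ and $i\in[\ell(k\!-\!1)\!+\!1,\ell(k)\!-\!1]$ and assume $x^i,x^{\ell(k)}\in\mathbb B(x^*,\varepsilon)\cap[\Phi<\Phi(x^*)+\varepsilon]$ with ${\rm dist}(0,\partial\Phi(x^i))+{\rm dist}(0,\partial\Phi(x^{\ell(k)}))<\varepsilon$. Apply \eqref{ass0-ineq} with $x=x^{\ell(k)}$, $y=x^i$; bound the two distance terms by $b\Xi_k$ and $b\|x^i-x^{i-1}\|$ via \eqref{dist-Theta}, use $\|x^{\ell(k)}-x^i\|\le S:=\sum_{j=i+1}^{\ell(k)}\|x^j-x^{j-1}\|$ together with $\Xi_k\le S$ (the $j=\ell(k)$ summand belongs to $S$ because $i\le\ell(k)-1$), and take square roots using $\sqrt{u+v}\le\sqrt u+\sqrt v$ and then $\sqrt{\|x^i-x^{i-1}\|\,S}\le\frac{t}{2}\|x^i-x^{i-1}\|+\frac{1}{2t}S$ with $t=\sqrt a/\sqrt{\kappa b}$. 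This yields precisely the inequality in~H4 with $\tau=1/2$ and $\mu=\sqrt{\kappa(b^2+1)}+\kappa b/(2\sqrt a)$. Hence H4 holds once one knows that, from some index on, every iterate entering it lies in this ``good set''.

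\textbf{Step 2 (convergence to $x^*$, the main obstacle).} For large indices the value and subgradient constraints above are automatic, since $\Phi(x^k)\to\varpi^*=\Phi(x^*)$ and ${\rm dist}(0,\partial\Phi(x^k))\le b\|x^k-x^{k-1}\|\to0$; what must be proved is that $\{x^k\}$ is eventually contained in $\mathbb B(x^*,\varepsilon)$, which is not clear since $x^*$ is only one of possibly several cluster points of a nonmonotone sequence. I would run the standard KL trapping argument, with the Step-1 estimate playing the role that H4 plays in Proposition~\ref{prop-H4} and Theorem~\ref{KL-converge}. Choose $k_0$ along a subsequence with $x^{k_0}\to x^*$, so large that $x^j\in\mathbb B(x^*,\varepsilon/2)$ for $k_0-m-1\le j\le k_0$ (possible since $\|x^{j+1}-x^j\|\to0$), $\Phi(x^j)<\Phi(x^*)+\varepsilon$ and $2b\|x^j-x^{j-1}\|<\varepsilon$ for $j\ge k_0-m-1$, and $(m{+}1)\sum_{j\ge k_0-m-1}\Gamma_j+\sum_{k\ge k_0}\Xi_k<\varepsilon/(2c)$, where $c$ is the constant of Proposition~\ref{prop-H4} for $\tau=1/2$ and the above $\mu$ (these tails vanish as $k_0\to\infty$ by Step~1). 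Then induct on $K\ge k_0$: assuming $x^j\in\mathbb B(x^*,\varepsilon)$ for $k_0-m-1\le j\le K$, Step~1 gives the H4-inequality on every window $[\ell(k\!-\!1)\!+\!1,\ell(k)\!-\!1]$ meeting the range; the algebraic recursion of Proposition~\ref{prop-H4} then gives $\sum_{j=\ell(k-1)}^{\ell(k)-1}\|x^{j+1}-x^j\|\le c[\sum_{j=k-m-1}^{k-1}\Gamma_j+\Xi_k]$; and summing over the relevant $k$ exactly as in the proof of Theorem~\ref{KL-converge} yields $\sum_{j=k_0}^{K}\|x^{j+1}-x^j\|<\varepsilon/2$, so $x^{K+1}\in\mathbb B(x^*,\|x^{k_0}-x^*\|+\varepsilon/2)\subset\mathbb B(x^*,\varepsilon)$. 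The induction gives $x^k\in\mathbb B(x^*,\varepsilon)$ for all $k\ge k_0$ together with $\sum_{k\ge k_0}\|x^{k+1}-x^k\|<\infty$, whence $x^k\to x^*$.

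\textbf{Step 3 (conclusion).} With $x^k\to x^*$ in hand, pick $\overline k>m$ so large that for every $k\ge\overline k$ all iterates with index in $[k-m-1,k]$ satisfy the three good-set constraints of Step~1; then for each such $k$ and each $i\in[\ell(k\!-\!1)\!+\!1,\ell(k)\!-\!1]$ the Step-1 estimate applies verbatim and delivers H4 with $\tau=1/2$ and $\mu=\sqrt{\kappa(b^2+1)}+\kappa b/(2\sqrt a)$. The crux is Step~2: one cannot simply quote Theorem~\ref{KL-converge} to obtain $x^k\to x^*$, because that theorem presupposes H4, so the trapping has to be carried out by hand, and the delicate bookkeeping is precisely that the nonmonotone windows $[\ell(k\!-\!1)\!+\!1,\ell(k)\!-\!1]$ force one to carry a buffer of $m+1$ past iterates when closing the induction and to feed the Step-1 estimate into the Proposition~\ref{prop-H4} recursion in place of the not-yet-available H4; every remaining ingredient (the value and subgradient conditions, the summability of $\{\Xi_k\}$ and $\{\Gamma_k\}$) is already a consequence of H1--H3.
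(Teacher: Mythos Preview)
Your approach is the paper's: derive a local version of H4 from \eqref{ass0-ineq} and \eqref{dist-Theta}, then run a trapping induction that feeds this local estimate into Proposition~\ref{prop-H4} to keep all iterates in $\mathbb{B}(x^*,\varepsilon)$, using the summability of $\{\Xi_k\}$ and $\{\Gamma_k\}$ already secured by H1--H3. The one bookkeeping refinement is that the paper inducts on the window index (i.e., on the claim \eqref{ass-sequence}) rather than on the iterate index $K$, because summing Proposition~\ref{prop-H4} over windows with $\ell(k)\le K$ leaves up to $m{+}1$ tail steps $\|x^{j+1}-x^j\|$ uncontrolled by the $\Gamma,\Xi$ bound; these are absorbed separately via the extra term $(b{+}1)\sum_{j=\ell(k-1)}^{\ell(k)-1}\|x^{j+1}-x^j\|$ in \eqref{key-ineq31}, which is small since $\|x^{j+1}-x^j\|\to0$.
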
 
\begin{proof}
 From $x^*\in\mathcal{P}_{\mathbb{X}}(Z^*)$, there exists $z^*\in Z^*$ such that $x^*=\mathcal{P}_{\mathbb{X}}(z^*)$. Since $z^*\in Z^*$, there exists $\mathcal{K}\subset\mathbb{N}$ such that $\lim_{\mathcal{K}\ni k\to\infty}z^k=z^*$, so that $\lim_{\mathcal{K}\ni k\to\infty}x^k=x^*$. By Proposition \ref{prop1-Phi}, we have $\lim_{\mathcal{K}\ni k\to\infty}x^{\ell(k)}=x^*$. From the proof of Proposition \ref{prop2-Phi} (i), it follows  $\Phi(x^*)=\varpi^*=\lim_{k\to\infty}\Phi(x^k)$, and there exists $k^*>m$ such that $\Phi(x^k)<\Phi(x^*)+\varepsilon$ for all $k\ge k^*$. Recall that $\sum_{k=1}^{\infty}\Gamma_k\!+\!\sum_{k=1}^{\infty}\Xi_{k}<\infty$ by Proposition \ref{prop2-Phi} (iii) and $\lim_{k\to\infty}(x^{k+1}\!-\!x^k)=0$ by Proposition \ref{prop1-Phi}. Therefore, for each $k\ge k^*$ (if necessary by increasing $k^*$), with $\widehat{\mu}:=c(\mu,\tau,a,m)$ it holds
 \begin{equation}\label{key-ineq31}
  \|x^{\ell(k^*)}\!-\!x^*\|+(b\!+\!1)\!\!\sum_{j=\ell(k-1)}^{\ell(k)-1}\!\|x^{j+1}\!-\!x^j\|+(m\!+\!1)\widehat{\mu}\bigg[\sum_{j=k^*-m-1}^{\infty}\!\!\Gamma_j+\sum_{j=k^*}^{\infty}\Xi_{j}\bigg]<\varepsilon.
 \end{equation} 
 We claim that the conclusion holds if for each $k\ge k^*$ and $i\!\in\![\ell(k\!-\!1)\!+\!1,\ell(k)\!-\!1]$, 
 \begin{equation}\label{ass-sequence}
 \Phi(x^{\ell(k)})-\Phi(x^i)\le\kappa\big[{\rm dist}(0,\partial\Phi(x^{\ell(k)}))^2+{\rm dist}(0,\partial\Phi(x^i))\|x^{\ell(k)}-x^i\|+\|x^{\ell(k)}-x^i\|^2\big].
 \end{equation}
 Indeed, fixing any $k\ge k^*$, for each $i\in[\ell(k\!-\!1)+1,\ell(k)-1]$, it holds 
 \begin{align}\label{key-ineq32}
 	\ \sqrt{\Phi(x^{\ell(k)})\!-\!\Phi(x^{i})}
 	&\stackrel{\eqref{ass-sequence}}{\le}\!\sqrt{\kappa\big[{\rm dist}(0,\partial \Phi(x^{\ell(k)}))^2+{\rm dist}(0,\partial \Phi(x^{i}))\|x^{\ell(k)}\!-\!x^i\|+\|x^{\ell(k)}\!-\!x^i\|^2\big]}\nonumber\\
 	&\le\sqrt{\kappa}\big[{\rm dist}(0,\partial \Phi(x^{\ell(k)}))+\!\sqrt{{\rm dist}(0,\partial \Phi(x^{i}))\|x^{\ell(k)}\!-\!x^i\|}+\|x^{\ell(k)}\!-\!x^i\|\big]\nonumber\\
 	&\stackrel{\eqref{dist-Theta}}{\le}\sqrt{\kappa}b\|x^{\ell(k)}\!-\!x^{\ell(k)-1}\|+\sqrt{\kappa b\|x^{i}\!-\!x^{i-1}\|\|x^{\ell(k)}\!-\!x^i\|}+\sqrt{\kappa}\|x^{\ell(k)}\!-\!x^i\|\nonumber\\
 	&\le\sqrt{\kappa}b\,\Xi_k+\tau\sqrt{a}\|x^{i}\!-\!x^{i-1}\|+ \frac{\kappa b}{4\tau\sqrt{a}}\|x^{\ell(k)}\!-\!x^i\|+\sqrt{\kappa}\|x^{\ell(k)}\!-\!x^i\|\nonumber\\
 	&=\sqrt{\kappa}b\,\Xi_k +\tau\sqrt{a}\|x^{i}\!-\!x^{i-1}\|+\frac{\kappa b+ 4\tau\sqrt{a\kappa}}{4\tau\sqrt{a}} \big\|\sum_{j=i+1}^{\ell(k)}[x^{j}-x^{j-1}]\big\|\nonumber\\
 	&\le \tau\sqrt{a}\|x^{i}\!-\!x^{i-1}\|+\frac{\kappa b+ 4\tau\sqrt{a\kappa}(b+1)}{4\tau\sqrt{a}} \sum_{j=i+1}^{\ell(k)}\|x^{j}-x^{j-1}\|,
 \end{align}
 where the penultimate inequality is got by using $\sqrt{\alpha\beta}\le\frac{1}{2}(\alpha+\beta)$ for $\alpha=2\tau\sqrt{a}\|x^{i}\!-\!x^{i-1}\|$ and $\beta=\frac{\kappa b}{2\tau\sqrt{a}}\|x^{\ell(k)}\!-\!x^i\|$, and the last one is due to $\Xi_k\le \sum_{j=i+1}^{\ell(k)}\|x^{j}-x^{j-1}\|$. The above inequality shows that condition H4 holds with $\mu=\frac{\kappa b+ 4\tau\sqrt{a\kappa}(b+1)}{4\tau\sqrt{a}}$. 
 
 Next we prove by induction that the claim \eqref{ass-sequence} holds for every $k\ge k^*$. We first consider $k=k^*$. The previous \eqref{key-ineq31} implies $x^i\in \mathbb{B}(x^*,\varepsilon)$ for each $i\in[\ell(k^*\!-\!1)+1,\ell(k^*)\!-\!1]$ by noting that 
 \begin{align*}
 \|x^{i}-x^*\|&\le\|x^{\ell(k^*)}\!-\!x^*\|
 +\sum_{j=i+1}^{\ell(k^*)}\!\!\|x^{j}\!-\!x^{j-1}\|
 \le\|x^{\ell(k^*)}\!-\!x^*\|
 +\!\!\sum_{j=\ell(k^*-1)+2}^{\ell(k^*)}\!\!\|x^{j}\!-\!x^{j-1}\|<\varepsilon.
 \end{align*}
 On the other hand, from H3 and $\Theta=\Phi$, for each $i\in[\ell(k^*\!-\!1)+1,\ell(k^*)\!-\!1]$, 
 \[
   {\rm dist}(0,\partial\Phi(x^{\ell(k^*)}))+{\rm dist}(0,\partial\Phi(x^{i}))\le b\big[\|x^{\ell(k^*)}-x^{\ell(k^*)-1}\|+\|x^{i}-x^{i-1}\|\big]\stackrel{\eqref{key-ineq31}}{<}\varepsilon.
 \]
 Recall that $\Phi(x^k)<\Phi(x^*)+\varepsilon$ for all $k\ge k^*$. Invoking inequality \eqref{ass0-ineq} with $x=x^{\ell(k^*)}$ and $z=x^{i}$ for all $i\in[\ell(k^*\!-\!1)+1,\ell(k^*)-\!1]$ shows that \eqref{ass-sequence} holds for $k=k^*$. Now assume that \eqref{ass-sequence} holds with $k=k^*,\ldots,\nu$ for some $\nu\ge k^*$. Then, from the above arguments, the inequality \eqref{key-ineq32} holds for $k=k^*,\ldots,\nu$, so condition H4 holds for $\overline{k}=k^*$ and $k=k^*,\ldots,\nu$. Thus, by Proposition \ref{prop-H4},  
 \begin{equation}\label{for-ineq32}
  \sum_{j=\ell(k^*)}^{\ell(\nu)-1}\|x^{j+1}-x^j\|\le(m\!+\!1)\widehat{\mu}\bigg[\sum_{j=k^*-m-1}^{\nu-1}\Gamma_j+\sum_{j=k^*}^{\nu}\Xi_{j}\bigg].
 \end{equation}
 Next we prove \eqref{ass-sequence} for $k=\nu+1$. Indeed, for each $i\in[\ell(\nu)+1,\ell(\nu\!+\!1)]$, 
 \begin{align*}\label{temp-ineq32}
 \|x^{i}-x^*\|&\le\|x^{\ell(k^*)}-x^*\|+\sum_{j=\ell(k^*)}^{\ell(\nu)-1}\|x^{j+1}-x^j\|+\sum_{j=\ell(\nu)}^{i-1}\!\!\|x^{j+1}-x^j\|\nonumber\\
 &\le\|x^{\ell(k^*)}-x^*\|+\sum_{j=\ell(k^*)}^{\ell(\nu)-1}\|x^{j+1}-x^j\|+\sum_{j=\ell(\nu)}^{\ell(\nu+1)-1}\!\!\|x^{j+1}-x^j\|\\
 &\stackrel{\eqref{for-ineq32}}{\le}\|x^{\ell(k^*)}\!-x^*\|+(m+1)\widehat{\mu}\bigg[\sum_{j=k^*-m-1}^{\nu-1}\!\!\Gamma_j+\sum_{j=k^*}^{\nu}\Xi_{j}\bigg]\\
 &\quad\ +\!\!\sum_{j=\ell(\nu)}^{\ell(\nu+1)-1}\!\!\|x^{j+1}\!-x^j\|\stackrel{\eqref{key-ineq31}}{<}\varepsilon.
 \end{align*}
 While from H3 and $\Theta=\Phi$, for each $i\in[\ell(\nu)+1,\ell(\nu\!+\!1)-1]$, it holds
 \[
   {\rm dist}(0,\partial\Phi(x^{\ell(\nu+1)}))+{\rm dist}(0,\partial\Phi(x^{i}))\le b\big[\|x^{\ell(\nu+1)}-x^{\ell(\nu+1)-1}\|+\|x^{i}-x^{i-1}\|\big],
 \]
 which along with \eqref{key-ineq31} implies ${\rm dist}(0,\partial\Phi(x^{\ell(\nu+1)}))+{\rm dist}(0,\partial\Phi(x^{i}))<\varepsilon$. Recall that $\Phi(x^{\nu+1})<\Phi(x^*)+\varepsilon$. Using \eqref{ass0-ineq} with $x=x^{\ell(\nu+1)}$ and $z=x^{i}$ for all $i\in[\ell(\nu)+1,\ell(\nu\!+\!1)\!-\!1]$ shows that \eqref{ass-sequence} holds for $k=\nu+1$. Thus, we finish the proof.  
 \end{proof}

 The assumption in Proposition \ref{prox-regular}, like condition H4, is complicated, but due to Proposition \ref{prop2-Phi} (ii) and $\Theta=\Phi$, surprisingly it is implied by the prox-regularity of $\Phi$ on $\mathcal{P}_{\mathbb{X}}(Z^*)$ by the following proposition. 
\begin{proposition}\label{prop-ass0}
 If $\Phi$ is prox-regular at $x^*$ for $0$, then there exist $\kappa>0$ and $\varepsilon>0$ such that \eqref{ass0-ineq} holds for any $x,y\in\mathbb{B}(x^*,\varepsilon)\cap[\Phi<\Phi(x^*)+\varepsilon]$ with ${\rm dist}(0,\partial\Phi(x))+{\rm dist}(0,\partial\Phi(y))<\varepsilon$. 
\end{proposition}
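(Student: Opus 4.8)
The plan is to invoke the defining inequality of prox-regularity (Definition~\ref{prox-reg-def}) anchored at $x$ with a least-norm subgradient, and then absorb the cross term that appears by Young's inequality. Let $\varepsilon_0>0$ and $r\ge 0$ be the constants produced by Definition~\ref{prox-reg-def} for $\Phi$ at $\overline{x}=x^*$ and $\overline{v}=0$ (recall that prox-regularity at $x^*$ for $0$ includes $0\in\partial\Phi(x^*)$). First I would set $\varepsilon:=\varepsilon_0/3$ and $\kappa:=\tfrac{1+r}{2}$, and fix arbitrary $x,y\in\mathbb{B}(x^*,\varepsilon)\cap[\Phi<\Phi(x^*)+\varepsilon]$ with ${\rm dist}(0,\partial\Phi(x))+{\rm dist}(0,\partial\Phi(y))<\varepsilon$. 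Since then ${\rm dist}(0,\partial\Phi(x))<\varepsilon<\infty$, the set $\partial\Phi(x)$ is nonempty, and being closed it contains some $u$ with $\|u\|={\rm dist}(0,\partial\Phi(x))$.

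Next I would check that the pair $(x,u)$ lies in the range where the prox-regularity estimate applies: indeed $\|(x,u)-(x^*,0)\|\le\|x-x^*\|+\|u\|<2\varepsilon<\varepsilon_0$, $(x,u)\in{\rm gph}\,\partial\Phi$, and $\Phi(x)<\Phi(x^*)+\varepsilon<\Phi(x^*)+\varepsilon_0$; moreover $y\in\mathbb{B}(x^*,\varepsilon)\subset\mathbb{B}(x^*,\varepsilon_0)$. Applying the inequality in Definition~\ref{prox-reg-def} with $(w,v)=(x,u)$ and test point $x'=y$ then yields
\[
  \Phi(y)\ \ge\ \Phi(x)+\langle u,\,y-x\rangle-\tfrac{r}{2}\|y-x\|^2 ,
\]
which rearranges to $\Phi(x)-\Phi(y)\le\langle u,\,x-y\rangle+\tfrac{r}{2}\|x-y\|^2\le\|u\|\,\|x-y\|+\tfrac{r}{2}\|x-y\|^2$.

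Finally I would use Young's inequality $\|u\|\,\|x-y\|\le\tfrac12\|u\|^2+\tfrac12\|x-y\|^2$ together with $\|u\|={\rm dist}(0,\partial\Phi(x))$ to conclude
\[
  \Phi(x)-\Phi(y)\ \le\ \tfrac12{\rm dist}(0,\partial\Phi(x))^2+\tfrac{1+r}{2}\|x-y\|^2\ \le\ \kappa\big[{\rm dist}(0,\partial\Phi(x))^2+\|x-y\|^2\big],
\]
which is exactly \eqref{ass0-ineq}, since the omitted middle term ${\rm dist}(0,\partial\Phi(y))\,\|x-y\|$ is nonnegative and only enlarges the right-hand side. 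I do not expect a genuine obstacle here; the only mildly delicate point is choosing the correct endpoint to anchor the prox-regularity bound — anchoring at $x$ with a minimal-norm subgradient is precisely what produces the squared term ${\rm dist}(0,\partial\Phi(x))^2$ — plus the elementary bookkeeping that keeps $(x,u)$ and $y$ inside the $\varepsilon_0$-neighborhoods. It is worth noting that neither the subgradient at $y$ nor the middle term of \eqref{ass0-ineq} is actually needed; they merely provide slack.
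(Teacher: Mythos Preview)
Your proposal is correct and follows essentially the same route as the paper's own proof: both pick a minimal-norm subgradient $u\in\partial\Phi(x)$, apply the prox-regularity inequality from Definition~\ref{prox-reg-def} anchored at $(x,u)$ with test point $y$, then split the cross term $\|u\|\,\|x-y\|$ by Young's inequality to arrive at $\kappa=(1+r)/2$, observing that the middle term of~\eqref{ass0-ineq} is mere slack. The only cosmetic difference is that you shrink $\varepsilon$ to $\varepsilon_0/3$ to ensure $(x,u)\in\mathbb{B}((x^*,0),\varepsilon_0)$ regardless of the product-norm convention, whereas the paper uses the prox-regularity $\varepsilon$ directly.
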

\begin{proof}
 Since $\Phi$ is prox-regular at $x^*$ for $0$, by Definition \ref{prox-reg-def}, $0\in\partial\Phi(x^*)$, and there exist $\varepsilon>0$ and $r>0$ such that, whenever $(x'',v'')\in\mathbb{B}((x^*,0),\varepsilon)\cap{\rm gph}\,\partial\Phi$ with $\Phi(x'')<\Phi(x^*)+\varepsilon$, 
 \begin{equation*}
 \Phi(x')\ge\Phi(x'')+\langle v'',x'-x''\rangle-({r}/{2})\|x'-x''\|^2\quad\ \forall x'\in\mathbb{B}(x^*,\varepsilon).
 \end{equation*}
 Pick any $x,y\in\mathbb{B}(x^*,\varepsilon)\cap[\Phi<\Phi(x^*)+\varepsilon]$ with ${\rm dist}(0,\partial\Phi(x))+{\rm dist}(0,\partial\Phi(y))<\varepsilon$. Then there exists $v\in\partial\Phi(x)$ such that $\|v\|={\rm dist}(0,\partial\Phi(x))\le\varepsilon$, so $(x,v)\in\mathbb{B}((x^*,0),\varepsilon)\cap{\rm gph}\,\partial\Phi$ with $\Phi(x)<\Phi(x^*)+\varepsilon$. Using the above inequality with $x'=y,x''=x, v''=v$ yields
 \begin{align*}
  \Phi(y)&\ge\Phi(x)+\langle v,y-x\rangle-\frac{r}{2}\|x-y\|^2\\
  &\ge\Phi(x) -{\rm dist}(0,\partial\Phi(x))\|y-x\|-\frac{r}{2}\|x-y\|^2\\
  &\ge\Phi(x)-\frac{1}{2}({\rm dist}(0,\partial\Phi(x)))^2-\frac{1+r}{2}\|x-y\|^2 \\
  & \ge \Phi(x) - \frac{1+r}2 \big[{\rm dist}(0,\partial\Phi(x))^2 + {\rm dist}(0,\partial\Phi(y)) \| x-y\|
  + \|x-y\|^2\big].
 \end{align*}
 This shows that the desired conclusion holds with $\kappa=(1\!+\!r)/2$ and $\varepsilon$.  
 \end{proof}  

\section{Applications}\label{sec4}

This section demonstrates that the proposed iterative framework H1-H4 encompasses some existing algorithms, so the full convergence of their iterate sequences can be achieved by leveraging the main results of Section \ref{sec3}. First, we take a closer look at the convergence of ${\rm NPG}_{\rm major}$ for the nonconvex and nonsmooth composite problem
\begin{equation}\label{DC-prob}
 \min_{x\in\mathbb{X}} F(x):=f(x)+g(x)-h(x),
 \end{equation}
 where $f,g\!:\mathbb{X}\to\overline{\mathbb{R}}$ and $h\!:\mathbb{X}\to\mathbb{R}$ satisfy the conditions stated in Assumption \ref{ass1}. 
 \begin{assumption}\label{ass1}
 {\bf(i)} $f$ is a proper and lsc function that is continuously differentiable on an open set $\mathcal{O}\supset{\rm dom}\,g\ne\emptyset$ with strictly continuous gradient $\nabla\!f$;  

 \noindent
 {\bf(ii)} $g$ is a proper and lsc function that is bounded from below;

 \noindent
 {\bf(iii)} $h$ is a convex function, and the function $F$ is bounded from below.  
 \end{assumption}
\subsection{Full convergence of \texorpdfstring{${\rm NPG}_{\rm major}$}{NPG\_major}}\label{sec4.1}

For model \eqref{DC-prob} with an $L$-smooth $f$, Liu et al. \cite{LiuPong19} proposed a GLL-type nonmonotone PG method with majorization (${\rm NPG}_{\rm major}$ for short), which has the same iteration steps as Algorithm \ref{NPG_major} below with $\alpha=0$ and $\partial(-h)(x^k)$ replaced by $-\partial h(x^k)$. To the best of our knowledge, the full convergence of its iterate sequence was still not established. We fill this gap by proving that the iterate sequence of Algorithm \ref{NPG_major} conforms to H1-H4. 
 \begin{algorithm}[h]
 \caption{\label{NPG_major}{\bf\,(${\rm NPG}_{\rm major}$ for solving \eqref{DC-prob})}}
 \begin{algorithmic}
 \State{Input: $m\in\mathbb{N},0<\gamma_{\rm min}\le\gamma_{\rm max}<\infty,\varrho>1,\delta\in(0,1),\alpha\in[0,1],c>0,x^0\in{\rm dom}\,g$. Set $k:=0$.}
\While{the termination condition is not satisfied}
\State{1. Choose $\gamma_{k,0}\in[\gamma_{\rm min},\gamma_{\rm max}]$}.

 \State{2. \textbf{For} $j=0,1,2,\ldots$}

 \State{\qquad\ Let $\gamma_{k,j}:=\varrho^j\gamma_{k,0}$ and pick any $\xi^k\in\partial(-h)(x^k)$. }
 
 \State{\qquad\ Compute the unique solution $x^{k,j}$ of the problem
  \begin{equation}\label{subprobk-dc}
     \min_{x\in\mathbb{X}}f(x^k)+\langle\nabla\!f(x^k)+\xi^k,x-x^k\rangle+\frac{\gamma_{k,j}}{2}\|x-x^k\|^2+g(x).
  \end{equation}}
  \State{\qquad\ If $F(x^{k,j})\le\max\limits_{[k-m]_{+}\le i\le k}\!F(x^{i})-\frac{\alpha\delta\gamma_{k,j}+(1-\alpha)c}{2}\|x^{k,j}-x^k\|^2$, then go to step 3.}
   \State{\quad\ \textbf{End}}
  
   \State{3. Let $j_k:=j,\gamma_{k}:=\gamma_{k,j_k}$ and $x^{k+1}:=x^{k,j_k}$. Set $k\leftarrow k+1$ and go to Step 1.}
 \EndWhile 
 \end{algorithmic}
 \end{algorithm}
 \begin{lemma}\label{well-defined1}
 For each $k\in\mathbb{N}$ with $x^k\ne x^{k-1}$, the inner loop of Algorithm \ref{NPG_major} stops within a finite number of steps. If there exists some $k\in\mathbb{N}$ such that $x^k=x^{k-1}$, then $x^{k}$ is a stationary point of \eqref{DC-prob}, i.e.,
 $x^k\in\mathcal{S}^*\!:=\big\{x\in\mathbb{X}\,|\,0\in\nabla\!f(x)+\partial g(x)+\partial(-h)(x)\big\}$. 
 \end{lemma}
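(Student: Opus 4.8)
The plan is to split the statement into its two assertions and handle them separately, both via the optimality conditions of the strongly convex subproblem \eqref{subprobk-dc}. Throughout, fix $k$ and abbreviate $p^k:=\nabla f(x^k)+\xi^k$ so that \eqref{subprobk-dc} reads $\min_{x}\langle p^k,x-x^k\rangle+\frac{\gamma_{k,j}}{2}\|x-x^k\|^2+g(x)+\text{const}$. Since $g$ is proper lsc and bounded below and $\gamma_{k,j}\ge\gamma_{\min}>0$, the objective is level-bounded and strongly convex relative to the quadratic term, so $x^{k,j}$ exists and is unique; the first-order condition is
\[
 0\in p^k+\gamma_{k,j}(x^{k,j}-x^k)+\partial g(x^{k,j}).
\]

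For the finite-termination claim, I would argue by contradiction: if the inner loop never stops, then $\gamma_{k,j}=\varrho^j\gamma_{k,0}\to\infty$. First I would show $\|x^{k,j}-x^k\|\to 0$ as $j\to\infty$ — this follows from the subproblem's optimal value being $\le g(x^k)$ (plug in $x=x^k$), which forces $\frac{\gamma_{k,j}}{2}\|x^{k,j}-x^k\|^2\le\langle p^k,x^k-x^{k,j}\rangle+g(x^k)-g(x^{k,j})$, and then a standard estimate using a subgradient of $g$ at $x^k$ (note $x^k\in\operatorname{dom}g$) bounds the right side linearly in $\|x^{k,j}-x^k\|$. Next, using the strict continuity (local Lipschitz) of $\nabla f$ near $x^k$ and the descent-type inequality it yields on a neighborhood, together with the subproblem optimality, I would derive
\[
 F(x^{k,j})\le \max_{[k-m]_+\le i\le k}F(x^i)-\Big(\tfrac{\gamma_{k,j}}{2}-\tfrac{L_k}{2}-\tfrac{\alpha\delta\gamma_{k,j}+(1-\alpha)c}{2}\Big)\|x^{k,j}-x^k\|^2,
\]
where $L_k$ is a local Lipschitz modulus of $\nabla f$ valid once $\|x^{k,j}-x^k\|$ is small (which holds for $j$ large), and the convexity of $h$ is used to drop the $-h$ contribution via $-h(x^{k,j})\le -h(x^k)-\langle\eta^k,x^{k,j}-x^k\rangle$ with $\eta^k\in\partial h(x^k)$, i.e. $\xi^k=-\eta^k$. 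For $j$ large enough, $(1-\alpha\delta)\gamma_{k,j}>L_k+(1-\alpha)c$, so the acceptance test in Step 2 is satisfied — contradiction. The one subtlety is the interplay between "$L_k$ is only local" and "$\|x^{k,j}-x^k\|\to 0$": I would fix the neighborhood first, get $L_k$ on it, then choose $j$ large enough that $x^{k,j}$ lands inside it.

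For the second claim, suppose $x^k=x^{k-1}$ for some $k$. Then $x^k$ was produced as $x^{k-1,j_{k-1}}$, the solution of \eqref{subprobk-dc} at iteration $k-1$, so the first-order condition gives $0\in\nabla f(x^{k-1})+\xi^{k-1}+\gamma_{k-1}(x^{k-1,j_{k-1}}-x^{k-1})+\partial g(x^{k-1,j_{k-1}})$. Since $x^{k-1,j_{k-1}}=x^k=x^{k-1}$, the proximal term vanishes, and with $\xi^{k-1}\in\partial(-h)(x^{k-1})=\partial(-h)(x^k)$ we get $0\in\nabla f(x^k)+\partial g(x^k)+\partial(-h)(x^k)$, i.e. $x^k\in\mathcal S^*$.

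The main obstacle is the finite-termination proof, specifically making the local Lipschitz estimate for $\nabla f$ rigorous under mere strict continuity rather than global $L$-smoothness — one must be careful to quantify the neighborhood of $x^k$ on which the quadratic upper bound $f(x')\le f(x^k)+\langle\nabla f(x^k),x'-x^k\rangle+\frac{L_k}{2}\|x'-x^k\|^2$ holds, and then confirm that the iterates $x^{k,j}$ of the inner loop eventually enter it. Everything else is routine manipulation of the subproblem's optimality condition and the convexity inequality for $h$.
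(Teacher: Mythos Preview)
Your approach matches the paper's almost exactly: contradiction, show $\|x^{k,j}-x^k\|\to 0$, invoke the local descent lemma for $f$ via strict continuity of $\nabla f$, absorb the $-h$ term using convexity of $h$ and $\xi^k\in -\partial h(x^k)$, then compare coefficients of $\|x^{k,j}-x^k\|^2$. One correction: to get $\|x^{k,j}-x^k\|\to 0$ you cannot appeal to a subgradient of $g$ at $x^k$, since $g$ is merely proper lsc (Assumption~\ref{ass1}(ii)) and $\partial g(x^k)$ may be empty; the paper instead uses that $g$ is bounded below, so $g(x^{k,j})\ge\inf g>-\infty$ in the optimality inequality, which together with $\gamma_{k,j}\to\infty$ forces the conclusion. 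Also, the subproblem is not strongly convex (again, $g$ is nonconvex), though this does not affect the argument for the lemma.
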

 \begin{proof}
 Suppose on the contrary that there exists some $k\in\mathbb{N}$ with $x^k\ne x^{k-1}$ such that the inner loop does not stop within a finite number of steps, i.e., 
 \begin{equation}\label{aim-ineq30}
  F(x^{k,j})>F(x^{\ell(k)})-\frac{\alpha\delta\gamma_{k,j}+(1\!-\!\alpha)c}{2}\|x^{k,j}-x^k\|^2\quad\forall j\in\mathbb{N}.
 \end{equation}
 Clearly, $\lim_{j\to\infty}\gamma_{k,j}=\infty$. For each $j\in\mathbb{N}$, from the optimality of $x^{k,j}$ to problem \eqref{subprobk-dc}, 
 \begin{equation}\label{f-ineq0}
  f(x^k)+\langle\nabla\!f(x^k)+\xi^k,x^{k,j}-x^k\rangle+\frac{\gamma_{k,j}}{2}\|x^{k,j}-x^k\|^2+g(x^{k,j})\le F(x^k)+h(x^k).
 \end{equation}
 In view of Assumption \ref{ass1} (ii),  $g(x^{k,j})\ge\inf_{x\in\mathbb{X}}g(x)>-\infty$ for all $j\in\mathbb{N}$. From \eqref{f-ineq0} and $\lim_{j\to\infty}\gamma_{k,j}=\infty$, we deduce $\lim_{j\to\infty}\|x^{k,j}-x^{k}\|=0$. Along with the strict continuity of $\nabla\!f$ at $x^k$ and the descent lemma in \cite[Lemma 5.7]{Beck17}, there exist $L_k>0$ and $\overline{j}\in\mathbb{N}$ such that 
 \begin{equation}\label{fdescent}
 f(x^{k,j})-f(x^k)-\langle\nabla\!f(x^k),x^{k,j}-x^k\rangle\le\frac{L_k}{2}\|x^{k,j}-x^k\|^2\quad\forall j\ge\overline{j}.
 \end{equation}
 On the other hand, for each $j\ge\overline{j}$, the above \eqref{f-ineq0} can be rearranged as 
 \begin{align*}
 F(x^{k,j})+\frac{\gamma_{k,j}}{2}\|x^{k,j}-x^k\|^2
 &\le f(x^{k,j})-f(x^k)-\langle\nabla\!f(x^k)+\xi^k,x^{k,j}-x^k\rangle\\
 &\quad\ +F(x^k)+h(x^k)-h(x^{k,j}).
 \end{align*}
 Recall that $\xi^k\in\partial(-h)(x^k)\subset-\partial h(x^k)$. We have $h(x^{k,j})-h(x^k)\ge\langle-\xi^k,x^{k,j}-x^k\rangle$ for all $j\in\mathbb{N}$. Together with the above two inequalities, for each $j\ge\overline{j}$, it holds
 \begin{equation*}
  F(x^{k,j})+\frac{\gamma_{k,j}}{2}\|x^{k,j}\!-\!x^k\|^2
  \le \frac{L_k}{2}\|x^{k,j}\!-\!x^k\|^2+F(x^k)\le \frac{L_k}{2}\|x^{k,j}\!-\!x^k\|^2+F(x^{\ell(k)}).
 \end{equation*}
 Combining this inequality with the above \eqref{aim-ineq30}, for any $j\ge\overline{j}$, we have 
 \[
   \frac{1}{2}[(1-\alpha\delta)\gamma_{k,j}-(1-\alpha)c-L_k]\|x^{k,j}\!-\!x^k\|^2<0.
 \]
 This is impossible because $\lim_{j\to\infty}\gamma_{k,j}=\infty$, $\delta\in(0,1)$ and $\alpha\in[0,1]$. The first part of the conclusions follows. If there is an index $k$ such that $x^{k}=x^{k-1}$, by the optimality of $x^k$ to \eqref{subprobk-dc}, $0\in\nabla\!f(x^{k-1})+\partial g(x^{k})+\partial(-h)(x^{k-1})=\nabla\!f(x^{k})+\partial g(x^{k})+\partial(-h)(x^{k})$.  
 \end{proof}

 Lemma \ref{well-defined1} implies the well-definedness of Algorithm \ref{NPG_major}. Let $\{x^k\}_{k\in\mathbb{N}}$ be the sequence produced by Algorithm \ref{NPG_major}. For each $k\in\mathbb{N}$, write $z^k:=(x^k,\xi^{k-1})$. To demonstrate that $\{x^k\}_{k\in\mathbb{N}}$ and $\{z^k\}_{k\in\mathbb{N}}$ conform to H1-H4, we need their boundedness stated below.    
\begin{lemma}\label{gammak-bound}
 Suppose that $\mathcal{L}_{F}(x^0)\!:=\{x\in\!\mathbb{X}\,|\,F(x)\le F(x^0)\}$ is bounded. Then, the sequences $\{z^k\}_{k\in\mathbb{N}}$ and $\{\gamma_k\}_{k\in\mathbb{N}}$ are bounded, so there exists $\gamma_*\!>\!0$ such that $\gamma_k\!\le\!\gamma_*$ for all $k\in\mathbb{N}$.
\end{lemma}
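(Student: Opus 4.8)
The plan is to prove the three boundedness claims in turn: $\{x^k\}_{k\in\mathbb N}$ first, then $\{\xi^{k-1}\}_{k\in\mathbb N}$ (hence $\{z^k\}_{k\in\mathbb N}$), and finally $\{\gamma_k\}_{k\in\mathbb N}$.

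\emph{Boundedness of the iterates.} The acceptance test of Step~2 of Algorithm~\ref{NPG_major}, together with $\gamma_{k,j_k}\ge\gamma_{\min}$, shows that $\{x^k\}_{k\in\mathbb N}$ satisfies condition H1 for the objective $F$ with $a=\frac12(\alpha\delta\gamma_{\min}+(1-\alpha)c)>0$. By Lemma~\ref{lemma1-Phi}~(i) the sequence $\{F(x^{\ell(k)})\}$ is nonincreasing, so $F(x^{\ell(k)})\le F(x^{\ell(0)})=F(x^0)$, and since H1 gives $F(x^k)\le F(x^{\ell(k-1)})$ for $k\ge1$ we obtain $\{x^k\}_{k\in\mathbb N}\subseteq\mathcal L_F(x^0)$. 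As $F$ is lsc and $\mathcal L_F(x^0)$ is assumed bounded, it is a compact subset of ${\rm dom}\,g\subseteq\mathcal O$; hence $\{x^k\}$ is bounded, $f,\nabla f,h$ are bounded on $\mathcal L_F(x^0)$, $g(x^k)\le F(x^0)+h(x^k)-f(x^k)$ is bounded above uniformly in $k$ (while $g$ is bounded below by Assumption~\ref{ass1}~(ii)), and by compactness together with the strict continuity of $\nabla f$ there are uniform $\varepsilon_0,L_f>0$ such that $\nabla f$ is $L_f$-Lipschitz on $\mathbb B(x,\varepsilon_0)\subseteq\mathcal O$ for every $x\in\mathcal L_F(x^0)$.

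\emph{Boundedness of $\{z^k\}$.} Since $h$ is convex and finite on $\mathbb X$, it is locally Lipschitz, so $\partial(-h)$ is locally bounded; as $\{x^k\}$ is bounded, $\bigcup_k\partial(-h)(x^k)$ is bounded, and $\xi^k\in\partial(-h)(x^k)$ gives $\sup_k\|\xi^k\|<\infty$. Thus $z^k=(x^k,\xi^{k-1})$ is bounded.

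\emph{Boundedness of $\{\gamma_k\}$.} It suffices to produce a threshold $\bar\gamma>0$, independent of $k$, such that $\gamma_{k,j}\ge\bar\gamma$ forces the acceptance test to hold: then the backtracking stops with $\gamma_k\le\max\{\gamma_{\max},\varrho\bar\gamma\}=:\gamma_*$, because either $j_k=0$ and $\gamma_k\le\gamma_{\max}$, or $\gamma_{k,j_k-1}<\bar\gamma$ and $\gamma_k=\varrho\gamma_{k,j_k-1}<\varrho\bar\gamma$. First, one controls the trial step: taking $x=x^k$ in \eqref{subprobk-dc}, as in \eqref{f-ineq0}, gives
\[
 \frac{\gamma_{k,j}}{2}\|x^{k,j}-x^k\|^2\le\|\nabla f(x^k)+\xi^k\|\,\|x^{k,j}-x^k\|+g(x^k)-\inf g ,
\]
and the right-hand coefficients are bounded uniformly in $k$ by the previous two paragraphs, so $\|x^{k,j}-x^k\|\to0$ uniformly as $\gamma_{k,j}\to\infty$; hence there is $\bar\gamma_1>0$ with $\gamma_{k,j}\ge\bar\gamma_1\Rightarrow\|x^{k,j}-x^k\|<\varepsilon_0$, placing $x^{k,j}$ and the segment from $x^k$ to it inside $\mathbb B(x^k,\varepsilon_0)\subseteq\mathcal O$, where $\nabla f$ is $L_f$-Lipschitz. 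For such $j$ the estimate \eqref{fdescent} holds with $L_k$ replaced by the uniform $L_f$, and repeating verbatim the computation in the proof of Lemma~\ref{well-defined1} — combine \eqref{fdescent} with \eqref{f-ineq0} and the convexity inequality $h(x^{k,j})-h(x^k)\ge\langle-\xi^k,x^{k,j}-x^k\rangle$ (which holds since $\xi^k\in\partial(-h)(x^k)\subseteq-\partial h(x^k)$) — yields $F(x^{k,j})\le F(x^{\ell(k)})-\frac{\gamma_{k,j}-L_f}{2}\|x^{k,j}-x^k\|^2$. The acceptance test is therefore satisfied as soon as $\gamma_{k,j}-L_f\ge\alpha\delta\gamma_{k,j}+(1-\alpha)c$, i.e. $\gamma_{k,j}\ge\frac{L_f+(1-\alpha)c}{1-\alpha\delta}=:\bar\gamma_2$ (note $1-\alpha\delta>0$ since $\alpha\le1$ and $\delta<1$). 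Taking $\bar\gamma=\max\{\bar\gamma_1,\bar\gamma_2\}$ finishes the proof. The delicate step is precisely upgrading the per-iteration modulus $L_k$ of Lemma~\ref{well-defined1} to a single $L_f$ valid along all trial segments: this requires first confining the iterates to a fixed compact subset of $\mathcal O$ and then, via the uniform smallness of $\|x^{k,j}-x^k\|$ for moderately large $\gamma_{k,j}$, checking that the trial points cannot escape a fixed compact neighborhood of that set. Everything after this confinement is the routine backtracking bound for proximal-gradient-type line searches.
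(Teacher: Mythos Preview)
Your proof is correct and uses the same ingredients as the paper's: the acceptance test forces $\{x^k\}\subset\mathcal L_F(x^0)$, local boundedness of $\partial(-h)$ handles $\{\xi^k\}$, and the combination of \eqref{f-ineq0}, the descent lemma, and the convexity of $h$ yields the key estimate $F(x^{k,j})\le F(x^{\ell(k)})-\tfrac{\gamma_{k,j}-L_f}{2}\|x^{k,j}-x^k\|^2$ once the trial point sits in a region where $\nabla f$ has a uniform Lipschitz modulus.

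The only notable difference is the packaging of the $\{\gamma_k\}$ bound. The paper argues by contradiction: assume $\gamma_k\to\infty$ along a subsequence, look at the rejected step $j_k-1$, extract a further subsequence to place $x^k$ and $x^{k,j_k-1}$ in a common compact $\Gamma\subset\mathcal O$, and reach a contradiction for large $k$. You instead give a direct threshold argument: you first observe that $\mathcal L_F(x^0)$ is compact (using that $F$ is lsc), fix a uniform $\varepsilon_0$-fattening inside $\mathcal O$ with a single Lipschitz constant $L_f$, show $\|x^{k,j}-x^k\|<\varepsilon_0$ uniformly once $\gamma_{k,j}\ge\bar\gamma_1$, and then read off an explicit $\bar\gamma$ above which acceptance is forced. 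Your route is a bit more constructive (it produces the concrete bound $\gamma_*=\max\{\gamma_{\max},\varrho\bar\gamma\}$) and sidesteps the subsequence extraction; the paper's contradiction argument is shorter to write but yields only an existential bound. Both are standard for backtracking-type line searches and equally valid here.
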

\begin{proof} 
 From the definition of $\ell(k)$ and step 2 of Algorithm \ref{NPG_major}, for each $k\in\mathbb{N}$, it holds 
 \begin{align*}
  F(x^{\ell(k+1)})&\le\max\big\{F(x^{k+1}),F(x^{\ell(k)})\}\\
   &\le\max\{F(x^{\ell(k)})-\frac{\alpha\delta\gamma_{k,j}+(1\!-\!\alpha)c}{2}\|x^{k,j}-x^k\|^2,F(x^{\ell(k)})\big\}
   \le F(x^{\ell(k)}),
 \end{align*}
 and hence $F(x^{k+1})\le F(x^{\ell(k)})\le\cdots\le F(x^0)$. This means that $\{x^k\}_{k\in\mathbb{N}}\subset\mathcal{L}_{F}(x^0)$, so  it is bounded due to the given assumption on $\mathcal{L}_{F}(x^0)$. Note that the mapping $\partial(-h)\!:\mathbb{X}\rightrightarrows\mathbb{X}$ is locally bounded by the finite convexity of $h$ and \cite[Theorem 9.13]{RW98}. Then, the sequence $\{\xi^k\}_{k\in\mathbb{N}}$ is bounded by \cite[Proposition 5.15]{RW98}, and the boundedness of $\{z^k\}_{k\in\mathbb{N}}$ follows. The rest only needs to prove that $\{\gamma_k\}_{k\in\mathbb{N}}$ is bounded. If not, there exists an index set $\mathcal{K}\subset\mathbb{N}$ such that $\lim_{\mathcal{K}\ni k\to\infty}\gamma_k=\infty$. For each $k\in\mathcal{K}$, let $\widehat{\gamma}_k\!\!:=\!\!\varrho^{-1}\gamma_k\!=\!\gamma_{k,j_k-1}$. Apparently, $\lim_{\mathcal{K}\ni k\to\infty}\!\widehat{\gamma}_k\!\!=\!\infty$. From step 2 of Algorithm \ref{NPG_major}, it follows 
 \begin{equation}\label{F-ineq40}
  F(x^{k,j_k-1})>F(x^{\ell(k)})-\frac{\alpha\delta\widehat{\gamma}_{k}+(1\!-\!\alpha)c}{2}\|x^{k,j_{k}-1}-x^k\|^2\quad\forall k\in\mathcal{K}. 
 \end{equation}
 Since $g(x^{k,j_k-1})\ge\inf_{x\in\mathbb{X}}g(x)>-\infty$, using $\lim_{\mathcal{K}\ni k\to\infty}\widehat{\gamma}_k=\infty$ and the boundedness of $\{z^k\}_{k\in\mathbb{N}}$, we deduce from \eqref{f-ineq0} for $k\in\mathcal{K}$ and $j=j_k\!-\!1$ that $\lim_{\mathcal{K}\ni k\to\infty}\|x^{k,j_k-1}\!-x^k\|=0$. Then, the boundedness of $\{x^k\}_{k\in\mathbb{N}}$ implies that of $\{x^{k,j_k-1}\}_{k\in\mathcal{K}}\subset{\rm dom}\,g\subset\mathcal{O}$. Thus, there exists a compact set $\Gamma\subset\mathcal{O}$ such that $x^{k,j_k-1}\in\Gamma$ and $x^k\in\Gamma$ for all $k\in\mathcal{K}$. Recall that $\nabla\!f$ is strictly continuous on $\mathcal{O}$ by Assumption \ref{ass1} (i), so it is Lipschitz continuous on $\Gamma$. Denote by $\widehat{L}_{\!f}$ the Lipschitz constant of $\nabla\!f$ on $\Gamma$. By the descent lemma in \cite[Lemma 5.7]{Beck17}, for each $k\in\mathcal{K}$, 
 \[
   f(x^{k,j_k-1})-f(x^k)-\langle\nabla\!f(x^k),x^{k,j_k-1}-x^k\rangle\le \frac{\widehat{L}_{\!f}}{2}\|x^{k,j_k-1}-x^k\|^2.
 \] 
 Henceforth, following the same argument as for Lemma \ref{well-defined1} and using the above \eqref{F-ineq40}, we obtain 
 \[
  \frac{(1-\alpha\delta)\widehat{\gamma}_{k}-(1-\alpha)c-\widehat{L}_{\!f}}{2}\|x^{k,j_{k}-1}-x^k\|^2<0\quad\forall k\in\mathcal{K}, 
 \]
 which is impossible by recalling $\lim_{\mathcal{K}\ni k\to\infty}\!\widehat{\gamma}_k\!\!=\!\infty,\,\alpha\in[0,1]$ and $\delta\in(0,1)$.  
 \end{proof}

 Now we are in a position to establish the full convergence of $\{x^k\}_{k\in\mathbb{N}}$ by proving that $\{x^k\}_{k\in\mathbb{N}}$ and $\{z^k\}_{k\in\mathbb{N}}$ armed with the following $\Theta$ conform to H1-H4: 
 \begin{equation}\label{def-Theta}
 \Theta(z):=f(x)+g(x)+\langle s,x\rangle+h^*(-s)\quad\forall z=(x,s)\in\mathbb{Z}:=\mathbb{X}\times\mathbb{X},  
 \end{equation} 
 where $h^*$ denotes the conjugate of $h$, i.e., $h^*(x)=\sup_{u\in\mathbb{X}}\{\langle u,x\rangle-h(u)\}$.
 \begin{theorem}\label{theorem41}
 Suppose that the function $\Theta$ defined by \eqref{def-Theta} is a KL function, and that the set $\mathcal{L}_{F}(x^0)$ is bounded. Then, $\{x^k\}_{k\in\mathbb{N}}$ converges to a point $\overline{x}\in\mathcal{S}^*$. If $\Theta$ is a KL function of exponent $\theta\in(0,1)$, there exist $\gamma>0$ and $\varrho\in(0,1)$ such that \eqref{aim-ineq-rate} holds for all $k$ large enough. 
 \end{theorem}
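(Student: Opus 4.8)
The plan is to show that the sequence $\{x^k\}_{k\in\mathbb{N}}$ produced by Algorithm \ref{NPG_major}, together with its augmented sequence $\{z^k=(x^k,\xi^{k-1})\}_{k\in\mathbb{N}}$, the merit function $\Theta$ of \eqref{def-Theta} and the choice $\Phi=F$, satisfies the four conditions H1--H4, and then to invoke Theorem \ref{KL-converge} for the full convergence and Theorem \ref{KL-rate} for the rate. Lemma \ref{well-defined1} ensures the algorithm is well defined, Lemma \ref{gammak-bound} supplies the boundedness of $\{x^k\}_{k\in\mathbb{N}}$, $\{z^k\}_{k\in\mathbb{N}}$ and $\{\gamma_k\}_{k\in\mathbb{N}}$ (say $\gamma_k\le\gamma_*$), and a standard argument (using that $F$ and $g$ are bounded below, $h$ is finite-valued and continuous, and $f$ is lsc) shows that every cluster point of $\{x^k\}_{k\in\mathbb{N}}$ lies in $\mathrm{dom}\,g\subseteq\mathcal{O}$; since $\{x^k\}_{k\in\mathbb{N}}$ is bounded with $x^{k+1}-x^k\to0$, we may then fix $\overline{k}>m$ and a compact set $\Gamma\subset\mathcal{O}$ containing the segments $[x^k,x^{k+1}]$ for all $k\ge\overline{k}$, on which $\nabla\!f$ is Lipschitz with a modulus $\widehat{L}_{\!f}$. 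Condition H1 is then immediate from Step 2 of Algorithm \ref{NPG_major} and $\gamma_k\ge\gamma_{\min}$, with $a=\tfrac12[\alpha\delta\gamma_{\min}+(1-\alpha)c]>0$. For H2, if $x^{\ell(k_j)}\to x^*$ then Lemma \ref{lemma1-Phi}(ii) gives $x^{\ell(k_j)-1}\to x^*$, so inserting the competitor $x^*$ into the optimality of $x^{\ell(k_j)}$ for \eqref{subprobk-dc} at iteration $\ell(k_j)-1$, letting $j\to\infty$ (with $\{\xi^k\}_{k\in\mathbb{N}},\{\gamma_k\}_{k\in\mathbb{N}}$ bounded) and using the lsc of $g$ yields $g(x^{\ell(k_j)})\to g(x^*)$; continuity of $f$ and $h$ near $x^*$ then gives $F(x^{\ell(k_j)})\to F(x^*)$, i.e. H2. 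With H1--H2 established, Proposition \ref{prop1-Phi} gives $x^{k+1}-x^k\to0$, $x^k-x^{\ell(k)}\to0$ and $F(x^k)\to\varpi^*$.

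For H3 we use $\Theta$ as in \eqref{def-Theta}, which is proper, lsc and assumed KL. The Fenchel--Young inequality $\langle s,x\rangle+h^*(-s)\ge-h(x)$ gives $\Theta(z)\ge F(\mathcal{P}_{\mathbb{X}}(z))$ for every $z$ (hence on $Z^*$), and in particular the left half $F(x^{k+1})\le\Theta(z^{k+1})$ of \eqref{Phi-Theta}. For the right half, $\xi^k\in\partial(-h)(x^k)\subset-\partial h(x^k)$ gives the Fenchel equality $h^*(-\xi^k)=-\langle\xi^k,x^k\rangle-h(x^k)$, so $\Theta(z^{k+1})=f(x^{k+1})+g(x^{k+1})+\langle\xi^k,x^{k+1}-x^k\rangle-h(x^k)$, and combining the descent lemma for $f$ on $\Gamma$ with the optimality of $x^{k+1}$ for \eqref{subprobk-dc} (competitor $x^k$) yields $\Theta(z^{k+1})\le F(x^k)+\tfrac{\widehat{L}_{\!f}}{2}\|x^{k+1}-x^k\|^2\le F(x^{\ell(k)})+\varsigma_k$ with $\varsigma_k:=\tfrac{\widehat{L}_{\!f}}{2}\|x^{k+1}-x^k\|^2\to0$ (the finitely many indices below $\overline{k}$ being absorbed into a larger $\varsigma_k$). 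For \eqref{dist-Theta}, one has the exact formula $\partial\Theta(x,s)=\{(\nabla\!f(x)+s+v,\,x-w):v\in\partial g(x),\,w\in\partial h^*(-s)\}$; choosing $v=-\nabla\!f(x^{k-1})-\xi^{k-1}-\gamma_{k-1}(x^k-x^{k-1})\in\partial g(x^k)$ from the first-order condition of \eqref{subprobk-dc} at iteration $k-1$ and $w=x^{k-1}\in\partial h^*(-\xi^{k-1})$ (equivalently $-\xi^{k-1}\in\partial h(x^{k-1})$), the element $(\nabla\!f(x^k)-\nabla\!f(x^{k-1})-\gamma_{k-1}(x^k-x^{k-1}),\,x^k-x^{k-1})\in\partial\Theta(z^k)$ has norm at most $b\|x^k-x^{k-1}\|$ with $b=\sqrt{(\widehat{L}_{\!f}+\gamma_*)^2+1}$ (again, finitely many early indices are harmless).

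Condition H4 follows from Lemma \ref{sequence-cond}: adding the descent lemma for $f$ on $\Gamma$, the inequality $-h(x^{k+1})\le-h(x^k)+\langle\xi^k,x^{k+1}-x^k\rangle$ from $-\xi^k\in\partial h(x^k)$, and the optimality of $x^{k+1}$ for \eqref{subprobk-dc} gives $F(x^{k+1})-F(x^k)\le\tfrac{\widehat{L}_{\!f}}{2}\|x^{k+1}-x^k\|^2$ for all $k\ge\overline{k}$, so H4 holds (with $\mu=\sqrt{\widehat{L}_{\!f}/2}$). Theorem \ref{KL-converge}, whose boundedness hypothesis on $\{z^k\}_{k\in\mathbb{N}}$ is Lemma \ref{gammak-bound}, then gives $\sum_k\|x^{k+1}-x^k\|<\infty$ and $x^k\to\overline{x}$. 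For $\overline{x}\in\mathcal{S}^*$: since $\|x^k-x^{k-1}\|\to0$, \eqref{dist-Theta} produces $w^k\in\partial\Theta(z^k)$ with $\|w^k\|\to0$, and along a subsequence $z^{k_j}\to z^*=(\overline{x},s^*)\in Z^*$ with $\Theta(z^{k_j})\to\varpi^*=\Theta(z^*)$ by Proposition \ref{prop2-Phi}(i), so outer semicontinuity of $\partial\Theta$ under $\Theta$-attentive convergence gives $0\in\partial\Theta(z^*)$; the formula for $\partial\Theta$ then forces $0\in\nabla\!f(\overline{x})+\partial g(\overline{x})+s^*$ and $s^*\in-\partial h(\overline{x})=\partial(-h)(\overline{x})$, i.e. $\overline{x}\in\mathcal{S}^*$. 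Finally, when $\Theta$ is KL of exponent $\theta\in(0,1)$, the estimate \eqref{aim-ineq-rate} is exactly the conclusion of Theorem \ref{KL-rate}.

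The main obstacle is the verification of H3 for the potential $\Theta$ of \eqref{def-Theta}: the two-sided bound \eqref{Phi-Theta} requires combining the Fenchel equality at $x^k$ with the majorization/optimality inequality of the PG step, and \eqref{dist-Theta} hinges on the exact subdifferential formula for $\Theta$ and on reading a subgradient of $g$ off the optimality condition of \eqref{subprobk-dc}. A lesser technical point is securing a uniform Lipschitz modulus $\widehat{L}_{\!f}$ along the tail of $\{x^k\}_{k\in\mathbb{N}}$, which rests on the fact that all cluster points stay inside the open set $\mathcal{O}$ on which $f$ is smooth.
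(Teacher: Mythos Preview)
Your proof follows the paper's approach almost exactly: verify H1--H4 for $\Phi=F$ and the potential $\Theta$ of \eqref{def-Theta}, then invoke Theorems \ref{KL-converge} and \ref{KL-rate}. The verifications of H1--H4 match the paper's (the paper uses the Lipschitz constant of $\nabla\!f$ on the compact set $\mathcal{L}_F(x^0)$ directly rather than building a compact $\Gamma$ containing the segments $[x^k,x^{k+1}]$, but this is cosmetic; your treatment is arguably more careful since $\mathcal{L}_F(x^0)$ need not be convex).

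There is, however, a genuine gap in your final step showing $\overline{x}\in\mathcal{S}^*$. From $0\in\partial\Theta(z^*)$ and your subdifferential formula, the $s$-component gives $\overline{x}\in\partial h^*(-s^*)$, equivalently $s^*\in-\partial h(\overline{x})$. Your asserted equality $-\partial h(\overline{x})=\partial(-h)(\overline{x})$ is false in general: for $h(t)=|t|$ at $t=0$ one has $-\partial h(0)=[-1,1]$ but $\partial(-|\cdot|)(0)=\{-1,1\}$; only the inclusion $\partial(-h)\subseteq-\partial h$ holds. Thus your argument, as written, only yields $0\in\nabla\!f(\overline{x})+\partial g(\overline{x})-\partial h(\overline{x})$, i.e.\ $\overline{x}\in\widehat{\mathcal{S}}^*$, the weaker stationarity set singled out in Remark \ref{DC_H1_4}. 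The paper avoids this by passing to the limit directly in the optimality inclusion $0\in\nabla\!f(x^{k-1})+\xi^{k-1}+\gamma_{k-1}(x^k-x^{k-1})+\partial g(x^k)$: it first shows $g(x^k)\to g(\overline{x})$ (by comparing $x^k$ with $\overline{x}$ in \eqref{subprobk-dc}) so that $\partial g$ is outer semicontinuous along the sequence, and it uses $\xi^{k-1}\in\partial(-h)(x^{k-1})$ together with the outer semicontinuity of $\partial(-h)$ (valid since $-h$ is continuous) to get $\overline{\xi}\in\partial(-h)(\overline{x})$. The easy repair in your argument is to bypass the Fenchel route for the second component and observe directly that $s^*=\lim_j\xi^{k_j-1}\in\partial(-h)(\overline{x})$ by this same outer semicontinuity.
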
 
 \begin{proof}
 In view of Theorems \ref{KL-converge} and \ref{KL-rate}, it suffices to prove that $\{x^k\}_{k\in\mathbb{N}}$ and $\{z^k\}_{k\in\mathbb{N}}$ armed with the $\Theta$ in \eqref{def-Theta} conform to H1-H4. We complete the proof by the following three steps.

 \noindent
 {\bf Step 1: to prove that $\{x^k\}_{k\in\mathbb{N}}$ satisfies H1-H2.} From step 2 of Algorithm \ref{NPG_major}, $\{x^k\}_{k\in\mathbb{N}}$ satisfies H1 for $\Phi\!=\!F$ and $a\!=\!\frac{1}{2}[\alpha\delta\gamma_{\min}+(1\!-\!\alpha)c]$. 
 Then $\lim_{k\to\infty}\|x^{\ell(k)}-x^{\ell(k)-1}\|=0$ by Lemma \ref{lemma1-Phi} (ii). To prove that H2 holds, let $\{x^{\ell(k_q)}\}_{q\in\mathbb{N}}$ be a convergent subsequence with $x^*=\lim_{q\to\infty}x^{\ell(k_q)}$. Apparently, $x^*=\lim_{q\to\infty}x^{\ell(k_q)-1}$. For each $q\in\mathbb{N}$, from the optimality condition of $x^{\ell(k_q)}$ to problem \eqref{subprobk-dc} for $k=\ell(k_q)-1$, it follows 
  \begin{align*}  &\langle\nabla\!f(x^{\ell(k_q)-1})\!+\!\xi^{\ell(k_q)-1},x^{\ell(k_q)}\!-\!x^{\ell(k_q)-1}\rangle\!+\!\frac{\gamma_{\ell(k_q)-1}}{2}\|x^{\ell(k_q)}\!-\!x^{\ell(k_q)-1}\|^2\!+\!g(x^{\ell(k_q)})\nonumber\\
  &\le\langle\nabla\!f(x^{\ell(k_q)-1})+\xi^{\ell(k_q)-1},x^*-x^{\ell(k_q)-1}\rangle+\frac{\gamma_{\ell(k_q)-1}}{2}\|x^*\!-\!x^{\ell(k_q)-1}\|^2+g(x^*). 
 \end{align*}
 Passing the limit $q\to\infty$ to the above inequality and using $\lim_{q\to\infty}\|x^{\ell(k_q)}-x^{\ell(k_q)-1}\|=0$ and Lemma \ref{gammak-bound} leads to 
  $\limsup_{q\to\infty}g(x^{\ell(k_q)})\le g(x^*)$. Along with the continuity of $f$ and $h$, it holds that
  $\limsup_{q\to\infty}F(x^{\ell(k_q)})\le F(x^*)=F(\lim_{q\to\infty}x^{\ell(k_q)})$, so H2 holds. 

 \noindent
 {\bf Step 2: to prove that $\{z^k\}_{k\in\mathbb{N}}$ satisfies H3.} By the definition of conjugate functions, $h^*(-s)+\langle s,x\rangle\ge -h(x)$ for all $(x,s)\in\mathbb{X}\times\mathbb{X}$. Together with the expression of $\Theta$ in \eqref{def-Theta}, 
 \[
  \Theta(x,s)\ge F(x)\quad\ {\rm for\ all}\ (x,s)\in\mathbb{X}\times\mathbb{X}. 
 \]
 Thus, it suffices to prove \eqref{Phi-Theta} and \eqref{dist-Theta} for $\Phi=F$ and $\Theta$ in \eqref{def-Theta}. Fix any $k\in\mathbb{N}$. Note that
  \begin{align}\label{temp-Fineq1}
   F(x^{k+1})&\le f(x^{k+1})+g(x^{k+1})-h(x^k)+\langle\xi^k,x^{k+1}-x^k\rangle\nonumber\\
   &=f(x^{k+1})+g(x^{k+1})+h^*(-\xi^k)+\langle\xi^k,x^{k+1}\rangle=\Theta(z^{k+1}),
  \end{align}
  where the inequality is due to $-\xi^k\in\partial h(x^k)$ and the convexity of $h_2$, and the first equality follows \cite[Theorem 23.5]{Roc70}. Since $\mathcal{L}_{F}(x^0)$ is compact, from Assumption \ref{ass1} (i) we conclude that $\nabla\!f$ is Lipschitz continuous on $\mathcal{L}_{F}(x^0)$. Denote by $L_{\!f}$ the Lipschitz constant of $\nabla\!f$ on $\mathcal{L}_{F}(x^0)$. From $\{x^k\}_{k\in\mathbb{N}}\subset\mathcal{L}_{F}(x^0)$ and the descent lemma,
  \begin{align}\label{ThetaFk}
   \Theta(z^{k+1})&\stackrel{\eqref{temp-Fineq1}}{=}f(x^{k+1})+g(x^{k+1})-h(x^k)+\langle\xi^k,x^{k+1}-x^k\rangle\nonumber\\
   &\le f(x^k)+\langle \nabla f(x^k),x^{k+1}-x^k\rangle+\frac{L_f}{2}\|x^{k+1}-x^k\|^2\nonumber\\
   &\quad\ +\langle\xi^k,x^{k+1}-x^k\rangle+g(x^{k+1})-h(x^k)\nonumber\\
   &\le F(x^k)\!+\!\frac{L_f-\gamma_k}{2}\|x^{k+1}-x^k\|^2
   \!\le\! F(x^{\ell(k)})\!+\!\frac{L_f}{2}\|x^{k+1}-x^k\|^2,
  \end{align}
 where the second inequality is due to the optimality of $x^{k+1}$ to \eqref{subprobk-dc}. Combining \eqref{ThetaFk} with \eqref{temp-Fineq1}, we conclude that \eqref{Phi-Theta} holds with $\varsigma_k=\frac{L_{\!f}}{2}\|x^{k+1}-x^k\|^2$, where $\lim_{k\to\infty}\varsigma_k=0$ follows Proposition \ref{prop1-Phi}. On the other hand, from the optimality condition of $x^{k}$ to \eqref{subprobk-dc}, it follows
  \begin{equation}\label{DCxkoptcond}
   0\in\nabla\!f(x^{k-1})+\xi^{k-1}+\gamma_{k-1}(x^{k}-x^{k-1})+\partial g(x^{k}).
  \end{equation}
  Together with the expression of $\Theta$, it holds  
  \begin{align*}
  &\begin{pmatrix}
   \nabla\!f(x^k)-\nabla\!f(x^{k-1})-\gamma_{k-1}(x^k-x^{k-1})\\
    x^{k} -x^{k-1}
    \end{pmatrix}\in\partial\Theta(z^k)
  \end{align*}
  which, by the Lipschitz continuity of $\nabla\!f$ on $\mathcal{L}_{F}(x^0)$, implies \eqref{dist-Theta} for $b=1+L_{\!f}+\gamma_*$. 

 \noindent
 {\bf Step 3: to prove that $\{x^k\}_{k\in\mathbb{N}}$ satisfies H4.} The second inequality in \eqref{ThetaFk} along with \eqref{temp-Fineq1} implies that $F(x^{k+1})\le F(x^k)+\frac{L_{f}}{2}\|x^{k+1}-x^k\|^2$ for all $k\in\mathbb{N}$. The sequence $\{F(x^k)\}_{k\in\mathbb{N}}$ then satisfies the assumption of Lemma \ref{sequence-cond} with $\overline{b}=L_{\!f}$, and condition H4 holds by Lemma \ref{sequence-cond}. 
 
 Now the conclusions hold with $\overline{x}\in\mathcal{P}_{\mathbb{X}}(Z^*)$. To achieve the inclusion $\overline{x}\in\mathcal{S}^*$, we first claim that $\lim_{k\to\infty}g(x^k)=g(\overline{x})$. Indeed, from the optimality condition of $x^{k}$ to \eqref{subprobk-dc}, 
 \begin{align*}  &\langle\nabla\!f(x^{k-1})+\xi^{k-1},x^{k}\!-\!x^{k-1}\rangle+\frac{\gamma_{k-1}}{2}\|x^{k}\!-\!x^{k-1}\|^2+g(x^{k})\nonumber\\
  &\le\langle\nabla\!f(x^{k-1})+\xi^{k-1},\overline{x}-x^{k-1}\rangle+\frac{\gamma_{k-1}}{2}\|\overline{x}\!-\!x^{k-1}\|^2+g(\overline{x}). 
 \end{align*} 
 Passing the limit $k\to\infty$ to this inequality leads to $\limsup_{k\to\infty}g(x^k)\le g(\overline{x})$. Along with the lower semicontinuity of $g$, the claimed $\lim_{k\to\infty}g(x^k)=g(\overline{x})$ holds. In view of \cite[Proposition 8.7]{RW98}, the mapping $\partial g$ is outer semicontinuous.
 From $\overline{x}\in\mathcal{P}_{\mathbb{X}}(Z^*)$, there exists $\overline{\xi}\in\mathbb{X}$ such that $\overline{z}=(\overline{x},\overline{\xi})\in Z^*$, so there exists $\mathcal{K}\subset\mathbb{N}$ such that $\lim_{\mathcal{K}\ni k\to\infty}z^k=\overline{z}$. Clearly, $\overline{\xi}=\lim_{\mathcal{K}\ni k\to\infty}\xi^{k-1}$. Recall that $\xi^{k-1}\in\partial(-h)(x^{k-1})$ for each $k\in\mathcal{K}$. The outer semicontinuity of $\partial(-h)$ implies $\overline{\xi}\in\partial(-h)(\overline{x})$.
 Passing the limit $\mathcal{K}\ni k\to\infty$ to \eqref{DCxkoptcond} and using Lemma \ref{gammak-bound} and the outer semicontinuity of $\partial g$ leads to $0\in\nabla\!f(\overline{x})+\partial(-h)(\overline{x})+\partial g(\overline{x})$, i.e., $\overline{x}\in\mathcal{S}^*$.  
\end{proof}
 \begin{remark}\label{DC_H1_4}
 The parameter $\alpha\in[0,1]$ is introduced into Algorithm \ref{NPG_major} just for recovering the original ${\rm NPG}_{\rm major}$. When $\xi^k\in\partial(-h)(x^k)$ in step 2 is replaced with $\xi^k\in -\partial h(x^k)$, from the proofs of Lemmas \ref{well-defined1}-\ref{gammak-bound} and Theorem \ref{theorem41}, we notice that their conclusions still hold except that $\overline{x}\in S^*$ in Theorem \ref{theorem41} is replaced by $\overline{x}\in \widehat{S}^*:=\big\{x\in\mathbb{X}\,|\,0\in\nabla\!f(x)+\partial g(x)-\partial h(x)\big\}$. Thus, we provide the full convergence certificate for ${\rm NPG}_{\rm major}$ proposed by Liu et al. \cite{LiuPong19}.   
 \end{remark}
 \subsection{Full convergence of PGenls and PGnls}\label{sec4.2}
  
 We focus on the convergence of a GLL-type nonmonotone line search PG method with extrapolation (PGenls) for solving  \eqref{DC-prob} with $h\equiv 0$. Its iteration steps are described in Algorithm \ref{PGenls} where, for a given $\delta\ge 0$, the function $F_{\delta}$ is defined as
 \begin{equation}\label{Psidelta}
 F_{\delta}(z):=F(x)+({\delta}/{2})\|x-u\|^2
 \quad{\rm for}\ z:=(x,u)\in\mathbb{X}\times\mathbb{X}.
 \end{equation}
 Algorithm \ref{PGenls} with $\delta=0$ and $\beta_{\rm max}=0$ is precisely the GLL-type nonmonotone line-search PG method (PGnls) proposed in \cite{Kanzow22}. Notice that Algorithm \ref{PGenls} is different from the PGenls proposed by Yang \cite{Yang24} for solving \eqref{DC-prob} with $h\equiv 0$ and an $L_{\!f}$-smooth $f$, since the latter also requires $\gamma_{k,j}$ to be a variable of $F_{\delta}$ so that the stop condition of the inner loop involves an additional term $\frac{\gamma_{\ell(k)-1}}{4}\|x^{\ell(k)}-x^{\ell(k)-1}\|^2$. For Yang's PGenls,  only the convergence rate of the objective value sequence was achieved for the monotone case and the case without extrapolation by assuming that $F$ is a KL function of exponent $\theta$ and continuous relative to its domain. 
 \begin{algorithm}[h]
 \caption{\label{PGenls}{\bf\,(Nonmonotone line search PG with extrapolation)}}
 \begin{algorithmic} 
 \State{Input: $m\in\!\mathbb{N},\delta\ge 0,\alpha\in(0,1),0<\!\gamma_{\rm min}\le\gamma_{\rm max}<\infty,
 \beta_{\rm max}\in[0,1]$, $\varrho>1$ and $\nu\in(0,1/\varrho)$.
 Choose $x^0\in{\rm dom}\,g$. Let $x^{-1}=x^0,z^0=(x^0,x^{-1})$ and set $k:=0$.}
 \While{ the termination condition is not satisfied}
 \State{1. Choose $\beta_{k,0}\in[0,\beta_{\rm max}]$ and $\gamma_{k,0}\in[\gamma_{\rm min},\gamma_{\rm max}]$.}

 \State{2. \textbf{For} $j=0,1,2,\ldots$ \textbf{do}}

 \State{\qquad\ Let $\beta_{k,j}=\beta_{k,0}\nu^{j},\,\gamma_{k,j}=\gamma_{k,0}\varrho^{j}$ and
              $y^{k,j}=x^k+\beta_{k,j}(x^k\!-\!x^{k-1})$.}

 \State{\qquad\  Seek an optimal solution $x^{k,j}$ of the optimization problem
  \begin{equation}\label{subprobk-PG}
  \min_{x\in\mathbb{X}}f(y^{k,j})+\langle\nabla\!f(y^{k,j}),x-y^{k,j}\rangle+\frac{\gamma_{k,j}}{2}\|x-y^{k,j}\|^2+g(x).
  \end{equation}}
 \State{\qquad\ If $F_{\delta}(z^{k,j})\le\max\limits_{[k-m]_{+}\le i\le k} F_{\delta}(z^{i})\!-\!\frac{\alpha\gamma_{k,j}}{2}\|x^{k,j}\!-\!x^k\|^2-\frac{\alpha\delta}{2}\|x^{k}\!-\!x^{k-1}\|^2$ with}
 \State{\qquad\ $z^{k,j}\!:=(x^{k,j},x^k)$, then go to step 3.} 
  
  \State{\quad\ \textbf{End}}

  \State{3. Let $j_k\!:=j,\beta_{k}\!:=\beta_{k,j_k},\gamma_{k}\!:=\gamma_{k,j_k}, x^{k+1}\!:=x^{k,j_k},y^{k+1}\!:=y^{k,j_k},z^{k+1}\!:=z^{k,j_k}$.}

  \State{4. Set $k\leftarrow k+1$ and go to step 1.}
 \EndWhile
 \end{algorithmic}
 \end{algorithm}
\begin{lemma}\label{well-defined2}
 For each $k\in\mathbb{N}$ with $x^k\!\ne\!x^{k-1}$, the inner loop of Algorithm \ref{PGenls} stops within a finite number of steps, and consequently Algorithm \ref{PGenls} is well defined.  
 \end{lemma}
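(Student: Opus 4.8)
The plan is to argue by contradiction, in the spirit of Lemma~\ref{well-defined1} but with the extra care demanded by the extrapolation. Fix $k$ with $x^k\ne x^{k-1}$ and suppose the inner loop never stops, i.e.
\[
 F_{\delta}(z^{k,j})>\max_{[k-m]_{+}\le i\le k}F_{\delta}(z^{i})-\tfrac{\alpha\gamma_{k,j}}{2}\|x^{k,j}-x^k\|^2-\tfrac{\alpha\delta}{2}\|x^{k}-x^{k-1}\|^2\qquad\text{for all }j\in\mathbb{N}.
\]
Since $\gamma_{k,j}=\gamma_{k,0}\varrho^{j}\to\infty$ and $\beta_{k,j}=\beta_{k,0}\nu^{j}\to0$, we have $y^{k,j}=x^k+\beta_{k,j}(x^k-x^{k-1})\to x^k$. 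Comparing the objective of \eqref{subprobk-PG} at its minimizer $x^{k,j}$ with its value at $x=x^k$ (the natural test point, since $y^{k,j}$ need not belong to ${\rm dom}\,g$) and using that $g$ is bounded below (Assumption~\ref{ass1}(ii)), one sees that $\gamma_{k,j}\|x^{k,j}-y^{k,j}\|^2$ stays bounded, hence $x^{k,j}\to x^k$; running the same inequality once more, and invoking $\nu\in(0,1/\varrho)$ so that $\gamma_{k,j}\beta_{k,j}^2=\gamma_{k,0}\beta_{k,0}^2(\varrho\nu^2)^j\to0$, improves this to $\gamma_{k,j}\|x^{k,j}-y^{k,j}\|^2\to0$, whence $\gamma_{k,j}\|x^{k,j}-x^k\|^2\to0$ and $g(x^{k,j})\to g(x^k)$, so that $F(x^{k,j})\to F(x^k)$ by continuity of $f$.

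Next I would bring in the descent lemma. As $x^k\in{\rm dom}\,g\subset\mathcal{O}$ with $\mathcal{O}$ open and $\nabla\!f$ strictly continuous (Assumption~\ref{ass1}(i)), $\nabla\!f$ is Lipschitz with some modulus $L_k$ near $x^k$; applying \cite[Lemma~5.7]{Beck17} at both $x^{k,j}$ and $x^k$ and combining with the optimality of $x^{k,j}$ in \eqref{subprobk-PG} tested against $x=x^k$ gives, for all large $j$,
\[
 F(x^{k,j})\le F(x^k)+\epsilon_j-\tfrac{\gamma_{k,j}-L_k}{2}\|x^{k,j}-y^{k,j}\|^2,
\]
where $\epsilon_j$ collects the extrapolation-induced terms (multiples of $\beta_{k,j}$ and of $\gamma_{k,j}\beta_{k,j}^2\|x^k-x^{k-1}\|^2$), all $o(1)$ thanks to $\beta_{k,j}\to0$ and $\gamma_{k,j}\beta_{k,j}^2\to0$. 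Feeding this into the failed acceptance test, bounding the window maximum below by $F_{\delta}(z^k)=F(x^k)+\tfrac{\delta}{2}\|x^k-x^{k-1}\|^2$, discarding the nonpositive term, and passing to the limit $j\to\infty$ yields $\tfrac{(1-\alpha)\delta}{2}\|x^k-x^{k-1}\|^2\le0$; since $\alpha\in(0,1)$, this is impossible when $\delta>0$ and $x^k\ne x^{k-1}$.

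It remains to dispose of the degenerate situation $\delta=0$ (where $F_{\delta}=F$ and the first-order limit above only tells us that the window maximum equals $F(x^k)$). Here I would refine the estimate: $x^{k,j}\ne x^k$ for every $j$ (else the test would already pass because $\max_i F(x^i)\ge F(x^k)$), and splitting $\|x^{k,j}-x^k\|^2\le(1+\epsilon)\|x^{k,j}-y^{k,j}\|^2+(1+\epsilon^{-1})\beta_{k,j}^2\|x^k-x^{k-1}\|^2$ with $\epsilon$ small enough that $\alpha(1+\epsilon)<1$, the coefficient $\tfrac12[(1-\alpha(1+\epsilon))\gamma_{k,j}-L_k]$ of $\|x^{k,j}-y^{k,j}\|^2$ eventually dominates the leftover $O(\gamma_{k,j}\beta_{k,j}^2)$ quantities, forcing a contradiction for large $j$; when in addition $\beta_{\rm max}=0$ one has $y^{k,j}=x^k$ and the argument collapses to that of Lemma~\ref{well-defined1}. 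The main obstacle throughout is precisely this book-keeping of the terms created by $y^{k,j}\ne x^k$ --- one cannot lean on $F(y^{k,j})$ since $y^{k,j}$ may leave ${\rm dom}\,g$ --- and it is the coupling $\nu<1/\varrho$ of the extrapolation decay with the stepsize growth that renders them negligible. Once the inner loop is known to terminate whenever $x^k\ne x^{k-1}$, Algorithm~\ref{PGenls} is well defined, since at each outer iteration either $x^k\ne x^{k-1}$ or $x^k=x^{k-1}$, and the latter case (with $y^{k,j}=x^k$) is again covered by the reasoning of Lemma~\ref{well-defined1}.
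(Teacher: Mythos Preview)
Your argument follows the same contradiction strategy as the paper --- combine the optimality of $x^{k,j}$ in \eqref{subprobk-PG} (tested at $x=x^k$), the local descent lemma for $f$ near $x^k$, and the failed acceptance test --- so the core is correct. The paper organizes the endgame differently: rather than pass to limits and split on $\delta$, it rewrites everything in terms of $\|x^{k,j}-x^k\|^2$ and $\|x^k-x^{k-1}\|^2$ to reach the single inequality
\[
\frac{(1-\alpha)\gamma_{k,j}-L_k-\delta-1}{2}\,\|x^{k,j}-x^k\|^2
+\Big[\frac{(1-\alpha)\delta}{2}-(L_k+\gamma_{k,j})^2\beta_{k,j}^2\Big]\,\|x^k-x^{k-1}\|^2<0,
\]
and then notes (using $\gamma_{k,j}\beta_{k,j}\to0$) that both bracketed coefficients are eventually nonnegative, contradicting the strict sign. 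Your limit argument is arguably slicker for $\delta>0$, since you avoid the algebraic juggling with $\|u\|\|v\|\le\frac12\|u\|^2+\|v\|^2$; the price is the extra case analysis. One caution on your $\delta=0$ refinement: after your Young-type split you only obtain an inequality of the form $\gamma_{k,j}\|x^{k,j}-y^{k,j}\|^2=O(\gamma_{k,j}\beta_{k,j}^2)$, which is consistent rather than contradictory, so the ``eventually dominates'' claim is not justified as written (the paper's displayed inequality has the same soft spot when $\delta=0$ and $\beta_{\max}>0$, since the second bracket is then strictly negative); your reduction to Lemma~\ref{well-defined1} when $\beta_{\max}=0$ is fine.
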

 \begin{proof}
 Suppose on the contrary that there exists some $k\in\mathbb{N}$ with $x^k\ne x^{k-1}$ such that the inner loop does not stop within a finite number of steps, i.e.,
 \begin{equation}\label{aim-ineq41}
 F_{\delta}(z^{k,j})>F_{\delta}(z^{\ell(k)})-\frac{\alpha\gamma_{k,j}}{2}\|x^{k,j}-x^k\|^2-\frac{\alpha\delta}{2}\|x^{k}-x^{k-1}\|^2\quad\ \forall j\in\mathbb{N}.
 \end{equation}
 Clearly, $\lim_{j\to\infty}\beta_{k,j}=0$ and $\lim_{j\to\infty}\gamma_{k,j}=\infty$. Furthermore, from $\nu\in(0,1/\varrho)$, we have $\lim_{j\to\infty}\gamma_{k,j}\beta_{k,j}=0$. For each $j\in\mathbb{N}$, from the optimality of $x^{k,j}$ to \eqref{subprobk-PG}, it holds 
 \begin{align}\label{optim-xkj}
  &\quad\langle\nabla\!f(y^{k,j}),x^{k,j}-y^{k,j}\rangle+\frac{\gamma_{k,j}}{2}\|x^{k,j}-y^{k,j}\|^2+g(x^{k,j})\nonumber\\
  &\le \langle\nabla\!f(y^{k,j}),x^{k}-y^{k,j}\rangle+\frac{\gamma_{k,j}}{2}\|x^{k}-y^{k,j}\|^2+g(x^{k})\nonumber\\
  &=\langle\nabla\!f(y^{k,j}),x^{k}-y^{k,j}\rangle+\frac{\gamma_{k,j}\beta_{k,j}^2}{2}\|x^{k}-x^{k-1}\|^2+g(x^{k}),
 \end{align}
 which, along with the expression of $F$, can be rearranged as follows 
 \begin{align}\label{Psidkval}
  F(x^{k,j})-F(x^{k})&\le f(x^{k,j})-f(x^k)-\langle\nabla\!f(x^k),x^{k,j}-x^{k}\rangle+\frac{\gamma_{k,j}}{2}\|x^{k}-y^{k,j}\|^2\nonumber\\
  &\quad +\langle\nabla\!f(x^{k})-\nabla\!f(y^{k,j}),x^{k,j}-x^{k}\rangle-\frac{\gamma_{k,j}}{2}\|x^{k,j}-y^{k,j}\|^2.
 \end{align} 
 Recall that $y^{k,j}=x^k+\beta_{k,j}(x^k\!-\!x^{k-1})$ and $\lim_{j\to\infty}\beta_{k,j}=0$, so $\lim_{j\to\infty}y^{k,j}=x^k$. 
 Since $g$ is bounded from below, using $\lim_{j\to\infty}\gamma_{k,j}=\infty$ and $\lim_{j\to\infty}\beta_{k,j}=\lim_{j\to\infty}\gamma_{k,j}\beta_{k,j}=0$, we deduce from \eqref{optim-xkj} that
 $\lim_{j\to\infty}\|x^{k,j}-y^{k,j}\|=0$. Along with $\lim_{j\to\infty}y^{k,j}=x^k$, it holds that 
 $\lim_{j\to\infty}x^{k,j}=x^{k}$.
 Since $\nabla\!f$ is strictly continuous at $x^k$, there exists $L_k>0$ such that for all sufficiently large $j$, 
 \begin{align*}
 &\|\nabla f(x^k)-\nabla f(y^{k,j})\|\le L_k\|x^k-y^{k,j}\|,\\
 &f(x^{k,j})-f(x^{k})-\langle\nabla\!f(x^{k}),x^{k,j}-x^{k}\rangle\le \frac{L_k}{2}\|x^{k,j}-x^{k}\|^2.
 \end{align*}
 Substituting the above two inequalities into  \eqref{Psidkval}, for all sufficiently large $j$, it holds
 \begin{align*}
 F(x^{k,j})-F(x^k)&\le\frac{L_k}{2}\|x^{k,j}-x^{k}\|^2+L_k\|x^k\!-\!y^{k,j}\|\|x^{k,j}\!-\!x^k\|\\
 &\quad -\frac{\gamma_{k,j}}{2}\big[\|x^{k,j}-y^{k,j}\|^2-\|x^k-y^{k,j}\|^2\big]\\
 &=\frac{L_k}{2}\|x^{k,j}-x^{k}\|^2+L_k\|x^k\!-\!y^{k,j}\|\|x^{k,j}\!-\!x^k\|\\
 &\quad -\frac{\gamma_{k,j}}{2}\big[\|x^{k,j}-x^{k}\|^2+2\langle x^{k,j}-x^k,x^k-y^{k,j}\rangle\big]\\
 &\le \frac{L_k-\gamma_{k,j}}{2}\|x^{k,j}\!-\!x^{k}\|^2+(L_k\!+\!\gamma_{k,j})\|x^{k}\!-\!x^{k,j}\|\|x^{k}\!-\!y^{k,j}\|\\
 &\le -\frac{\gamma_{k,j}\!-\!1\!-\!L_k}{2}\|x^{k,j}\!-\!x^{k}\|^2+(L_k\!+\!\gamma_{k,j})^2\|x^{k}\!-\!y^{k,j}\|^2\\
 &=-\frac{\gamma_{k,j}\!-\!1\!-\!L_k}{2}\|x^{k,j}\!-\!x^{k}\|^2+[(L_k\!+\!\gamma_{k,j})\beta_{k,j}]^2\|x^{k}\!-\!x^{k-1}\|^2,
 \end{align*}
 where the third inequality is obtained by using $\|u\|\|v\|\le\frac{1}{2}\|u\|^2+\|v\|^2$ for $u=x^k-x^{k,j}$ and $v=(L_k\!+\!\gamma_{k,j})(x^k-y^{k,j})$. 
 Then, from the expression of $F_{\delta}$, for  sufficiently large $j$, 
 \begin{align*}
  F_{\delta}(z^{k,j})-F_{\delta}(z^k)\le-\frac{\gamma_{k,j}\!-\!1\!-\!L_k-\delta}{2}\|x^{k,j}-x^k\|^2
  +\big[(L_k\!+\!\gamma_{k,j})^2\beta_{k,j}^2-\frac{\delta}{2}\big]\|x^{k}\!-\!x^{k-1}\|^2,
 \end{align*}
 which together with the above \eqref{aim-ineq41} implies that 
 \begin{align*}
 \frac{(1\!-\!\alpha)\gamma_{k,j}\!-\!L_k\!-\!\delta\!-\!1}{2}\|x^{k,j}\!-\!x^k\|^2
 +\big[\frac{(1\!-\!\alpha)\delta}{2}\!-\!(L_k\!+\!\gamma_{k,j})^2\beta_{k,j}^2\big]\|x^k\!-\!x^{k-1}\|^2\!<\! 0.
 \end{align*}
 Recall that $\alpha\in(0,1)$, $\lim_{j\to\infty}\gamma_{k,j}=\infty,\lim_{j\to\infty}\beta_{k,j}=0$ and $\lim_{j\to\infty}\gamma_{k,j}\beta_{k,j}$ $=0$. When $j$ is sufficiently large, it is impossible for the above inequality to hold. The proof is finished.  
\end{proof}

Let $\{z^k\}_{k\in\mathbb{N}}$ be the sequence of Algorithm \ref{PGenls}. By step 2 of Algorithm \ref{PGenls}, following the proof of Lemma \ref{gammak-bound} leads to $\{z^k\}_{k\in\mathbb{N}}\subset\mathcal{L}_{F_{\delta}}(z^0)\!:=\{z\in\!\mathbb{X}\times\mathbb{X}\,|\,F_{\delta}(z)\le F_{\delta}(z^0)\}$. Now if $\mathcal{L}_{F_{\delta}}(z^0)$ is a bounded set, then $\{z^k\}_{k\in\mathbb{N}}$ is bounded, so is $\{\gamma^k\}_{k\in\mathbb{N}}$ by Lemma \ref{PGenls-gamkbound}. 
\begin{lemma}\label{PGenls-gamkbound}
 Suppose that $\mathcal{L}_{F_{\delta}}(z^0)$ is bounded. Then the sequences $\{z^k\}_{k\in\mathbb{N}}$ and $\{\gamma_k\}_{k\in\mathbb{N}}$ are bounded, so there exists $\gamma_{*}\!>\!0$ such that $\gamma_k\!\le\!\gamma_{*}$ for all $k\in\mathbb{N}$.
\end{lemma}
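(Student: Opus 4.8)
The plan is to follow the template of Lemma~\ref{gammak-bound}, now tailored to the extrapolated scheme, in two stages. For the boundedness of $\{z^k\}_{k\in\mathbb{N}}$ I would argue exactly as in the text preceding the statement: from step~2 of Algorithm~\ref{PGenls} and the definition of $\ell(k)$ one gets $F_{\delta}(z^{\ell(k+1)})\le\max\{F_{\delta}(z^{k+1}),F_{\delta}(z^{\ell(k)})\}\le F_{\delta}(z^{\ell(k)})$, hence $F_{\delta}(z^{k+1})\le F_{\delta}(z^{\ell(k)})\le\cdots\le F_{\delta}(z^0)$, so $\{z^k\}_{k\in\mathbb{N}}\subset\mathcal{L}_{F_{\delta}}(z^0)$, which is bounded by hypothesis. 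Since $z^k=(x^k,x^{k-1})$, this already yields the boundedness of $\{x^k\}_{k\in\mathbb{N}}$ (and of its shift $\{x^{k-1}\}_{k\in\mathbb{N}}$), which is all that is needed below.

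For $\{\gamma_k\}_{k\in\mathbb{N}}$ I would argue by contradiction. If it is unbounded, pick $\mathcal{K}\subset\mathbb{N}$ with $\lim_{\mathcal{K}\ni k\to\infty}\gamma_k=\infty$; then $j_k\ge1$ for all large $k\in\mathcal{K}$ (because $\gamma_{k,0}\le\gamma_{\max}$), and the pair $\widehat{\gamma}_k:=\gamma_{k,j_k-1}=\varrho^{-1}\gamma_k$, $\widehat{\beta}_k:=\beta_{k,j_k-1}$ satisfies $\widehat{\gamma}_k\to\infty$, $\widehat{\beta}_k\to0$ and, crucially because $\nu\varrho<1$, $\widehat{\gamma}_k\widehat{\beta}_k\to0$. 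Failure of the inner test at $j=j_k-1$ together with $F_{\delta}(z^{\ell(k)})\ge F_{\delta}(z^k)$ gives
\[
 F_{\delta}(z^{k,j_k-1})-F_{\delta}(z^k)>-\tfrac{\alpha\widehat{\gamma}_k}{2}\|x^{k,j_k-1}-x^k\|^2-\tfrac{\alpha\delta}{2}\|x^k-x^{k-1}\|^2 .
\]
From the optimality of $x^{k,j_k-1}$ in \eqref{subprobk-PG} (i.e.\ \eqref{optim-xkj} with $j=j_k-1$) and the lower boundedness of $g$, the relations $\widehat{\gamma}_k\to\infty$, $\widehat{\beta}_k\to0$, $\widehat{\gamma}_k\widehat{\beta}_k\to0$ force $\|x^{k,j_k-1}-y^{k,j_k-1}\|\to0$; since $y^{k,j_k-1}=x^k+\widehat{\beta}_k(x^k-x^{k-1})$ with $\{x^k\}$ bounded and $\widehat{\beta}_k\to0$, also $\|y^{k,j_k-1}-x^k\|\to0$ and hence $\|x^{k,j_k-1}-x^k\|\to0$ along $\mathcal{K}$. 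Thus $x^k,y^{k,j_k-1},x^{k,j_k-1}$ eventually lie in one compact $\Gamma\subset\mathcal{O}$, on which $\nabla\!f$ is Lipschitz with some constant $\widehat{L}_{\!f}$ by Assumption~\ref{ass1}(i). Re-running the chain of inequalities in the proof of Lemma~\ref{well-defined2} with $L_k$ replaced by $\widehat{L}_{\!f}$ gives, for large $k\in\mathcal{K}$,
\[
 F_{\delta}(z^{k,j_k-1})-F_{\delta}(z^k)\le-\tfrac{\widehat{\gamma}_k-1-\widehat{L}_{\!f}-\delta}{2}\|x^{k,j_k-1}-x^k\|^2+\big[(\widehat{L}_{\!f}+\widehat{\gamma}_k)^2\widehat{\beta}_k^2-\tfrac{\delta}{2}\big]\|x^k-x^{k-1}\|^2 ,
\]
and combining the last two displays produces
\[
 \tfrac{(1-\alpha)\widehat{\gamma}_k-1-\widehat{L}_{\!f}-\delta}{2}\|x^{k,j_k-1}-x^k\|^2+\big[\tfrac{(1-\alpha)\delta}{2}-(\widehat{L}_{\!f}+\widehat{\gamma}_k)^2\widehat{\beta}_k^2\big]\|x^k-x^{k-1}\|^2<0 ,
\]
which is impossible once $k\in\mathcal{K}$ is large, since $\alpha\in(0,1)$, $\widehat{\gamma}_k\to\infty$ and $(\widehat{L}_{\!f}+\widehat{\gamma}_k)^2\widehat{\beta}_k^2\to0$. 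This contradiction yields $\gamma_k\le\gamma_*$ for some $\gamma_*>0$, completing the proof.

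The one genuinely delicate point—which I would flag as the main obstacle—is justifying that $(x^k,y^{k,j_k-1},x^{k,j_k-1})$ eventually lives in a single fixed compact subset of $\mathcal{O}$, so that a uniform Lipschitz modulus $\widehat{L}_{\!f}$ (rather than the iterate-dependent $L_k$ of Lemma~\ref{well-defined2}) can be used along $\mathcal{K}$. This hinges on $\|x^{k,j_k-1}-x^k\|\to0$ and $\|y^{k,j_k-1}-x^k\|\to0$, which themselves rely on $g$ being bounded below, on $\{x^k\}$ already being bounded, and on the scaling condition $\nu\varrho<1$ that makes $\widehat{\gamma}_k\widehat{\beta}_k\to0$; openness of $\mathcal{O}$ then places the extrapolated points $y^{k,j_k-1}$ in $\mathcal{O}$ for large $k$. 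Once this uniform compactness is secured, the remainder is a verbatim rerun of the arithmetic in the proof of Lemma~\ref{well-defined2}.
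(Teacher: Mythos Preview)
Your proposal is correct and follows essentially the same route as the paper's proof: the same contradiction setup with $\widehat{\gamma}_k=\varrho^{-1}\gamma_k$ and $\widehat{\beta}_k=\beta_{k,j_k-1}$, the same use of the failed acceptance test combined with the optimality inequality \eqref{optim-xkj} to force $\|x^{k,j_k-1}-y^{k,j_k-1}\|\to0$, the same passage to a compact $\Gamma\subset\mathcal{O}$ yielding a uniform Lipschitz constant $\widehat{L}_{\!f}$, and the same final inequality contradicting $\alpha\in(0,1)$ and $\widehat{\gamma}_k\widehat{\beta}_k\to0$. Your explicit remark that $j_k\ge1$ eventually (so that $j_k-1$ makes sense) and your identification of the compactness step as the delicate point are both accurate and make the argument slightly more careful than the paper's exposition, but the approach is the same.
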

\begin{proof}
 It suffices to prove that $\{\gamma_k\}_{k\in\mathbb{N}}$ is bounded. Suppose on the contrary that $\{\gamma_k\}_{k\in\mathbb{N}}$ is unbounded. Then, there exists $\mathcal{K}\subset\mathbb{N}$ such that $\lim_{\mathcal{K}\ni k\to\infty}\gamma_k=\infty$. For each $k\in\mathcal{K}$, write $\widehat{\gamma}_k\!:=\varrho^{-1}\gamma_k=\gamma_{k,j_k-1}$ and $\widehat{\beta}_k\!:=\nu^{-1}\beta_k=\beta_{k,j_k-1}$. Apparently, $\lim_{\mathcal{K}\ni k\to\infty}\widehat{\gamma}_{k}=\infty$ and $\lim_{\mathcal{K}\ni k\to\infty}\widehat{\beta}_{k}=0=\lim_{\mathcal{K}\ni k\to\infty}\widehat{\gamma}_k\widehat{\beta}_{k}$. From step 2 of Algorithm \ref{PGenls}, for each $k\in\mathcal{K}$,
 \begin{equation}\label{F-ineq41}
 F_{\delta}(z^{k,j_k-1})>F_{\delta}(z^{\ell(k)})-\frac{\alpha\widehat{\gamma}_{k}}{2}\|x^{k,j_{k}-1}-x^k\|^2-\frac{\alpha\delta}{2}\|x^{k}-x^{k-1}\|^2. 
 \end{equation}
 Note that \eqref{optim-xkj} holds for each $k\in\mathcal{K}$ and $j=j_k\!-\!1$. Along with $y^{k,j_k-1}\!=x^k+\widehat{\beta}_k(x^k-x^{k-1})$, 
 \begin{align}\label{optim-xkj1}
  &\quad\langle\nabla\!f(y^{k,j_k-1}),x^{k,j_k-1}-y^{k,j_k-1}\rangle+\frac{\widehat{\gamma}_k}{2}\|x^{k,j_k-1}-y^{k,j_k-1}\|^2+g(x^{k,j_k-1})\nonumber\\
  &\le \langle\nabla\!f(y^{k,j_k-1}),x^{k}-y^{k,j_k-1}\rangle+\frac{\widehat{\gamma}_k\widehat{\beta}_k^2}{2}\|x^{k}-x^{k-1}\|^2+g(x^{k})\quad\forall k\in\mathcal{K}.
 \end{align}
 Since $\{x^k\}_{k\in\mathbb{N}}$ is bounded and $g$ is bounded from below, using $\lim_{\mathcal{K}\ni k\to\infty}\widehat{\gamma}_k=\infty$ and $\lim_{\mathcal{K}\ni k\to\infty}\widehat{\gamma}_k\widehat{\beta}_{k}=0$, we deduce from \eqref{optim-xkj1} that $\lim_{\mathcal{K}\ni k\to\infty}\|y^{k,j_k-1}-x^{k,j_k-1}\|=0$. Note that $\{y^{k,j_k-1}\}_{k\in\mathcal{K}}$ is bounded due to $y^{k,j_k-1}\!=x^k+\widehat{\beta}_k(x^k-x^{k-1})$, so is the sequence $\{x^{k,j_k-1}\}_{k\in\mathcal{K}}$. Together with $\{x^k\}_{k\in\mathcal{K}}\cup\{x^{k,j_k-1}\}_{k\in\mathcal{K}}\subset{\rm dom}\,g\subset\mathcal{O}$, there exists a compact set $\Gamma\subset\mathcal{O}$ such that $y^{k,j_k-1}\in\Gamma$, $x^k\in\Gamma$ and $x^{k,j_k-1}\in\Gamma$ for sufficiently large $k\in\mathcal{K}$. Recall that $\nabla\!f$ is strictly continuous on $\mathcal{O}$, so is Lipschitz continuous on $\Gamma$. Denote by $\widehat{L}_{\!f}$ the Lipschitz constant of $\nabla\!f$ on $\Gamma$. Then, there exist $\widehat{k}\in\mathbb{N}$ such that for all $\mathcal{K}\ni k\ge\widehat{k}$,
 \begin{align*}
 &\|\nabla f(x^k)-\nabla f(y^{k,j_k-1})\|\le \widehat{L}_{\!f}\|x^k-y^{k,j_k-1}\|,\\
 &f(x^{k,j_k-1})-f(x^{k})-\langle\nabla\!f(x^{k}),x^{k,j_k-1}-x^{k}\rangle\le ({\widehat{L}_{\!f}}/{2})\|x^{k,j_k-1}-x^{k}\|^2.
 \end{align*}
 Now following the same argument as in the proof of Lemma \ref{well-defined2} leads to
 \begin{align*}
 \frac{(1\!-\!\alpha)\widehat{\gamma}_k\!-\!\widehat{L}_{\!f}\!-\!\delta\!-\!1}{2}\|x^{k,j_k-1}\!-\!x^k\|^2
 +\big[\frac{(1-\alpha)\delta}{2}-(L_k\!+\!\widehat{\gamma}_k)^2\widehat{\beta}_{k}^2\big]\|x^k-x^{k-1}\|^2< 0.
 \end{align*}
 Recall that $\lim_{\mathcal{K}\ni k\to\infty}\widehat{\gamma}_{k}=\infty$ and $\lim_{\mathcal{K}\ni k\to\infty}\widehat{\beta}_{k}=0=\lim_{\mathcal{K}\ni k\to\infty}\widehat{\gamma}_k\widehat{\beta}_{k}$. When $k\in\mathcal{K}$ is sufficiently large, it is impossible for the above inequality to hold. The conclusion follows.  
\end{proof}

Now we are ready to establish the full convergence of the sequence $\{x^k\}_{k\in\mathbb{N}}$ by proving that $\{x^k\}_{k\in\mathbb{N}}$ and $\{z^k\}_{k\in\mathbb{N}}$ together conform to H1-H4 with $\Phi=\Theta=F_{\delta}$.  
 \begin{theorem}\label{theorem42}
 Suppose that $\mathcal{L}_{F_{\delta}}(z^0)$ is bounded, and that $F_{\delta}$ is a KL function. Then, the sequence $\{z^k\}_{k\in\mathbb{N}}$ of Algorithm \ref{PGenls} is convergent with limit, say, $(x^*,x^*)$, and $x^*$ is a stationary point of \eqref{DC-prob} with $h\equiv 0$. If $F$ is a KL function of exponent $\theta\in[1/2,1)$, there exist $\gamma>0$ and $\varrho\in(0,1)$ such that \eqref{aim-ineq-rate} holds for all $k$ large enough. 
 \end{theorem}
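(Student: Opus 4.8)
The plan is to put Algorithm~\ref{PGenls} into the abstract framework of Section~\ref{sec3} by treating the augmented sequence $\{z^k\}_{k\in\mathbb{N}}$ itself as the ``iterate sequence'' in $\mathbb{X}\times\mathbb{X}$ which also serves as its own (trivially) augmented sequence, with $\mathcal{P}_{\mathbb{X}}$ the identity and $\Phi=\Theta=F_{\delta}$; then Theorems~\ref{KL-converge} and \ref{KL-rate} do the rest. (For PGnls, i.e.\ $\delta=0$ and $\beta_{\rm max}=0$, the same reduction works more simply with $\Phi=\Theta=F$ on $\mathbb{X}$ and $z^k=x^k$.) Boundedness of $\{z^k\}$ and $\{\gamma_k\}$, hence Lipschitz continuity of $\nabla f$ with some modulus $L_f$ on a compact set containing the whole trajectory, is already supplied by Lemma~\ref{PGenls-gamkbound}. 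Since $z^{k+1}=(x^{k+1},x^k)$ and $z^k=(x^k,x^{k-1})$, we have $\|z^{k+1}-z^k\|^2=\|x^{k+1}-x^k\|^2+\|x^k-x^{k-1}\|^2$, so the inner-loop acceptance test of Algorithm~\ref{PGenls} together with $\gamma_k\ge\gamma_{\rm min}$ yields H1 with $a=\tfrac{\alpha}{2}\min\{\gamma_{\rm min},\delta\}$ (taking $\delta>0$ here; the $\delta=0$ case is absorbed into PGnls as above). For H3 I would take $\varsigma_k\equiv0$ in \eqref{Phi-Theta} (so it reduces to H1), while \eqref{dist-Theta} comes from the optimality condition of $x^k$ in \eqref{subprobk-PG}: writing $y^k$ for the extrapolation point at step $k-1$ (so $y^k-x^{k-1}=\beta_{k-1}(x^{k-1}-x^{k-2})$ with $\beta_{k-1}\le1$), the pair $\big(\nabla f(x^k)-\nabla f(y^k)+\gamma_{k-1}(y^k-x^k)+\delta(x^k-x^{k-1}),\,\delta(x^{k-1}-x^k)\big)$ lies in $\partial F_{\delta}(z^k)$, and Lipschitzness of $\nabla f$ bounds its norm by a constant times $\|x^k-x^{k-1}\|+\|x^{k-1}-x^{k-2}\|\le\sqrt2\,\|z^k-z^{k-1}\|$.

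For H4, I would invoke Lemma~\ref{sequence-cond} after establishing $F_{\delta}(z^{k+1})-F_{\delta}(z^k)\le\tfrac{\overline b}{2}\|z^{k+1}-z^k\|^2$: combine the descent lemma for $f$ with the optimality of $x^{k+1}$ in \eqref{subprobk-PG} (competitor $x=x^k$), use $x^k-y^{k+1}=-\beta_k(x^k-x^{k-1})$ to replace $\nabla f(y^{k+1})$ by $\nabla f(x^k)$ up to an $O(\|x^k-x^{k-1}\|\,\|x^{k+1}-x^k\|)$ error, and discharge the cross terms via $\|u\|\|v\|\le\tfrac12(\|u\|^2+\|v\|^2)$ together with $\gamma_k\le\gamma_*,\beta_k\le\beta_{\rm max}$; this gives $F(x^{k+1})-F(x^k)\le C_1\|x^{k+1}-x^k\|^2+C_2\|x^k-x^{k-1}\|^2$, and adding $\tfrac{\delta}{2}\|x^{k+1}-x^k\|^2-\tfrac{\delta}{2}\|x^k-x^{k-1}\|^2$ (the last term being favourable) yields the required bound with $\overline b=2C_1+\delta$. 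The only genuinely delicate condition is H2, because $g$ is merely lsc. Here I would mimic Step~1 of the proof of Theorem~\ref{theorem41}: if $z^{\ell(k_q)}\to z^*$, Lemma~\ref{lemma1-Phi}(ii) applied in the product space forces $\|x^{\ell(k_q)}-x^{\ell(k_q)-1}\|\to0$, whence $z^*=(x^*,x^*)$; then plugging the competitor $x=x^*$ into the optimality inequality for $x^{\ell(k_q)}$ in \eqref{subprobk-PG} and letting $q\to\infty$ (using boundedness of $\{\gamma_k\}$, continuity of $\nabla f$, and $y^{\ell(k_q)}\to x^*$) produces $\limsup_q g(x^{\ell(k_q)})\le g(x^*)$, hence $\limsup_q F_{\delta}(z^{\ell(k_q)})\le F(x^*)=F_{\delta}(z^*)$.

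With H1--H4 and boundedness of $\{z^k\}$, Theorem~\ref{KL-converge} gives $\sum_k\|z^{k+1}-z^k\|<\infty$ and $z^k\to z^*$; since the second coordinate of $z^k$ is $x^{k-1}$, the two coordinates of $z^*$ coincide, so $z^*=(x^*,x^*)$. To see that $x^*$ is stationary, first re-run the competitor argument with $x=x^*$ along the whole sequence to obtain $g(x^k)\to g(x^*)$, then pass to the limit in $0\in\nabla f(y^k)+\gamma_{k-1}(x^k-y^k)+\partial g(x^k)$, using $y^k\to x^*$, $\gamma_{k-1}(x^k-y^k)\to0$, and the outer semicontinuity of $\partial g$ under $g$-attentive convergence, to get $0\in\nabla f(x^*)+\partial g(x^*)$. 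Finally, for the rate, if $F$ is a KL function of exponent $\theta\in[1/2,1)$ then, since the coupling term in $F_{\delta}(x,u)=F(x)+\tfrac{\delta}{2}\|x-u\|^2$ is a squared norm, a standard calculus of KL exponents makes $F_{\delta}$ a KL function of exponent $\max\{\theta,1/2\}=\theta$; applying Theorem~\ref{KL-rate} to $\{z^k\}$ with $\Theta=F_{\delta}$ bounds $\sum_{j\ge k}\|z^{j+1}-z^j\|$ by $\gamma\varrho^k$ for $\theta=1/2$ and by $\gamma k^{(1-\theta)/(1-2\theta)}$ for $\theta\in(1/2,1)$, and since $\|x^k-x^*\|\le\|z^k-z^*\|\le\sum_{j\ge k}\|z^{j+1}-z^j\|$ and $\|x^{j+1}-x^j\|\le\|z^{j+1}-z^j\|$, the bound \eqref{aim-ineq-rate} follows after absorbing constants into $\gamma$. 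The main obstacle is H2; all the other conditions are a careful but mechanical translation of the inner-loop inequality and the subproblem optimality conditions.
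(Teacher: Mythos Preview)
Your plan matches the paper's: treat $\{z^k\}$ as both the iterate and the augmented sequence in $\mathbb{X}\times\mathbb{X}$ with $\Phi=\Theta=F_\delta$, verify H1--H4, and invoke Theorems~\ref{KL-converge}--\ref{KL-rate}; the stationarity argument and the KL-exponent transfer from $F$ to $F_\delta$ (the paper cites \cite[Theorem~3.6]{LiPong18}) are also the same.

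The one substantive difference is how you justify the constant $L_f$ used in H3 and H4. You write that ``Lipschitz continuity of $\nabla f$ with some modulus $L_f$ on a compact set containing the whole trajectory is already supplied by Lemma~\ref{PGenls-gamkbound}'', but that lemma only yields boundedness of $\{z^k\}$ and $\{\gamma_k\}$. Under Assumption~\ref{ass1}(i), $\nabla f$ is merely strictly continuous on the open set $\mathcal{O}$, and the extrapolation points $y^k$ need not a priori lie in a compact subset of $\mathcal{O}$, so a uniform $L_f$ is not immediate (and the descent lemma you invoke for H4 also needs the segment $[x^k,x^{k+1}]$ inside such a set). The paper avoids this by proving both \eqref{dist-Theta} and the hypothesis of Lemma~\ref{sequence-cond} \emph{by contradiction}: assume the desired bound fails along $\mathcal{K}$ with constants $M_k\to\infty$, use H1--H2 and Proposition~\ref{prop1-Phi} to get $\|z^{k+1}-z^k\|\to0$, extract $\mathcal{K}_1\subset\mathcal{K}$ with $x^k,x^{k-1},y^k\to\widetilde x\in\mathrm{dom}\,g\subset\mathcal{O}$, and apply local Lipschitzness near $\widetilde x$ only. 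Your direct route is repairable along the same lines (first verify H1--H2, which need no $L_f$; then use Proposition~\ref{prop1-Phi} to conclude that the accumulation set of $\{x^k\}$, hence of $\{y^k\}$, is a compact subset of $\mathcal{O}$; only then extract a uniform $L_f$), but as written the ``hence'' is a gap. A minor slip: your $\overline b=2C_1+\delta$ forgets the $(C_2-\tfrac{\delta}{2})\|x^k-x^{k-1}\|^2$ contribution; take $\overline b=2\max\{C_1+\tfrac{\delta}{2},\,C_2\}$ or simply $2(C_1+C_2)+\delta$.
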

 \begin{proof}
 By \cite[Theorem 3.6]{LiPong18}, if $F$ is a KL function of exponent $\theta\in[1/2,1)$, so is $F_{\delta}$. In view of Theorems \ref{KL-converge} and \ref{KL-rate}, it suffices to prove that $\{x^k\}_{k\in\mathbb{N}}$ and $\{z^k\}_{k\in\mathbb{N}}$ conform to H1-H4 with $\Phi=\Theta=F_{\delta}$. From the iteration steps of Algorithm \ref{PGenls}, $\{z^k\}_{k\in\mathbb{N}}$ satisfies H1 with $a=({\alpha}/{2})\min\{\gamma_{\min},\delta\}$. We complete the proof by the following three steps. 
 
 \noindent
 {\bf Step 1: to prove that $\{z^k\}_{k\in\mathbb{N}}$ satisfies H2 with $\Phi=F_{\delta}$.} Since $\{z^k\}_{k\in\mathbb{N}}$ is bounded, using Lemma \ref{lemma1-Phi} (ii) leads to $\lim_{k\to\infty}\|z^{\ell(k)}-z^{\ell(k)-1}\|=0$. To prove that it satisfies condition H2, pick any convergent subsequence $\{z^{\ell(k_j)}\}_{j\in\mathbb{N}}$ and let $(x^*,x^*)=z^*=\lim_{j\to\infty}z^{\ell(k_j)}$. Clearly, $\lim_{j\to\infty}z^{\ell(k_j)-1}=z^*$ and $\lim_{j\to\infty}x^{\ell(k_j)-1}=x^*=\lim_{j\to\infty}x^{\ell(k_j)-2}$. Note that $\|y^{\ell(k_j)}-x^{\ell(k_j)-1}\|\le\beta_{\rm max}\|x^{\ell(k_j)-1}-x^{\ell(k_j)-2}\|$ for each $j\in\mathbb{N}$. We also have $\lim_{j\to\infty}y^{\ell(k_j)}=x^*$. 
 For each $k\in\mathbb{N}$ and $j\in\mathbb{N}$, from the optimality condition of $x^{\ell(k_j)}$, it follows
 \begin{align}\label{optim2-xkj}
  &\quad\langle\nabla\!f(y^{\ell(k_j)}),x^{\ell(k_j)}-y^{\ell(k_j)}\rangle+\frac{\gamma_{\ell(k_j)-1}}{2}\|x^{\ell(k_j)}-y^{\ell(k_j)}\|^2+g(x^{\ell(k_j)})\nonumber\\
  &\le \langle\nabla\!f(y^{\ell(k_j)}),x^*-y^{\ell(k_j)}\rangle+\frac{\gamma_{\ell(k_j)-1}}{2}\|x^*-y^{\ell(k_j)}\|^2+g(x^*).
 \end{align}
 From $x^*\!=\!\lim_{j\to\infty}y^{\ell(k_j)}\!=\!\lim_{j\to\infty}x^{\ell(k_j)}$ and Lemma \ref{PGenls-gamkbound}, we deduce $\limsup_{j\to\infty}g(x^{\ell(k_j)})\le g(x^*)$. Then,
 $\limsup_{j\to\infty}F_{\delta}(z^{\ell(k_j)})\le\! F_{\delta}(z^*)=F_{\delta}(\lim_{j\to\infty}z^{\ell(k_j)})$, and H2 holds for $\Phi=F_{\delta}$.
 
 \noindent
 {\bf Step 2: to prove that $\{z^k\}_{k\in\mathbb{N}}$ satisfies H3 with $\Theta=F_{\delta}$.} We only need to prove \eqref{dist-Theta} by contradiction. Suppose that there exists an infinite index set $\mathcal{K}\subset\mathbb{N}$ such that 
 \begin{equation}\label{aim-dist1}
 {\rm dist}(0,\partial F_{\delta}(z^k))>M_k\|z^k-z^{k-1}\|\quad{\rm for\ all}\ k\in\mathcal{K}
 \end{equation}
 with $\lim_{\mathcal{K}\ni k\to\infty}M_k=\infty$. 
 For each $k\in\mathbb{N}$, the optimality of $x^k$ implies that for each $k\in\mathbb{N}$,
 \begin{equation}\label{temp-xkyk}
  \begin{pmatrix}
   \nabla f(x^k)-\nabla\!f(y^{k})-\gamma_{k-1}(x^{k}-y^{k})+\delta(x^k-x^{k-1})\\
    \delta(x^{k-1} -x^k)
    \end{pmatrix}\in\partial F_{\delta}(z^k).
  \end{equation}
 Since $\{z^{k}\}_{k\in\mathbb{N}}\subset\mathcal{O}$ is bounded, using Proposition \ref{prop1-Phi} leads to $\lim_{k\to\infty}\|z^{k+1}-z^k\|=0$. Note that $y^{k}=x^{k-1}+\beta_{k-1}(x^{k-1}\!-x^{k-2})$ and $\beta_{k-1}\le\beta_{\rm max}\le1$. There exist $\mathcal{K}_1\subset\mathcal{K}$ and $\widetilde{x}\in{\rm dom}g$ such that $\widetilde{x}=\lim_{\mathcal{K}_1\ni k\to\infty}y^k=\lim_{\mathcal{K}_1\ni k\to\infty}x^{k-1}=\lim_{\mathcal{K}_1\ni k\to\infty}x^{k}$. Note that $\nabla\!f$ is strictly continuous at $\widetilde{x}$. There exists $\widehat{L}_{\!f}>0$ such that $\|\nabla f(x^k)-\nabla f(y^k)\|\le \widehat{L}_{\!f}\|x^k-y^k\|$ for all sufficiently large $k\in\mathcal{K}_1$. Along with \eqref{temp-xkyk} and Lemma \ref{PGenls-gamkbound}, for all sufficiently large $k\in\mathcal{K}_1$, 
 \begin{align*}
 {\rm dist}(0,\partial F_{\delta}(z^k))
 &\le (\widehat{L}_{\!f}\!+\!\gamma_*\!+\!2\delta)\big[\|x^{k}-x^{k-1}\|+\|x^{k-1}\!-\!x^{k-2}\|\big]\\
 &\le \sqrt{2}(\widehat{L}_{\!f}\!+\!\gamma_*\!+\!2\delta)\|z^k\!-\!z^{k-1}\|,
 \end{align*}
 which is a contradiction to \eqref{aim-dist1}. This shows that inequality \eqref{dist-Theta} holds.
 
 \noindent
 {\bf Step 3: to prove that $\{z^k\}_{k\in\mathbb{N}}$ satisfies H4 with $\Phi=F_{\delta}$.} It suffices to prove that $\{F_{\delta}(z^k)\}_{k\in\mathbb{N}}$ satisfies the assumption of Lemma \ref{sequence-cond}. If not, there is an infinite $\mathcal{K}\subset\mathbb{N}$ such that 
 \begin{equation}\label{aim-Fdelta}
  F_{\delta}(z^{k+1})-F_{\delta}(z^{k})>M_k\|z^{k+1}-z^{k}\|^2\quad{\rm for\ all}\ k\in\mathcal{K}
 \end{equation}
 with $\lim_{\mathcal{K}\ni k\to\infty}M_k=\infty$. 
 From the proof in Step 2, there exist an infinite index set $\mathcal{K}_1\subset\mathcal{K}$ and $\widetilde{x}\in{\rm dom}g$ such that $\widetilde{x}=\lim_{\mathcal{K}_1\ni k\to\infty}y^{k+1}=\lim_{\mathcal{K}_1\ni k\to\infty}x^{k}=\lim_{\mathcal{K}_1\ni k\to\infty}x^{k+1}$.
 Since $\nabla\!f$ is Lipschitz continuous at $\widetilde{x}$, there exists $\widehat{L}_{\!f}>0$ such that for all sufficiently large $k\in\mathcal{K}_1$
 \begin{align*}
 &\|\nabla f(x^k)-\nabla f(y^{k+1})\|\le \widehat{L}_{\!f}\|x^k-y^{k+1}\|,\\
 &f(x^{k+1})-f(x^{k})-\langle\nabla\!f(x^{k}),x^{k+1}-x^{k}\rangle\le \frac{\widehat{L}_{\!f}}{2}\|x^{k+1}-x^{k}\|^2.
 \end{align*}
 Following the arguments in the proof of Lemma \ref{well-defined2}, for all sufficiently large $k\in\mathcal{K}_1$, 
 \begin{align*}
 F(x^{k+1})-F(x^k)
  &\le -\frac{\gamma_{k}\!-\!\widehat{L}_{\!f}-1}{2}\|x^{k+1}-x^k\|^2\!+\!\frac{(\gamma_{k}+\widehat{L}_{\!f})^2}{2}\|x^k-x^{k-1}\|^2
 \end{align*}
 Along with the expression of $F_{\delta}$ and Lemma \ref{PGenls-gamkbound}, for all sufficiently large $k\in\mathcal{K}_1$,
 \begin{align*}
 F_{\delta}(z^{k+1})-F_{\delta}(z^k)&\le\frac{\widehat{L}_{\!f}\!+\!\delta\!+\!1}{2}\|x^{k+1}-x^k\|^2\!+\!\frac{(\gamma_{*}+\widehat{L}_{\!f})^2}{2}\|x^k-x^{k-1}\|^2\\
  &\le \frac{\widehat{L}_{\!f}\!+\!\delta\!+\!1+(\gamma_{*}+\widehat{L}_{\!f})^2}{2}\|z^{k+1}-z^k\|^2,
 \end{align*}
 where the last inequality is due to $z^{k+1}=(x^{k+1},x^k)$ for all $k\in\mathbb{N}$. When $k\in\mathcal{K}_1$ is large enough, in view of \eqref{aim-Fdelta}, it is impossible for the above inequality to hold. This proves the claimed result. The desired conclusion follows the above three steps.  
 \end{proof}   

 Recall that Algorithm \ref{PGenls} with $\delta=0$ and $\beta_{\rm max}=0$ becomes the PGnls proposed in \cite{Kanzow22}. Theorem \ref{theorem42} also implies the full convergence of the iterate sequence of PGnls.  
 
 \section{Conclusion}\label{sec5}

It is expected that in order to extend the full convergence results of the
classic GLL-type nonmonotone methods to nonconvex and nonsmooth optimization,
new conditions need to be enforced.
The more restricted such conditions are, the less number of problem types would be covered. 
A recent example is condition (2.6) of \cite{QianPan23}, which is essentially satisfied by weakly convex functions. This is far from what encountered in modern optimization.
Therefore, it is challenging to propose suitable conditions that would cover a wide range of problems.

We believe this paper made a significant progress towards resolving this challenging task. We proposed a principled 
iterative framework H1-H4 applicable to the minimization of a prox-regular KL function $\Phi$, enhancing greatly the existing one \cite{QianPan23} that is known to be satisfied by weakly convex KL functions with a restricted weakly convex parameter. Any sequence $\{x^k\}_{k\in\mathbb{N}}$ and its bounded augmented sequence $\{z^k\}_{k\in\mathbb{N}}$ together complying with H1-H4 is proved to have a full convergence, and when $\Theta$ is a KL function of exponent $\theta\in(0, 1)$, the convergence admits a linear rate if $\theta\in(0, 1/2]$ and a sublinear rate if $\theta\in(1/2, 1)$. We demonstrated that the iterate sequences generated by
 Algorithms \ref{NPG_major} and \ref{PGenls} conform to H1-H4.
 Consequently, we achieved, for the first time, the full convergence of the iterate sequence of NPG$_{\rm major}$ and PGenls. 
 We expect that 
 the proposed novel framework will serve as a guide for developing new algorithms and provide a convenient tool for convergence analysis of existing algorithms for other classes of KL optimization problems.

\backmatter








\section*{Statements and Declarations}


\begin{itemize}
\item {\bf Funding} The second author's work was funded by the Natural Science Foundation of Guangdong Province under project 2023A1515111167. The third author's work was funded by the National Natural Science Foundation of China under project 12371299. 
Qian and Qi were funded by Hong Kong RGC General Research Fund PolyU/15309223 and PolyU AMA Project 230413007.
\item {\bf Competing interests} The authors declare that they have no conflict of interest.
\item {\bf Availability of data and materials} Not applicable.
\end{itemize}

\noindent










\end{document}